\documentclass[12pt]{amsart}
\usepackage{amssymb}
\usepackage{ulem}
\usepackage{amsmath,amscd}
\usepackage[utf8]{inputenc}
\usepackage{graphicx}
\usepackage{stmaryrd}
\usepackage{mathtools}
\usepackage{comment}
\usepackage{multicol}
\usepackage{xcolor}
\usepackage{tikz-cd}
\usepackage{tikz}

\newcommand{\ggadd}[1]{{\color{red}\marginpar{\color{red}[GG]}#1}}
\newcommand{\aadd}[1]{{\color{green}\marginpar{\color{green}[AŻ]}#1}}

\newcommand{\pcsadd}[1]{{\color{blue}\marginpar{\color{blue}[PCS]}#1}}

\newtheorem{theorem}{Theorem}[section]
\newtheorem{lemma}[theorem]{Lemma}
\newtheorem{example}[theorem]{Example}
\newtheorem{problem}[theorem]{Problem}
\newtheorem{corollary}[theorem]{Corollary}

\newtheorem{proposition}[theorem]{Proposition}

\theoremstyle{definition}
\newtheorem{definition}[theorem]{Definition}
\theoremstyle{remark}
\newtheorem{remark}[theorem]{Remark}

\numberwithin{equation}{section}

\newcommand{\Ind}{{\rm Ind}}
\newcommand{\inter}{{\rm i}}

\newcommand{\Image}{{\rm Im}}

\newcommand{\Coin}{{\rm Coin}}
\newcommand{\MC}{{\rm MC}}

\newcommand{\Z}{{\mathbb{Z}}}
\newcommand{\R}{{\mathbb{R}}}
\newcommand{\modulo}{{\textrm{mod}~}}

\newcommand{\GCD}{{\rm GCD}}
\newcommand{\Int}{{\rm Int}}

\newcommand{\pr}{{\rm pr}}
\begin{document}

\vspace{0.5in}

\vspace{0.5in}

\title[Nielsen coincidence theory] {Nielsen coincidence theory of $(n,m)$-valued pairs of maps}


\author{Grzegorz Graff}
\address{Faculty of Applied Physics and Mathematics, Gda\'nsk University of
Technology, Narutowicza 11/12, 80-233 Gda\'nsk, Poland,
Orcid: 0000-0001-5670-5729}

\email{grzegorz.graff@pg.edu.pl}

\author{P. Christopher Staecker}
\address{Mathematics Department, Fairfield University, 1073 North Benson Rd, Fairfield, 06823-519, CT, USA, Orcid: 0000-0002-4182-7122}
\email{cstaecker@fairfield.edu}

\author{Alan \.Zeromski}
\address{Doctoral School of Gda\'nsk University of Technology, Narutowicza 11/12, 80-233 Gda\'nsk, Poland,
Orcid: 0009-0001-3481-2897}
\email{alan.zeromski@pg.edu.pl}

\subjclass[MSC Classification]{55M20, 54C60.}
\keywords{Nielsen number, $n$-valued map, coincidence points, fixed
point index.}

\begin{abstract} 
We consider  pairs of maps 
$(f,g)$, where $f$ is an $n$-valued map and 
 $g$ is an $m$-valued map, defined on connected finite polyhedra. 
 A point $x$ such that $f(x)\cap g(x)\neq \emptyset$ is called a coincidence point of $f$ and $g$.
 
A useful device for studying coincidence points would be a Nielsen-type invariant which provides  a lower bound for the number of coincidence points of all $(n, m)$-valued pairs of maps homotopic to $(f,g)$. The construction of such an invariant $N(f:g)$ was proposed in [J. Fixed Point Theory Appl. 14, 309--324 (2013)]. Unfortunately, this approach has some flaws. 
In this paper, we present a modified
construction that yields a corrected form of the invariant, defined in terms of the intersection points of the graphs of $f$ and $g$.
In the case of $(n, m)$-valued pairs of maps of the circle our invariant provides a sharp lower
bound, which we precisely determine.
\end{abstract}

\maketitle

\section{Introduction}

Given a space $X$ and a function $f \colon X\to X$, a point $x\in X$ is called a \textit{fixed point} of $f$ if $x= f(x)$. 
Nielsen fixed point theory aims to measure the cardinality of the set of fixed points in a way that is invariant under homotopy of  $f$. Specifically, the Nielsen number $N(f)$ is a homotopy invariant and a lower bound  for the number of fixed points in the homotopy class of $f$ (cf. \cite{Jiang} for the standard reference). 

We will consider two well-studied generalizations of Nielsen fixed point theory: $n$-valued maps, and coincidence theory.

Given spaces $X$ and $Y$ and $n \in \mathbb{N}$, a multifunction
$f \colon X \multimap Y$ is called \textit{$n$-valued} if, for every $x \in X$,
the image $f(x) \subset Y$ is an unordered set of cardinality exactly $n$.
When $Y = X$ and $f$ is continuous (as defined in more detail below), we call
$f$ an \textit{$n$-valued self-map}, and a point $x \in X$ is called a
\textit{fixed point} of $f$ if $x \in f(x)$.

Topological fixed point theory of $n$-valued maps was first studied by Schirmer in \cite{Schirmer1}, with the goal of generalizing classical constructions from the single-valued case. Schirmer defined a fixed point index for $n$-valued maps, and proved a Nielsen fixed point theorem. 
The subject was developed further in several papers by R.~F.~Brown, beginning with \cite{Brown3}. 
In particular, it is shown there that $n$-valued maps of the circle can be classified up to homotopy by their ``degree'', and a convenient formula for the Nielsen number is obtained: if $f$ is an $n$-valued self-map of the circle of degree $a$, then
$N(f)=|a-n|.$
This generalizes the classical single-valued formula $N(f)=|a-1|$; see, for instance, \cite{Jez-Mar}. The theory of $n$-valued maps remains an active area of research and continues to attract attention, with new results and further developments  appearing in the literature (cf. \cite{dekimpe1, dekimpe2}).

Single-valued fixed point theory has also been generalized to \textit{coincidence theory} (cf. the survey paper \cite{Goncalves}, also for non-orientable manifolds see \cite{Jezierski}). For a pair of maps $f,g \colon X\to Y$, a point $x\in X$ is called a \textit{coincidence point} of the pair $(f,g)$ if $f(x)=g(x)$. Again the basics of this theory were laid out by Schirmer, who defined a coincidence point index and proved a Nielsen coincidence theorem in her dissertation \cite{Schirmer0}. The standard examples considered within this theory involve maps of circles, for which the following statement holds: if $f,g \colon S^1\to S^1$ are two maps of degrees $a$ and $b$, then $N(f,g) = |a-b|$. 

This paper aims to unify the two generalizations outlined above (mainly for $S^1$ but some facts are valid in more general settings). Let $f,g \colon S^1\multimap S^1$ be a pair of multimaps of the circle, and define the coincidence set of the pair $(f,g)$ as $\Coin(f:g) = \{x;~~ f(x) \cap g(x)\neq \emptyset \}$. 

One of our goals is to define the counterpart of the Nielsen number for the pair $(f,g)$, and show that it satisfies a formula which generalizes both the $n$-valued fixed point formula, and the single-valued coincidence formula. 

Brown \& Kolahi considered exactly the same situation in \cite{Brown1}, which is the main reference for this paper. The main result of \cite{Brown1}, Theorem 5.1, claimed the following formula. Let  $f$ be an $n$-valued map of degree $a$, and $g$ be an $m$-valued map of degree $b$ then the Nielsen coincidence number of the pair $(f,g)$ is given by:
\begin{equation}\label{browneq}
\frac{|am-bn|}{\GCD(n,m)}. 
\end{equation}
Unfortunately there are several oversights and  errors in that paper which we will correct by modifying the construction proposed by Brown \& Kolahi.

\begin{figure}[h!]
\[ \includegraphics[width=2.8in]{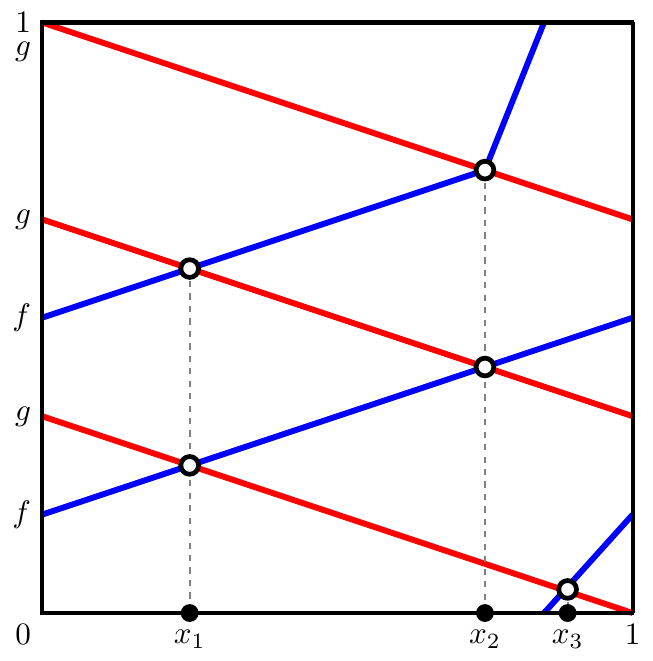} \]
%
%
%
%
%
%
%
%
\label{introfig}
\caption{Two multimaps of the circle, having 5 graph intersection points (circled), but only 3 domain coincidence points.}
\end{figure}

To briefly outline the problems in \cite{Brown1}, consider Figure \ref{introfig}, which shows two multimaps of the circle, $f$ and $g$. (Throughout the paper, we will represent the circle pictorially as the interval $[0,1]$ with endpoints identified.) In Figure \ref{introfig}, the map $f$ is $2$-valued with degree 1, and $g$ is $3$-valued with degree $-1$.  

In Figure~\ref{introfig} we see that $f$ and $g$ have three coincidence points, namely $x_1,x_2,x_3$.
However, the formula~\eqref{browneq} yields $N(f,g)=\frac{|1\cdot 3-(-1)\cdot 2|}{1}=5$, so $N(f,g)$ exceeds the actual number of coincidence points. This is a counterexample to Theorem 2.1 of \cite{Brown1}, which claims that $N(f:g)$ is a lower bound for the number of coincidence points.

In this paper we will show that many of the results of \cite{Brown1} are correct if they are re-cast in terms of the set of \textit{graph intersection points} of the pair $(f,g)$, which are elements of $X\times Y$, rather than the set of coincidence points, which are elements of $X$. 
For example in Figure \ref{introfig}, the graph intersection points of $f$ and $g$ are the circled points of $S^1\times S^1$. There are 5 such points, which agrees with the expected count from \eqref{browneq}.

The contribution of the present paper is threefold. First, we show that the Nielsen-type construction proposed in~\cite{Brown1} cannot be used directly on the set of domain coincidence points, because in that setting the resulting relation does not correctly detect the homotopy behavior of coincidences. Second, we replace domain coincidence points by graph intersection points in $X \times Y$, which leads to a corrected equivalence relation and a well-defined Nielsen-type invariant. Third, for $(n,m)$-valued self-maps of the circle we reinterpret graphs as loops in the torus and use intersection-theoretic methods to compute the corresponding (algebraic) Nielsen number and the minimum number of graph intersection points in a homotopy class. The structure of this paper is as follows. In Section \ref{sec:nv} we recall basic facts related to multimaps, especially in context of self-multimaps of the circle. Section \ref{sec:power} is devoted to the study of $(n,m)$-valued pairs of power maps on the circle, which are the model case for our investigations. 


Next, in Section~4 we explain why the approach of~\cite{Brown1}, formulated in terms of domain coincidence points, is not adequate in our setting, and we replace it by a relation defined on graph intersection classes. This leads to the geometric Nielsen coincidence number $\hat N(f\colon g)$, which is a homotopy invariant and provides a lower bound for the minimal number of graph intersection points in the homotopy class of $(f,g)$. Although this gives the correct geometric framework for our problem, determining $\hat N(f:g)$ explicitly remains difficult, since it requires deciding which graph intersection classes are geometrically essential. This difficulty motivates the algebraic approach developed in the later sections.

In Section \ref{sec:intersection} we show the equivalence of multimaps of $S^1$ with torus loops. In this interpretation the coincidence points of multimaps become intersection points of images of the appropriate loops. This approach enables us to  apply the intersection number in our investigations and it seems that it may be of independent interest.

Finally, in Sections~6 and~7, we pass to the algebraic Nielsen coincidence number $N(f:g)$ and prove the main result of the paper (Theorem~7.1) by means of intersection theory. This algebraic viewpoint is closest to the original aim of Brown and Kolahi in~\cite{Brown1}, namely, to obtain a Nielsen-type invariant detecting the minimal coincidence behavior in a homotopy class. In our corrected setting, formulated in terms of graph intersection classes, we show that for an $(n,m)$-valued pair of self-maps of $S^1$ of degrees $a$ and $b$, the minimal number of graph intersection points is equal to $|am-bn|$.


\section{$n$-valued maps} \label{sec:nv}
In this section we give basic definitions and review some fundamental facts related to the class of $n$-valued maps.

Given sets $X, Y$ and $n \in \mathbb{N}$, a multifunction $f \colon X \multimap Y$ is \textit{$n$-valued}, if for every $x \in X$ the image $f(x)$ is an unordered set of cardinality $n$. 
An $n$-valued function $f \colon X \multimap Y$ is called \textit{upper semicontinuous} if, for each open set $U \subseteq Y$, the set $\{ x \in X;~~ f(x) \subseteq U \}$ is open in $X$ and it is \textit{lower semicontinuous} if $\{ x \in X;~~ f(x) \cap U  \neq \emptyset \}$ is open in $X$. An $n$-valued function $f \colon X \multimap Y$ is \textit{continuous} if it is both upper and lower semicontinuous. A continuous $n$-valued function is called an \textit{$n$-valued map.} For $n$-valued functions, continuity will be guaranteed even if the function is assumed only to be lower semi-continuous
 (cf. \cite{Brown2}).

\begin{definition}\label{def:split}
An $n$-valued map $f \colon X \multimap Y$ is \textit{split} if there are $n$ single valued continuous functions $f_1,\dots,f_n \colon X\to Y$ with $f(x) = \{ f_1(x),\dots,f_n(x) \}$ for every $x\in X$. In this case we write $f= \{f_1,\dots, f_n\}$.
\end{definition}
 Not all $n$-valued maps are split, but all $n$-valued maps with simply connected domain are split.

\begin{lemma} \cite{Schirmer1}
If $X,Y$ are finite polyhedra and $X$ is simply connected, then any $n$-valued map $f \colon X \multimap Y$ is split.
\end{lemma}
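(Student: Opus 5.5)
The plan is to view an $n$-valued map as a single-valued map into the unordered configuration space, and then lift it through a covering. Let $D_n(Y)=\{S\subset Y;~ |S|=n\}$ carry the topology induced from $Y^n$ by the quotient map from the ordered configuration space $F_n(Y)=\{(y_1,\dots,y_n)\in Y^n;~ y_i\neq y_j \text{ for } i\neq j\}$ modulo the action of the symmetric group $\Sigma_n$.

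\textbf{Step 1: $f$ is continuous into $D_n(Y)$.} First check that an $n$-valued map $f\colon X\multimap Y$, continuous in the upper/lower semicontinuous sense, is the same as a continuous single-valued map $\tilde f\colon X\to D_n(Y)$. For the direction used here, fix $x_0$ with $f(x_0)=\{y_1,\dots,y_n\}$. Choose pairwise disjoint open neighbourhoods $U_i\ni y_i$; this is possible since $Y$ is Hausdorff. Upper semicontinuity applied to $U=\bigcup U_i$, together with lower semicontinuity applied to each $U_i$, gives a neighbourhood $V$ of $x_0$ on which $f(x)\subset\bigcup U_i$ and $f(x)\cap U_i\neq\emptyset$ for every $i$. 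Since $|f(x)|=n$, each $U_i$ then contains exactly one point of $f(x)$. These sets $\{S;~ |S\cap U_i|=1 \text{ for all } i\}$ form a neighbourhood base of $D_n(Y)$, which gives continuity of $\tilde f$.

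\textbf{Step 2: the quotient is a covering map.} The quotient $q\colon F_n(Y)\to D_n(Y)$ is a finite covering map, because $\Sigma_n$ acts freely. Properness of the action is automatic for a finite group acting freely on a Hausdorff space: the sets $U_1\times\dots\times U_n$ with the $U_i$ pairwise disjoint are evenly covered, their $n!$ sheets being the permuted products.

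\textbf{Step 3: lift.} A finite polyhedron $X$ is path connected (we may work componentwise, and the paper assumes connectedness) and locally path connected. Since $\pi_1(X)=0$, the lifting criterion gives a continuous $F\colon X\to F_n(Y)$ with $q\circ F=\tilde f$. Writing $F=(f_1,\dots,f_n)$, the coordinates are continuous maps $X\to Y$ satisfying $f(x)=\{f_1(x),\dots,f_n(x)\}$. This is exactly the splitting of Definition~\ref{def:split}.

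The main obstacle is Step 1, identifying the multimap topology with the configuration-space topology, together with the local triviality in Step 2. Both rest on choosing disjoint neighbourhoods of the $n$ image points and using that the cardinality is exactly $n$. Once these are in place, the lifting criterion does the rest. An alternative I would fall back on is to lift along paths directly: build local single-valued branches on $V$ as in Step 1, continue them along paths, and use simple connectivity (homotopy invariance of the continuation) to show the result is well defined.
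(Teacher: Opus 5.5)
Your proof is correct. Step~1 actually uses only lower semicontinuity, which is consistent with the paper's remark that this suffices. Step~2 exhibits explicit evenly covered neighbourhoods. The lifting criterion in Step~3 needs nothing of $D_n(Y)$ beyond $q$ being a covering. So the whole argument only uses that $Y$ is Hausdorff and that $X$ is path connected, locally path connected, with $\pi_1(X)=0$.

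The paper gives no proof of its own here; it only cites Schirmer. The natural comparison is therefore with the other standard route, which is also the viewpoint the paper adopts later: in Section~6 it treats $\pr_1|_{\Gamma(f)}$ as a covering map. Your disjoint-neighbourhood argument shows that $\pr_1\colon\Gamma(f)\to X$ is an $n$-sheeted covering. A covering of a simply connected, locally path connected space is trivial, so $\Gamma(f)\cong X\times\{1,\dots,n\}$, and the $n$ sheets are the graphs of $f_1,\dots,f_n$.

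Both proofs rest on the same local input: disjoint neighbourhoods of the $n$ image points together with the exact cardinality $n$. They differ only in which covering is lifted through. Yours uses a single covering $F_n(Y)\to D_n(Y)$ that does not depend on $f$. This turns splitting into a pure lifting problem, and so also characterises splittability for arbitrary $X$ via the lifting criterion. The graph version uses a covering built from $f$, but it describes $\Gamma(f)$ directly even when $X$ is not simply connected. For $X=S^1$ it exhibits $\Gamma(f)$ as a disjoint union of circles covering $S^1$, which is exactly the structure behind the graph splittings and the loops $l_f$ in Section~5.
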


The function $f \colon X \multimap Y$ is \textit{locally split} if, given $x_0 \in X$ there is a neighborhood $U$ of $x_0$ such that $f \vert_U \colon U \multimap Y$ is split. By the theorem above, maps on polyhedra are always locally split.

An \textit{$n$-valued homotopy} between $n$-valued maps $f,g \colon X \multimap Y$ is an $n$-valued map $H \colon X \times [0,1] \multimap Y$ satisfying $H(x,0) = f(x)$ and $H(x,1) = g(x)$ for every $x \in X$. In this case we say that $f$ and $g$ are \textit{homotopic}, and we write $f\simeq g$. 

If $f \colon X \multimap Y$ is an $n$-valued map and $g \colon X \multimap Y$ is an $m$-valued map, then we call $(f,g)$ an \textit{$(n,m)$-valued pair of maps}. 

\begin{definition}\label{def:hom}
Two $(n,m)$-valued pairs of maps $(f,g)$ and $(f',g')$ are homotopic if there is an $n$-valued homotopy $\Phi \colon X \times [0,1] \multimap Y$ and an $m$-valued homotopy $\Psi \colon X \times [0,1] \multimap Y$ such that $\Phi_0 = f, \Phi_1 = f', \Psi_0 = g$ and $\Psi_1 = g'$. The pair $(\Phi,\Psi)$ is called an $(n,m)$-valued pair of homotopies.
\end{definition}

\subsection{The degree of $n$-valued self-maps of $S^1$}
In this paper we will concentrate mainly on $n$-valued self-maps of $S^1$. Below we will describe the idea of the \textit{degree} of $n$-valued maps of the circle defined in \cite{Brown3}.

Let $r \colon \mathbb{R} \to S^1 $ be given by $r(t) = e^{2\pi t i}$. We will denote points of the circle by $r(t)$ for $0 \leqslant t < 1$. Let $f \colon S^1 \multimap S^1$ be an $n$-valued map, then the $n$-valued function $f \circ r \colon [0,1] \multimap S^1$ is continuous and split and we write $f \circ r = \{f_0, f_1, . . . , f_{n-1}\}$ where the maps $f_j \colon [0,1] \to S^1$ have the property: $f_j (0) = r(t_j)$ for $0 \leqslant t_0 < t_1 < \ldots < t_{n-1} < 1$. Let $\Tilde{f_j} \colon [0,1] \to \R$ be the lift of $f_j$ such that $\Tilde{f_j}(0) = t_j$. We note that if $0 \leqslant j < k \leqslant n - 1$, then $\Tilde{f_j}(t) < \Tilde{f_k}(t)$ for all $t \in [0,1]$ because
$f_j(r(t)) \neq f_k(r(t))$. Since $f$ is well-defined, the sets $(f \circ r)(0)$ and $(f \circ r)(1)$ are the same. Consequently, $\Tilde{f_0}(1) = v + t_J$ for some $v, J \in \mathbb{Z}$, where $0  \leqslant J \leqslant n - 1$. The degree of the $n$-valued map $f \colon S^1 \multimap S^1$ is defined as: 
\begin{equation}\label{deg}
    \deg(f) = nv + J.
\end{equation}

The degree of an $n$-valued map defined by the formula (\ref{deg}) is a homotopy invariant.

\begin{theorem}\cite{Brown3}
    If $n$-valued maps $f, g$ are homotopic, then $\deg(f) = \deg(g)$.

\end{theorem}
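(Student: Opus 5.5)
The plan is to give $\deg(f)$ an intrinsic description that is visibly locally constant along a homotopy. Let $H\colon S^1\times[0,1]\multimap S^1$ be an $n$-valued homotopy from $f$ to $g$. Then $H\circ(r\times \mathrm{id})\colon [0,1]\times[0,1]\multimap S^1$ has simply connected domain. By the splitting lemma of \cite{Schirmer1} it splits into $n$ continuous maps $h_0,\dots,h_{n-1}\colon[0,1]^2\to S^1$. Each $h_j$ lifts to $\tilde h_j\colon[0,1]^2\to\R$, and because the values are pairwise distinct we may order the lifts so that
\[
\tilde h_0(t,s)<\tilde h_1(t,s)<\dots<\tilde h_{n-1}(t,s)<\tilde h_0(t,s)+1
\]
for all $(t,s)$. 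This strict ordering holds at one point, and disjointness of the $h_j$ together with connectedness of $[0,1]^2$ propagates it.

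For each fixed $s$ we define $d(s)\in\Z$ as follows. Consider the set $L_s=\{\tilde h_j(0,s)+k;\ j,k\}\subset\R$. It is the full preimage of $H(r(0),s)$ under $r$, a discrete, $1$-periodic set with $n$ points per unit interval. Order its points increasingly and label them by $\Z$, normalized so that $\tilde h_0(0,s)$ has label $0$. The endpoint $\tilde h_0(1,s)$ lies in the preimage of $H(r(1),s)=H(r(0),s)$, hence in $L_s$. Let $d(s)$ be its label. At $s=0$, with $\tilde f_j=\tilde h_j(\cdot,0)$ suitably normalized (shifting all lifts by the same integer and relabelling cyclically does not change the count), the label of $v+t_J$ is exactly $nv+J$. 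So $d(0)=\deg(f)$ and $d(1)=\deg(g)$. I will also check that $d(s)$ does not depend on the choice of starting lift: a uniform integer shift of all $\tilde h_j$ leaves it unchanged, and this is why $\deg$ is well defined in the first place.

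It then remains to show that $s\mapsto d(s)$ is continuous, hence constant. Label the points of $L_s$ as $\lambda_k(s)=\tilde h_{k \bmod n}(0,s)+\lfloor k/n\rfloor$. Each $\lambda_k$ is continuous in $s$, and the ordering is preserved for all $s$. The endpoint $\tilde h_0(1,s)$ is continuous in $s$ and always equals some $\lambda_k(s)$. For fixed $s_0$, the sets $\{s;\ \tilde h_0(1,s)=\lambda_k(s)\}$ are closed, pairwise disjoint (the $\lambda_k(s)$ are distinct), and cover $[0,1]$. Near $s_0$ only finitely many $k$ are relevant, since all values are bounded. So by connectedness a single $k$ works for every $s$, and $d(s)=k$ is constant.

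The main obstacle is the bookkeeping in the middle step. We must verify that the label of $\tilde h_0(1,0)$ in this $\Z$-indexing really reproduces the paper's formula $nv+J$. This requires the normalization $\tilde f_j(0)=t_j\in[0,1)$ with $t_0<\dots<t_{n-1}$, under which $\tilde h_0(0,0)=t_0$ corresponds to label $0$ and $t_J+v$ corresponds to label $nv+J$. The subtlety is that the splitting at $s=0$ of the homotopy need not start with the component whose initial value is the smallest $t_0$. I will handle this by first proving that the label is the same whichever component is chosen as "$h_0$". The map $\tilde h_j(0)\mapsto\tilde h_j(1)$ commutes with the shift $x\mapsto x+1$ and is order preserving on $L_0$, so it is a translation of labels by a constant. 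Hence every starting component gives the same label shift, and that common shift is the degree.
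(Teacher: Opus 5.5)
The paper does not prove this statement; it quotes it from \cite{Brown3}. So there is no in-paper argument to compare against, and your proof is correct and self-contained.

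The key steps all check out:
\begin{itemize}
\item The chain $\tilde h_0<\tilde h_1<\dots<\tilde h_{n-1}<\tilde h_0+1$ propagates over $[0,1]^2$, because each difference $\tilde h_k-\tilde h_j$ avoids $\mathbb{Z}$.
\item By compactness only finitely many of the closed, pairwise disjoint sets $\{s;\ \tilde h_0(1,s)=\lambda_k(s)\}$ are nonempty, so by connectedness one of them is all of $[0,1]$.
\item The map $\tilde h_j(0,s)+k\mapsto\tilde h_j(1,s)+k$ is a strictly increasing self-map of $L_s$ commuting with $x\mapsto x+1$. Its $n$ label increments over one period are each at least $1$ and sum to $n$, so each equals $1$. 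It is therefore a translation of labels, and the shift does not depend on which branch is called $h_0$ or on the integer choice of lifts.
\end{itemize}

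The one step you use tacitly is that the slice $\{h_j(\cdot,0)\}$ is, up to order, the splitting $\{f_0,\dots,f_{n-1}\}$ in the definition of $\deg f$; the same is needed at $s=1$ for $g$. This is uniqueness of splittings over a connected domain: the set where $h_j(\cdot,0)=f_k$ is closed, and it is open by local separation of the $n$ values. You should state it explicitly.

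Relative to the paper's own machinery, your argument is a lifted version of the device in the proof of Lemma~\ref{lemma:homopowerloop}. There the permutation $p$ with $h_j(1,s)=h_{p(j)}(0,s)$ is taken to be independent of $s$. That permutation only records $J$, i.e.\ $\deg f \bmod n$. Labelling the whole fibre $r^{-1}(H(r(0),s))$ also tracks the winding $v$, which is exactly what invariance of $nv+J$ requires.
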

Now we introduce the crucial class of so-called power maps, which will serve as a reference point for the study of general $n$-valued self-maps of the circle.

\begin{definition}\label{power}

Given integers $d$ and $n \geqslant 1$, the \textit{$n$-valued power map of degree $d$}, denoted $\phi_{n,d} \colon S^1 \multimap S^1$, is the $n$-valued map defined as follows:
\begin{equation}
\phi_{n,d}(r(t)) = \left\{ r\left(\frac{d}{n} t\right), r\left(\frac{d}{n} t + \frac{1}{n}\right), \ldots, r\left(\frac{d}{n} t + \frac{n-1}{n}\right)\right\},
\end{equation}
where $r \colon \mathbb{R} \to S^1$ is the covering map given by $r(t) = e^{i2\pi t}$.
\end{definition}

\noindent If we view the circle $S^1$ as the quotient group $S^1 = \R/\mathbb{Z}$, then every point from $S^1$ is represented by some $s \in \R$, and two real numbers $s_1, s_2$ represent the same point of the circle if and only if $s_1 - s_2 \in \mathbb{Z}$. For given $n \in \mathbb{N_+}$ and $d \in \mathbb{Z}$ using this representation we describe $\phi_{n,d}$ alternatively as follows

\begin{equation}
    \phi_{n,d}(s) = \Big\{\frac{d}{n}s + \frac{u}{n};~~u \in \{0,1,\ldots,n-1\}  \Big\} \pmod 1,   
\end{equation}
\vspace{1.5pt}
for $s \in [0,1)$.

 The next lemma provides the value of the degree for an $n$-valued power map. The proof  follows directly from the definition of the degree.
\begin{lemma} \cite{Brown3}
 $\deg(\phi_{n,d}) =d$.
\end{lemma}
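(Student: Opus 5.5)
The plan is to compute the degree straight from formula \eqref{deg}, by writing down explicitly the splitting of $\phi_{n,d}\circ r$ on $[0,1]$ and the lifts of its branches.

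\emph{Step 1: the splitting.} For $j=0,1,\ldots,n-1$ I will set
\[
f_j\colon [0,1]\to S^1,\qquad f_j(t)=r\!\left(\tfrac{d}{n}t+\tfrac{j}{n}\right).
\]
Each $f_j$ is continuous. By Definition~\ref{power}, $(\phi_{n,d}\circ r)(t)=\{f_0(t),\ldots,f_{n-1}(t)\}$ for every $t\in[0,1]$. So $\phi_{n,d}\circ r=\{f_0,\ldots,f_{n-1}\}$ is a splitting in the sense of Definition~\ref{def:split}. It remains to check that this labeling matches the normalization used in the definition of the degree. We have $f_j(0)=r(j/n)$, so $t_j=j/n$, and indeed $0=t_0<t_1<\cdots<t_{n-1}=\frac{n-1}{n}<1$. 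The branches are pairwise distinct, since $f_j(t)=f_k(t)$ would force $(j-k)/n\in\Z$. Hence this is the ordering required in the construction of $\deg$.

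\emph{Step 2: the lifts.} The lift of $f_j$ starting at $t_j$ is the affine function
\[
\tilde f_j(t)=\tfrac{d}{n}t+\tfrac{j}{n},
\]
because $r\circ\tilde f_j=f_j$ and $\tilde f_j(0)=j/n=t_j$. In particular $\tilde f_0(1)=d/n$.

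\emph{Step 3: read off the degree.} Apply the division algorithm: $d=nv+J$ with $v\in\Z$ and $0\le J\le n-1$, that is, $v=\lfloor d/n\rfloor$. This includes the case $d<0$. Then
\[
\tilde f_0(1)=\frac{d}{n}=v+\frac{J}{n}=v+t_J .
\]
This is exactly the form $\tilde f_0(1)=v+t_J$ appearing in the definition. Since the $t_j$ lie in $[0,1)$ and are distinct, the pair $(v,J)$ is uniquely determined. Formula \eqref{deg} therefore gives $\deg(\phi_{n,d})=nv+J=d$.

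There is no real obstacle. The only point needing care is the bookkeeping in Step 1: the branches must be indexed so that their initial values $t_j$ are increasing in $[0,1)$, as the definition of $\deg$ requires. One must also use floor division for negative $d$, so that $0\le J\le n-1$ holds and the decomposition $\tilde f_0(1)=v+t_J$ is the one the definition intends.
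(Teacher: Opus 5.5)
Your proposal is correct and is essentially the paper's approach: the paper only remarks that the lemma follows directly from the definition of the degree. Your explicit splitting $f_j(t)=r\bigl(\tfrac{d}{n}t+\tfrac{j}{n}\bigr)$ with $t_j=j/n$, the lift giving $\tilde f_0(1)=d/n$, and the floor division $d=nv+J$ with $0\le J\le n-1$ (valid also for negative $d$) carry out exactly that computation.
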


 One of the most important results in the theory of  $n$-valued power maps is the following  theorem, which provides a complete homotopy classification of $n$-valued maps of the circle, showing that each such map is determined, up to homotopy, by its degree.

\begin{theorem}[Classification Theorem,  \cite{Brown3}]
\label{th:classification}
    If $f \colon S^1 \multimap S^1$ is an $n$-valued map of degree $d$, then $f \simeq \phi_{n,d}$.
\end{theorem}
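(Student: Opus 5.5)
The plan is to build an explicit $n$-valued homotopy from $f$ to $\phi_{n,d}$ by straightline homotopy of the lifts. Write $f\circ r=\{f_0,\dots,f_{n-1}\}$ as in the definition of degree, with lifts $\tilde f_0<\tilde f_1<\dots<\tilde f_{n-1}$ on $[0,1]$ and $\tilde f_j(0)=t_j$. First I will record the boundary behaviour. Since the sets $(f\circ r)(0)$ and $(f\circ r)(1)$ coincide and the lifts stay strictly ordered and pairwise distinct mod $1$, the lifted values at $1$ form the set $\{t_k+\ell\}$, shifted by one period. The relation $\tilde f_0(1)=v+t_J$ then forces
\[
\tilde f_j(1)=\tilde f_{j+J}(0)+v \quad (j+J\le n-1),\qquad \tilde f_j(1)=\tilde f_{j+J-n}(0)+v+1 \quad (j+J\ge n).
\]
Equivalently, define $\tilde f_j$ for all $j\in\Z$ by $\tilde f_{j+n}=\tilde f_j+1$. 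This gives a strictly increasing family with $\tilde f_{j+1}-\tilde f_j>0$ and $\tilde f_j(1)=\tilde f_{j+d}(0)$, where $d=nv+J$.

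For the power map, the analogous lifts are $\tilde\phi_j(t)=\frac{d}{n}t+\frac{j}{n}$, with $\tilde\phi_{j+n}=\tilde\phi_j+1$ and $\tilde\phi_j(1)=\tilde\phi_{j+d}(0)$. The same index shift $d$ appears on both sides. This matching of boundary identifications is the essential point, and it is exactly where the hypothesis $\deg f=d$ enters.

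Next I will define $\tilde H_j(t,s)=(1-s)\tilde f_j(t)+s\tilde\phi_j(t)$ and set $H(r(t),s)=\{r(\tilde H_j(t,s)) : 0\le j\le n-1\}$. There are three things to check.

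\emph{Exactly $n$ values.} Take $j<k<j+n$. Both families are strictly increasing in the index, so $\tilde H_j<\tilde H_k<\tilde H_{j+n}=\tilde H_j+1$, and hence the values are distinct mod $1$.

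\emph{Well defined at $t=0\sim1$.} The identities $\tilde H_j(1,s)=\tilde H_{j+d}(0,s)$ and $\tilde H_{j+n}=\tilde H_j+1$ are both preserved by convex combination. Therefore the set at $t=1$ equals the set at $t=0$ mod $1$.

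\emph{Continuity.} Each branch is continuous on $[0,1]\times[0,1]$. Together with the gluing, this gives lower semicontinuity of $H$ on $S^1\times[0,1]$, and by the remark citing \cite{Brown2}, lower semicontinuity suffices for continuity.

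Since $H_0=f$ and $H_1=\phi_{n,d}$, this proves $f\simeq\phi_{n,d}$.

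I expect the main obstacle to be the first step: carefully deriving the boundary relation $\tilde f_j(1)=\tilde f_{j+d}(0)$ for every $j$, not just $j=0$. The argument is that $t\mapsto\tilde f_j(1)$ must be a strictly increasing enumeration of the set $\{t_k+\ell\}$. Since the lifts satisfy $\tilde f_{n-1}(t)<\tilde f_0(t)+1$, consecutive branches at $t=1$ must be consecutive elements of that lattice. Once $\tilde f_0(1)=t_J+v$ is fixed, the rest follows by induction on $j$. The continuity check near the seam is routine because $f$ is locally split.
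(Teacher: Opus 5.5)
Your proof is correct. The paper gives no argument of its own for this statement; it imports it from \cite{Brown3}. So there is no internal route to compare against. What you have written is a self-contained proof that uses exactly the lift data the paper uses to define $\deg f$. It also makes visible why degree is the complete invariant: the index shift $d$ in the seam identity $\tilde f_j(1)=\tilde f_{j+d}(0)$ is the only thing the straight-line homotopy must preserve.

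Two steps deserve to be written out more carefully, though neither is a gap.

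\emph{The boundary relation.} This is cleanest as a counting statement. Let $L=r^{-1}(f(r(0)))$. The map $j\mapsto\tilde f_j(0)$ is an order-isomorphism $\Z\to L$. The map $j\mapsto\tilde f_j(1)$ is strictly increasing into $L$ and satisfies $\tilde f_{j+n}(1)=\tilde f_j(1)+1$. Since $L$ meets every half-open unit interval in exactly $n$ points, the $n$ distinct values $\tilde f_0(1),\dots,\tilde f_{n-1}(1)\in[\tilde f_0(1),\tilde f_0(1)+1)$ are all of $L$ in that interval. Hence both maps are order-isomorphisms $\Z\to L$, so they differ by a constant index shift. The identity $\tilde f_0(1)=v+t_J=\tilde f_{nv+J}(0)$ pins that shift down as $d$.

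\emph{Continuity.} You can avoid the appeal to lower semicontinuity entirely. Set $\tilde H_j(t+1,s)=\tilde H_{j+d}(t,s)$; this extends the branches continuously to $\R\times[0,1]$, and it is consistent at integer values of $t$ by your seam identity. Then $H(r(t),s)=\{r(\tilde H_j(t,s)) : 0\le j\le n-1\}$ holds for all real $t$. So $H\circ(r\times\mathrm{id})$ is split on every strip of width less than $1$, which makes $H$ locally split by continuous branches and therefore continuous.
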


\section{The number of coincidences of an $(n,m)$-valued pair of power maps}\label{sec:power}

We begin by distinguishing two types of coincidence points. A point $x \in X$ is a \textit{fixed point} of an $n$-valued function $f \colon X \multimap X$ if $x \in f(x)$. In \cite{Brown1}, a point $x \in X$ is called a \textit{coincidence point} of an $n$-valued map $f \colon X \multimap Y$ and an $m$-valued map $g \colon X \multimap Y$ if $f(x) \cap g(x) \neq \emptyset$. To emphasize that this point $x$ is an element of the domain set $X$, will call such a point a \textit{domain coincidence point}, and the \textit{domain coincidence set} of $f$ and $g$ is denoted by
\begin{equation}
    \label{coinset}
    \Coin_X(f:g) = \{ x \in X;~~f(x) \cap g(x) \neq \emptyset \} \subseteq X.
\end{equation}
We define the \textit{minimum number of domain coincidences} $\MC_X(f:g)$ as the minimum cardinality of $\Coin_X(f' : g')$ among all $(n,m)$-valued pairs of maps $(f',g')$ homotopic to $(f,g)$. 

For $(x,y) \in X \times Y$, we say $(x,y)$ is a \textit{graph intersection point} of $(f, g)$ if $y \in f(x) \cap g(x)$. The graph intersection set is defined as follows
    \begin{equation}\label{graphcoin}
        \Coin_{X \times Y}(f:g) = \{ (x,y) \in X \times Y;~~y \in f(x) \cap g(x)\} \subseteq X\times Y.
    \end{equation}
The \textit{minimum number of graph intersections} $\MC_{X \times Y}(f:g)$ is defined to be the minimum cardinality of $\Coin_{X \times Y}(f' : g')$ among all $(n,m)$-valued pairs of maps $(f',g')$ homotopic to $(f,g)$.

    \begin{proposition}\label{MCXleMCXY}
    For an $n$-valued map $f \colon X \multimap Y$ and an $m$-valued map $g \colon X \multimap Y$, we have
    \begin{equation*}
        \# \Coin_X(f:g) \leqslant \# \Coin_{X \times Y}(f:g).
    \end{equation*}
\end{proposition}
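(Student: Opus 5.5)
The plan is to compare the two sets through the projection onto the first factor. Let $\pr \colon X \times Y \to X$ be the projection $\pr(x,y)=x$. I will show that $\pr$ maps $\Coin_{X\times Y}(f:g)$ onto $\Coin_X(f:g)$. The inequality then follows from the elementary fact that the image of a set under any function has cardinality at most that of the set. This holds for infinite cardinalities as well, using the axiom of choice to pick a section, or simply because surjections do not increase cardinality.

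First I check that $\pr$ sends graph intersection points to domain coincidence points. If $(x,y) \in \Coin_{X\times Y}(f:g)$, then by definition $y \in f(x)\cap g(x)$, so $f(x)\cap g(x)\neq\emptyset$. Hence $x=\pr(x,y)\in \Coin_X(f:g)$.

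Next I check surjectivity. Given $x \in \Coin_X(f:g)$, the set $f(x)\cap g(x)$ is nonempty, so I choose some $y$ in it. Then $(x,y)\in\Coin_{X\times Y}(f:g)$ and $\pr(x,y)=x$. In fact $f(x)\cap g(x)$ is finite, with at most $\min(n,m)$ elements, so only a finite choice is needed at each point. It follows that
\[
\#\Coin_X(f:g)=\#\pr\bigl(\Coin_{X\times Y}(f:g)\bigr)\leqslant \#\Coin_{X\times Y}(f:g).
\]

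No step here is a genuine obstacle. The only point deserving care is the infinite case, and there the surjection argument already suffices. More precisely, the fibre of $\pr$ over each $x$ is $\{x\}\times(f(x)\cap g(x))$, which has between $1$ and $\min(n,m)$ elements. This gives the refined sandwich $\#\Coin_X(f:g)\leqslant \#\Coin_{X\times Y}(f:g)\leqslant \min(n,m)\,\#\Coin_X(f:g)$, but only the first inequality is needed for the proposition.
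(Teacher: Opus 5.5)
Your proof is correct and is essentially the paper's argument. The paper simply observes that over every $x \in \Coin_X(f:g)$ there lies at least one point of $\Coin_{X\times Y}(f:g)$, which is exactly the surjectivity of the projection onto $X$ that you verify (your remarks on infinite cardinalities and the upper bound $\min(n,m)\,\#\Coin_X(f:g)$ are correct but not needed).
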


\begin{proof}
There is at least one point $(x,y) \in \Coin_{X \times Y}(f:g)$ for every $x \in \Coin_X(f:g)$.
\end{proof}
Note that the example in Figure~\ref{introfig} demonstrates that $\#\Coin_X(f,g)$ need not equal \linebreak $\#\Coin_{X\times Y}(f,g)$, which motivates the main theme of this paper.

In the forthcoming part of this section we determine $\Coin_X(\phi_{n,a} \colon \phi_{m,b})$. We obtain the same result as in \cite[Proposition 5.3]{Brown1} but our proof is different. Moreover our approach also enables us to compute $\Coin_{X\times Y}(\phi_{n,a} \colon \phi_{m,b})$.
 We start by proving two lemmas.

\begin{lemma}
\label{lemma:main}
Let $n,m$ be integers with $n\geq m \geq 2$, and $\GCD(n,m)=1$ and $g\in \mathbb{Z}$. Then there exists $c,d\in \mathbb N$ with $0\le c < n$ and $0 \le d < m$ such that
\begin{equation}
    g \equiv (nd - mc) \pmod{nm}.
    \label{eq:mainlemma}
\end{equation}
\end{lemma}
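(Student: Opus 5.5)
This is a counting argument via the Chinese Remainder Theorem. Consider the map
\[
F\colon \{0,\dots,n-1\}\times\{0,\dots,m-1\}\to \mathbb{Z}/nm\mathbb{Z},\qquad F(c,d)=nd-mc \pmod{nm}.
\]
Both sets have exactly $nm$ elements, so it suffices to show that $F$ is injective. Surjectivity then follows, which is precisely the claim that every $g\in\mathbb{Z}$ satisfies \eqref{eq:mainlemma} for some admissible $c,d$. The hypothesis $n\ge m\ge 2$ plays no role beyond ensuring that the ranges are nonempty; only $\GCD(n,m)=1$ matters.

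For injectivity, suppose $F(c,d)=F(c',d')$. Then
\[
n(d-d')\equiv m(c-c') \pmod{nm}.
\]
Reducing modulo $n$ gives $m(c-c')\equiv 0 \pmod n$. Since $\GCD(n,m)=1$, this forces $n\mid c-c'$. As $|c-c'|<n$, we get $c=c'$.

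Reducing modulo $m$ instead gives $n(d-d')\equiv 0\pmod m$. The same reasoning yields $m\mid d-d'$, and since $|d-d'|<m$, we get $d=d'$. So $F$ is injective, hence bijective.

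If an explicit choice is preferred, one can also construct $c$ and $d$ directly. Take $d$ to be the representative in $[0,m)$ of $g\,n^{-1}\bmod m$, and $c$ the representative in $[0,n)$ of $-g\,m^{-1}\bmod n$. Then $nd-mc\equiv g$ holds both modulo $m$ and modulo $n$, so by the Chinese Remainder Theorem it holds modulo $nm$.

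There is no real obstacle here. The only point needing care is the cancellation step, where coprimality is used to pass from $n\mid m(c-c')$ to $n\mid(c-c')$ (and likewise modulo $m$).
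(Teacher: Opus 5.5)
Your proof is correct, but your main argument takes a different route from the paper's.

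The paper argues constructively. From a B\'ezout identity $an+bm=1$ it writes $g=gan+gbm$, lets $d$ be the remainder of $ga$ modulo $m$ and $c$ the remainder of $-gb$ modulo $n$, and expands to obtain $g=(nd-mc)+(k-l)nm$. You instead show that $(c,d)\mapsto nd-mc \bmod nm$ is injective on a set of $nm$ elements, and conclude surjectivity by counting.

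Your optional explicit construction is the paper's construction in different language. The identity $an+bm=1$ says precisely that $a\equiv n^{-1}\pmod m$ and $b\equiv m^{-1}\pmod n$. The only difference is in the verification: you check the congruence modulo $m$ and modulo $n$ separately and invoke the Chinese Remainder Theorem, while the paper expands directly.

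The paper's route gives explicit formulas for $c$ and $d$ from a B\'ezout pair. Your counting route additionally gives uniqueness of $(c,d)$ in the prescribed ranges. That is useful here: right after Lemma~\ref{lemma:Coin_X}, the paper asserts that every coincidence $s$ ``is realized by a unique pair $(u,v)$'' and uses this to deduce Corollary~\ref{cor:Coin_XY}. The lemma as stated and proved in the paper only gives existence, so your injectivity step supplies exactly the missing half.
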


\begin{proof}
    Since $\GCD(n,m)=1$, from Bézout's identity there are integers $a,b$ with $an + bm = 1$. Then we have $gan + gbm = g$. Let $d$ be the remainder of $ga$ after division by $m$ and $c$ be the remainder of $-gb$ after division by $n$, so $ga = km + d$ and $-gb = ln + c$ for some integers $k,l$ and $0\le c < n$, $0 \le d < m$. We have
    \begin{equation*}
    \begin{aligned}
        g &= gan + gbm\\
         &= kmn + dn - lnm - cm\\
         &= (nd - mc) + (k-l)nm\\
         &\equiv (nd - mc) \pmod{nm}.
    \end{aligned}
    \end{equation*}
\end{proof}

\begin{lemma}
\label{lemma:copies}
    Let $\phi_{n,a} \colon S^1 \multimap S^1$ be an $n$-valued power map of degree $a$ and $w \in \mathbb{N}$. If $w \mid n$, then for every $t \in \{0,1,\ldots,w-1\}$ and every $s \in [0,1)$, we have the following equality of sets:
    \begin{equation}
    \label{eq:copies}
        (\phi_{n,a}(s) + \frac{t}{w}) \pmod 1 = \phi_{n,a}(s),
    \end{equation}
    where the symbol $+$ means addition of the number $\frac{t}{w}$ to every element of the set $ \phi_{n,a}(s)$.
\end{lemma}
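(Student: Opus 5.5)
The plan is to work directly with the description of $\phi_{n,a}$ as a subset of $\R/\Z$:
\[
\phi_{n,a}(s)=\Big\{\tfrac{a}{n}s+\tfrac{u}{n} \pmod 1;~~u\in\{0,1,\ldots,n-1\}\Big\}.
\]
Since $w\mid n$, write $n=wk$ with $k\in\mathbb N$. Then $\tfrac{t}{w}=\tfrac{tk}{n}$. Translating every element of $\phi_{n,a}(s)$ by $\tfrac{t}{w}$ gives the set
\[
\Big\{\tfrac{a}{n}s+\tfrac{u+tk}{n} \pmod 1;~~u\in\{0,\ldots,n-1\}\Big\}.
\]

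The key step is that the map $u\mapsto u+tk \pmod n$ is a bijection of $\Z/n\Z$, being a translation. Moreover, the value $\tfrac{a}{n}s+\tfrac{u'}{n}\pmod 1$ depends only on $u'$ modulo $n$. This holds because replacing $u'$ by $u'+n$ changes the number by exactly $1$.

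Combining these two facts, the translated set is
\[
\Big\{\tfrac{a}{n}s+\tfrac{u'}{n} \pmod 1;~~u'\in\{0,\ldots,n-1\}\Big\}=\phi_{n,a}(s),
\]
which is equality \eqref{eq:copies}. The identity holds as an equality of sets, so no ordering issues arise. Cardinality is also preserved, since a translation of the circle is injective.

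There is no real obstacle here. The only point requiring care is to reduce the index $u+tk$ modulo $n$ rather than the real number itself, and to note that this reduction is compatible with reduction mod $1$ of $\tfrac{u+tk}{n}$. The hypothesis $t\in\{0,\ldots,w-1\}$ is not actually needed: the argument works for any integer $t$.
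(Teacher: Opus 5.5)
Your proof is correct and follows essentially the same route as the paper. The paper also writes $\frac{t}{w}=\frac{(n/w)t}{n}$, reindexes by $\tilde u=u+\frac{n}{w}t$, and uses that $n$ consecutive integers form a complete residue system modulo $n$, which is your translation-bijection of $\mathbb{Z}/n\mathbb{Z}$ (and you are right that the restriction $t\in\{0,\ldots,w-1\}$ is not needed).
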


\noindent The lemma above means that the graph of $\phi_{n,a}$ splits vertically into $w$ different identical blocks. The following figure illustrates this fact for the example of $\phi_{6,2}$ and $w = 3$. 



\begin{proof}
    Let us take any $\phi_{n,a}(s) = \Big\{\frac{a}{n}s + \frac{u}{n};~~u \in \{0,1,\ldots,n-1\}  \Big\} (\modulo 1)$ and $w \in \mathbb{N}$ such that $w \mid n$. Then for $t \in \{0,1,\ldots,w-1\}$, we have:
    \begin{equation*}
    \begin{aligned}
       (\phi_{n,a}(s) + \frac{t}{w})  &= \left\{\frac{a}{n}s + \frac{u}{n} + \frac{t}{w};~~u \in \{0,1,\ldots,n-1\}  \right\}\pmod 1\\
        &= \left\{\frac{a}{n}s + \frac{u+\frac{n}{w}t}{n};~~u \in \{0,1,\ldots,n-1\}  \right\} \pmod 1\\ 
         &= \left\{\frac{a}{n}s + \frac{\Tilde{u}}{n};~~\Tilde{u} \in \{\frac{n}{w}t,\frac{n}{w}t+1,\ldots,\frac{n}{w}t+n-1\}  \right\} \pmod 1.
    \end{aligned}
    \end{equation*}
    As $\frac{n}{w}t$ is an integer, to end the proof it is sufficient to notice that, for a fixed $t$,
    \begin{equation*}
        \begin{aligned}
            \{0,1,\ldots,n-1\} \equiv \{\frac{n}{w}t,\frac{n}{w}t+1,\ldots,\frac{n}{w}t+n-1\} \pmod n,
        \end{aligned}
    \end{equation*}
   constitutes the set of $n$ consecutive natural numbers. 
\end{proof}

\begin{figure}[ht]
    \centering
    \includegraphics[width = 0.43\textwidth]{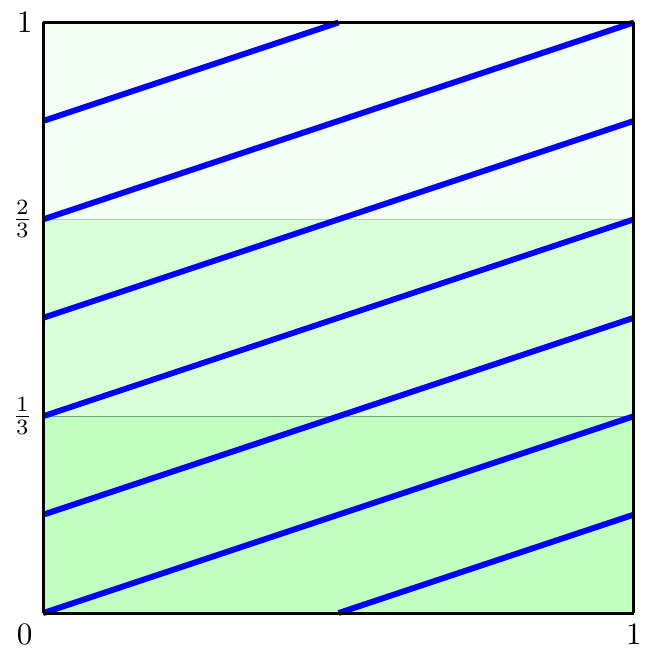}
    \caption{Three identical vertical blocks of $\phi_{6,2}$.}
\end{figure}

In order to compute the number of coincidences for an $(n,m)$-valued pair of maps, we first consider the case $am = bn$.

\begin{theorem}
\label{th:domainS1}
    If $am = bn$, then
    \begin{equation}
         \Coin_{X}(\phi_{n,a} : \phi_{m,b}) = S^1,
    \end{equation}
    but there exists some $\varepsilon>0$ such that
     \begin{equation}
         \Coin_{X}(\phi_{n,a} \colon \phi_{m,b}+\varepsilon) = \emptyset. 
    \end{equation}
\end{theorem}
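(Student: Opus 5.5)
Under the hypothesis $am=bn$ we have $\frac{a}{n}=\frac{b}{m}=:\lambda$. So both power maps are unions of parallel lines of the same slope on the torus. In the $\R/\Z$ description,
\[
\phi_{n,a}(s)=\Big\{\lambda s+\tfrac{u}{n}\Big\} \pmod 1, \qquad \phi_{m,b}(s)=\Big\{\lambda s+\tfrac{v}{m}\Big\} \pmod 1 ,
\]
with $u\in\{0,\dots,n-1\}$ and $v\in\{0,\dots,m-1\}$.

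For the first claim, take $u=v=0$. Then $\lambda s \pmod 1$ lies in both $\phi_{n,a}(s)$ and $\phi_{m,b}(s)$ for every $s\in[0,1)$, so every point of $S^1$ is a domain coincidence point. This gives $\Coin_X(\phi_{n,a}:\phi_{m,b})=S^1$.

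For the second claim, the shifted map $\phi_{m,b}+\varepsilon$ means adding $\varepsilon$ to every value, as in Lemma \ref{lemma:copies}. It is still a continuous $m$-valued map, homotopic to $\phi_{m,b}$ through the shifts by $t\varepsilon$, $t\in[0,1]$. A coincidence at $s$ would require
\[
\lambda s+\tfrac{u}{n}\equiv \lambda s+\tfrac{v}{m}+\varepsilon \pmod 1,
\qquad\text{that is,}\qquad
\varepsilon\equiv \tfrac{u}{n}-\tfrac{v}{m}=\tfrac{um-vn}{nm}\pmod 1 .
\]
This condition does not depend on $s$. The set $\{\frac{um-vn}{nm} \bmod 1\}$ is a finite subset of $\frac{1}{nm}\Z/\Z$. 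Hence any $\varepsilon$ not congruent to a multiple of $\frac{1}{nm}$ works, for instance $\varepsilon=\frac{1}{2nm}$. With this choice no coincidence exists for any $s$, so $\Coin_X(\phi_{n,a}:\phi_{m,b}+\varepsilon)=\emptyset$.

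There is no real obstacle here. The only care needed is to check that the shift keeps $\phi_{m,b}+\varepsilon$ a well-defined $m$-valued map. This holds because translation is a homeomorphism of $S^1$ and commutes with taking $\bmod\ 1$. One should also note that the congruence is independent of $s$ precisely because the slopes coincide, which is where the hypothesis $am=bn$ is used.
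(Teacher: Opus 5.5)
Your proof is correct and matches the paper's argument. Equal slopes $\frac{a}{n}=\frac{b}{m}$ with $u=v=0$ give the first claim. For the second, the shift $\varepsilon=\frac{1}{2nm}$ turns the coincidence condition into the $s$-independent congruence $\frac{1}{2nm}\equiv\frac{um-vn}{nm}\pmod 1$, which is impossible; you also correctly omit the paper's unneeded normalization $n\ge m$.
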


\begin{proof}
    Let us notice that $am = bn$ is equivalent to $\frac{a}{n} = \frac{b}{m}$, so the slopes of the lines of these power maps are equal. This means that for every $s \in [0,1)$
    \begin{equation*}
        \phi_{n,a}(s) = \frac{a}{n}s + \frac{0}{n} = \frac{b}{m} s + \frac{0}{m} = \phi_{m,b}(s), 
    \end{equation*}
and so $\Coin_{X}(\phi_{n,a} : \phi_{m,b}) = S^1$.
    
   For the proof of the second part of the theorem, assume without loss of generality that $n\ge m$.
Consider the translation
\[
\varphi(x) \coloneqq \phi_{m,b}(x)+\frac{1}{2nm}.
\]
Then $x\in\Coin_X(\phi_{n,a}:\varphi)$ if and only if there exist
$u\in\{0,1,\ldots,n-1\}$ and $v\in\{0,1,\ldots,m-1\}$ such that
\[
\frac{a}{n}x+\frac{u}{n}\equiv \frac{b}{m}x+\frac{v}{m}+\frac{1}{2nm}\pmod 1.
\]
In this case there exists $k\in\mathbb Z$ such that
\[
\frac{a}{n}x+\frac{u}{n}-\left(\frac{b}{m}x+\frac{v}{m}+\frac{1}{2nm}\right)=k.
\]
Multiplying by $nm$ yields
\[
(am-bn)x+(um-vn)-\frac12 = knm.
\]
Since $am=bn$, this reduces to
\[
0 = vn-um+\frac12+knm,
\]
which is impossible since the right-hand side is not an integer.
Therefore $\Coin_X(\phi_{n,a}:\varphi)=\emptyset$.
This completes the proof. 
  \end{proof}

Now we consider the case of $am \neq bn$ and $\GCD(n,m) = 1$. 

\begin{lemma}\label{lemma:Coin_X}
      If $am \neq bn$ and $\GCD(n,m) = 1$, then
    \begin{equation}
        \Coin_X(\phi_{n,a} : \phi_{m,b}) = \left\{0, \frac{1}{k}, \dots, \frac{k-1}{k}   \right\},
    \end{equation}
    where $k = \vert am - bn \vert$. Thus we have  
        \begin{equation}
        \#\Coin_X(\phi_{n,a} : \phi_{m,b}) = \vert am - bn \vert.
    \end{equation}
\end{lemma}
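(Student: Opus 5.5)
The plan is to write the coincidence condition explicitly, as in the proof of Theorem~\ref{th:domainS1}, and then use Lemma~\ref{lemma:main} to show every candidate point is actually attained. A point $x\in[0,1)$ lies in $\Coin_X(\phi_{n,a}:\phi_{m,b})$ if and only if there exist $u\in\{0,\dots,n-1\}$, $v\in\{0,\dots,m-1\}$ and $\ell\in\mathbb Z$ with
\[
\frac{a}{n}x+\frac{u}{n}-\frac{b}{m}x-\frac{v}{m}=\ell .
\]
Multiplying by $nm$ gives
\[
(am-bn)x=(vn-um)+\ell nm .
\]
Write $D=am-bn\neq 0$ and $k=|D|$. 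Then $x\in\Coin_X$ exactly when $Dx$ belongs to the set $S=\{vn-um+\ell nm\}$, with $u,v,\ell$ ranging as above.

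The first step is the inclusion $\Coin_X\subseteq\{0,\frac1k,\dots,\frac{k-1}{k}\}$. It is immediate: $Dx$ must be an integer, so $x=j/D$ for some integer $j$. Reducing into $[0,1)$, this gives $x=i/k$ with $0\le i<k$.

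The second step is the reverse inclusion. Here we must show that for each $i$ with $0\le i<k$, the integer $g:=D\cdot\frac{i}{k}=\pm i$ lies in $S$. The key observation is that $S=\{g\in\mathbb Z : g\equiv nd-mc \pmod{nm} \text{ for some } 0\le c<n,\ 0\le d<m\}$, obtained by taking $d=v$, $c=u$ and absorbing multiples of $nm$ into $\ell$. Lemma~\ref{lemma:main} says precisely that every integer $g$ has this form when $\GCD(n,m)=1$ and $n\ge m\ge 2$. Hence every integer lies in $S$, and every $i/k$ is a coincidence point.

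Two technicalities remain. First, Lemma~\ref{lemma:main} assumes $n\ge m\ge2$. I would handle this in two ways:
\begin{itemize}
\item The roles of $(n,a)$ and $(m,b)$ are symmetric: swapping them only changes the sign of $D$ and of $vn-um$. So we may assume $n\ge m$.
\item If $m=1$, then $S$ contains $vn-um+\ell nm=-u+\ell n$ with $u\in\{0,\dots,n-1\}$, which already covers all integers. The case $n=m=1$ is the same.
\end{itemize}
Second, the points $0,\frac1k,\dots,\frac{k-1}{k}$ are distinct in $S^1$, which gives $\#\Coin_X=k=|am-bn|$.

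I expect the main obstacle to be the bookkeeping in identifying $S$ with all integers, that is, matching signs and ranges with the form $nd-mc$ of Lemma~\ref{lemma:main} and handling the edge cases. The geometric content is routine once the linear equation is written down.
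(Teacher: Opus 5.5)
Your proposal is correct and follows the paper's proof. Both write the coincidence condition as $(am-bn)s \equiv vn-um \pmod{nm}$, deduce $s\in\{0,\frac1k,\dots,\frac{k-1}{k}\}$, and invoke Lemma~\ref{lemma:main} to show that each such $s$ is realized by some pair $(u,v)$. Your separate handling of the hypotheses $n\ge m\ge 2$, via symmetry and a direct check when $m=1$, is a harmless refinement the paper omits; in fact the Bézout argument in Lemma~\ref{lemma:main} never uses those hypotheses.
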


\begin{proof}
    We are looking for $s \in [0,1)$ for which $\phi_{n,a}(s) \cap \phi_{m,b}(s) \neq \emptyset$. So,

\begin{subequations} \label{eqn:maincong}
    \begin{align*}
     s \in \Coin_X(\phi_{n,a} : \phi_{m,b}) \iff \exists_{u, v}~~  \Big(\frac{a}{n}s + \frac{u}{n}\Big) &\equiv \Big( \frac{b}{m}s + \frac{v}{m} \Big) \pmod 1,  \\
      \frac{as + u}{n} &\equiv \frac{bs+v}{m} \pmod 1, \\
      (am - bn)s &\equiv (v n - u m) \pmod{nm}. \tag{\ref{eqn:maincong}} 
      \hspace{1cm} 
    \end{align*}
\end{subequations}

\noindent The right-hand side of the formula above in the  line (\ref{eqn:maincong}) is an integer, so the left-hand side also must be an integer. This means that there are exactly $|am-bn|$ solutions of $s$ in $[0,1)$. These solutions must have the form  $s = \frac{0}{k}, \frac{1}{k}, \ldots, \frac{k-1}{k}$, where $k = |a m - b n|$. By Lemma~\ref{lemma:main} for every solution $s$ there exist $u, v$ satisfying the formula~\eqref{eqn:maincong}. As a consequence,  in this case every solution $s = \frac{0}{k}, \frac{1}{k}, \ldots, \frac{k-1}{k}$, where $k = |a m - b n|$, must be a different domain coincidence point.
\end{proof}


\noindent Because of the fact that every solution $s$ from the proof of Lemma~\ref{lemma:Coin_X} is realized by a unique pair $(u, v)$, every $s$ generates exactly one $(x,y) \in \Coin_{X \times Y}(\phi_{n,a} : \phi_{m,b})$. So we conclude the following:

\begin{corollary} \label{cor:Coin_XY}
          If $am \neq bn$ and $\GCD(n,m) = 1$, then
    \begin{equation}
        \# \Coin_{X \times Y}(\phi_{n,a} : \phi_{m,b}) = \vert am - bn \vert.
    \end{equation}       
\end{corollary}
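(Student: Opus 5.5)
We have to prove Corollary~\ref{cor:Coin_XY}: if $am\neq bn$ and $\GCD(n,m)=1$, then $\#\Coin_{X\times Y}(\phi_{n,a}:\phi_{m,b})=|am-bn|$. The plan is to reuse the congruence~\eqref{eqn:maincong} from the proof of Lemma~\ref{lemma:Coin_X}. Graph intersection points will be parametrized by the triples $(s,u,v)$ solving it, and we then show that each admissible $s$ comes with exactly one pair $(u,v)$ and hence exactly one point $y$.

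First I set up the correspondence. A point $(s,y)$ lies in $\Coin_{X\times Y}$ iff $y=\frac{as+u}{n}=\frac{bs+v}{m} \pmod 1$ for some $u\in\{0,\dots,n-1\}$ and $v\in\{0,\dots,m-1\}$. For fixed $s$, distinct values of $u$ give distinct elements of $\phi_{n,a}(s)$, since the $n$ values are distinct mod $1$. So each graph intersection point over $s$ corresponds to a unique $u$, and likewise to a unique $v$. It therefore suffices to count the pairs $(u,v)$ for each $s\in\Coin_X$.

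Next I take $s=j/k$ with $k=|am-bn|$; by Lemma~\ref{lemma:Coin_X} these are the only candidates. For such $s$, the left side $(am-bn)s=\pm j$ is a fixed integer $g$. The condition becomes $vn-um\equiv g \pmod{nm}$. Existence of a solution is Lemma~\ref{lemma:main}, with the roles of $n,m$ adjusted; the hypothesis $n\ge m\ge 2$ is harmless, since the case $m=1$ or $n=1$ can be checked directly or by the same Bézout argument.

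The key step is uniqueness. Suppose $vn-um\equiv v'n-u'm \pmod{nm}$. Then $(v-v')n\equiv (u-u')m \pmod{nm}$. Reducing mod $n$ gives $(u-u')m\equiv 0 \pmod n$, and since $\GCD(n,m)=1$ this forces $u\equiv u' \pmod n$, so $u=u'$. Symmetrically, reducing mod $m$ gives $v=v'$. Equivalently, by the Chinese Remainder Theorem the map $(u,v)\mapsto vn-um$ from $\mathbb Z_n\times\mathbb Z_m$ to $\mathbb Z_{nm}$ is a bijection, which also reproves existence. Hence each of the $k$ points of $\Coin_X$ carries exactly one graph intersection point, and the total count is $|am-bn|$.

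The main point to handle carefully is that the paper treats $s$ as a real number in $[0,1)$, while the congruence is mod $nm$ for integers. I would make explicit that for $s=j/k$ the quantity $(am-bn)s$ is the integer $\pm j$. That makes the CRT argument clean; otherwise I expect no obstacle.
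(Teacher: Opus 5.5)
Your proof is correct and follows the paper's route. You use the congruence $(am-bn)s\equiv vn-um \pmod{nm}$ from Lemma~\ref{lemma:Coin_X}, get existence of $(u,v)$ from Lemma~\ref{lemma:main}, and conclude that each of the $|am-bn|$ domain coincidences carries exactly one graph intersection point. The paper only asserts that each $s$ is realized by a unique pair $(u,v)$; your Chinese Remainder argument (reducing mod $n$ and mod $m$ and using $\GCD(n,m)=1$) supplies that missing justification and reproves existence without the $n\ge m\ge 2$ hypothesis.
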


Now we consider the case of $\GCD(n,m) > 1$. Given two power maps $\phi_{n,a},\phi_{m,b}$ with $am \neq bn$, let $w= \GCD(n,m) \neq 1$. Let $\bar\phi_{n,a}(s) = \phi_{n,a}(s) \cap [0, \frac{1}{w})$ and $\bar\phi_{m,b}(s) = \phi_{m,b}(s) \cap [0, \frac{1}{w})$, so we consider the bottom blocks of the graph of given power maps, as described in Lemma \ref{lemma:copies}. 

\begin{lemma} \label{lemma:CoinXbottom}
            If $am \neq bn$ and $\GCD(n,m) = w \neq 1$, then
    \begin{equation}
        \Coin_X(\bar\phi_{n,a} : \bar\phi_{m,b}) = \Big\{0, \frac{w}{k}, \frac{2w}{k}, \ldots, \frac{k-w}{k}  \Big\},
    \end{equation}
    where $k = \vert am - bn \vert$. Thus we have  
        \begin{equation}
        \# \Coin_X(\bar\phi_{n,a} : \bar\phi_{m,b}) = \frac{\vert am - bn \vert}{w}.
    \end{equation} 
\end{lemma}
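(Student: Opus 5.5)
The plan is to reduce the bottom-block problem to Lemma~\ref{lemma:Coin_X} by rescaling the target circle by the factor $w$. Write $n = wn'$ and $m = wm'$ with $\GCD(n',m') = 1$. Note that $am \neq bn$ is equivalent to $am' \neq bn'$. Moreover $|am'-bn'| = |am-bn|/w = k/w$.

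\textbf{Step 1: the rescaling identity.} I will show that multiplication by $w$, $y \mapsto wy \pmod 1$, restricted to $[0,\frac1w)$, carries $\bar\phi_{n,a}(s)$ bijectively onto $\phi_{n',a}(s)$ for every $s$. Indeed, multiplying each element $\frac{as+u}{n}$, $u\in\{0,\ldots,n-1\}$, by $w$ gives $\frac{as+u}{n'}$. Modulo $1$ these are exactly the $n'$ values of $\phi_{n',a}(s)$, each hit $w$ times since only $u \bmod n'$ matters.

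By Lemma~\ref{lemma:copies}, applied with $t=0,\ldots,w-1$, the full set $\phi_{n,a}(s)$ is the disjoint union of the $w$ translates $\bar\phi_{n,a}(s)+\frac tw$. Since $\phi_{n,a}(s)$ has exactly $n$ elements, $\bar\phi_{n,a}(s)$ has exactly $n' = n/w$ elements. The map $y\mapsto wy$ is injective on $[0,\frac1w)$. Hence it maps $\bar\phi_{n,a}(s)$ into $\phi_{n',a}(s)$, injectively, between sets of the same size $n'$, so it is a bijection. The same argument gives $w\cdot\bar\phi_{m,b}(s) = \phi_{m',b}(s)$.

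\textbf{Step 2: transfer of coincidences.} Two points $y,y'\in[0,\frac1w)$ are equal if and only if $wy\equiv wy' \pmod 1$. Therefore
\[
\bar\phi_{n,a}(s)\cap\bar\phi_{m,b}(s)\neq\emptyset \iff \phi_{n',a}(s)\cap\phi_{m',b}(s)\neq\emptyset ,
\]
that is, $\Coin_X(\bar\phi_{n,a}:\bar\phi_{m,b}) = \Coin_X(\phi_{n',a}:\phi_{m',b})$.

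\textbf{Step 3: apply Lemma~\ref{lemma:Coin_X}.} Since $\GCD(n',m')=1$ and $am'\neq bn'$, that lemma gives the set $\{0,\frac1{k'},\ldots,\frac{k'-1}{k'}\}$ with $k' = k/w$. Rewritten, this is $\{0,\frac wk,\ldots,\frac{k-w}{k}\}$, of cardinality $k/w$, as claimed.

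\textbf{Main obstacle.} The delicate point is Step 3's reliance on Lemma~\ref{lemma:Coin_X}, whose proof invokes Lemma~\ref{lemma:main} under the hypothesis $n\ge m\ge 2$. After dividing by $w$, one of $n',m'$ may equal $1$, and we also need to allow $n' < m'$.
\begin{itemize}
\item The order of $n'$ and $m'$ is handled by symmetry: swap the roles of the two maps.
\item If $m'=1$, the congruence $g\equiv n'd - c \pmod{n'}$ is solved directly with $d=0$ and $c \equiv -g \pmod{n'}$. So the conclusion of Lemma~\ref{lemma:main}, and hence of Lemma~\ref{lemma:Coin_X}, still holds.
\end{itemize}
I would state this small extension explicitly before invoking Lemma~\ref{lemma:Coin_X}.
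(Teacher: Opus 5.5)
Your proof is correct, but it is organized differently from the paper's.

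The paper stays at the level of the congruence. With $n=wn_0$, $m=wm_0$, it divides $(am-bn)s+(um-vn)=qnm$ by $w$ and reads off the necessary condition $(am_0-bn_0)s\in\mathbb Z$. It then invokes Lemma~\ref{lemma:main} for $(n_0,m_0)$ to produce $u,v$ for each such $s$.

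You instead conjugate by the scaling $y\mapsto wy$. For each $s$, this identifies the bottom-block pair $(\bar\phi_{n,a},\bar\phi_{m,b})$ with the genuine power-map pair $(\phi_{n',a},\phi_{m',b})$, and you then use Lemma~\ref{lemma:Coin_X} as a black box. The arithmetic underneath is the same in both routes: Lemma~\ref{lemma:main} for a coprime pair.

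What your reduction buys is that the bottom-block condition is handled cleanly, which the paper's computation passes over quickly:
\begin{itemize}
\item Restricting to $0\le u<n_0$ does not in general put $\frac{as+u}{n}$ in $[0,\frac1w)$. Take $n=4$, $w=2$, $a=3$, $s=0.9$: then $u=0,1$ give $0.675$ and $0.925$.
\item Lemma~\ref{lemma:main} for $(n_0,m_0)$ only gives the congruence modulo $n_0m_0$, not modulo $wn_0m_0$ as in \eqref{eq:bottomcoin2}.
\end{itemize}
A congruence modulo $n_0m_0$ says exactly that the $w$-multiples of the two points agree mod $1$. Your Steps~1--2 are precisely what turns this into an honest coincidence inside $[0,\frac1w)$. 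The paper's route, in turn, gets the necessary condition in one line, without the block-counting via Lemma~\ref{lemma:copies}.

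The obstacle you flag is not a real one. The proof of Lemma~\ref{lemma:main} (B\'ezout plus remainders) never uses $n\ge m\ge 2$ and works verbatim for any coprime $n,m\ge 1$. Your separate treatment of $m'=1$ is therefore harmless but unnecessary; the paper itself applies the lemma to $(n_0,m_0)$ without comment.
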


\begin{proof}
Let $w=\GCD(n,m)>1$, and write $n=wn_0$, $m=wm_0$, where $\GCD(n_0,m_0)=1$.
Recall that
\[
\bar\phi_{n,a}(s)=\phi_{n,a}(s)\cap\Big[0,\frac1w\Big),\qquad
\bar\phi_{m,b}(s)=\phi_{m,b}(s)\cap\Big[0,\frac1w\Big),
\]
where we regard $S^1=\R/\Z$ and identify points with their representatives in $[0,1)$.

Fix $s\in[0,1)$. By definition,
$s\in \Coin_X(\bar\phi_{n,a}:\bar\phi_{m,b})$ if and only if there exist
$u\in\{0,1,\ldots,n-1\}$ and $v\in\{0,1,\ldots,m-1\}$ such that
\begin{equation}\label{eq:bottomcoin1}
\frac{a}{n}s+\frac{u}{n}\equiv \frac{b}{m}s+\frac{v}{m}\pmod 1
\quad\text{and}\quad
\Big(\frac{a}{n}s+\frac{u}{n}\Big)\bmod 1 \in \Big[0,\frac1w\Big).
\end{equation}
The second condition forces the chosen representatives to lie in $[0,\frac{1}{w})$.
Equivalently, we may (and do) choose $u$ and $v$ so that the corresponding
points lie in the bottom block; in particular we may assume
\begin{equation}\label{eq:uvranges}
0\le u \le n_0-1,\qquad 0\le v \le m_0-1,
\end{equation}
since adding $n_0$ to $u$ (resp.\ $m_0$ to $v$) shifts the value by $\frac1w$ and
moves it to a different vertical block (cf.\ Lemma~\ref{lemma:copies}).

From the congruence in \eqref{eq:bottomcoin1} there exists $q\in \mathbb Z$ such that
\[
\frac{a}{n}s+\frac{u}{n}-\Big(\frac{b}{m}s+\frac{v}{m}\Big)=q.
\]
Multiplying by $nm$ gives
\[
(am-bn)s+(um-vn)=qnm.
\]
Dividing by $w$ (using $n=wn_0$, $m=wm_0$) yields
\begin{equation}\label{eq:bottomcoin2}
(am_0-bn_0)s+(um_0-vn_0)=qwn_0m_0.
\end{equation}
In particular,
\[
(am_0-bn_0)s \in \mathbb Z.
\]

Let
\[
k_0=\lvert am_0-bn_0\rvert=\frac{\lvert am-bn\rvert}{w}.
\]
Then the condition $k_0 s\in \mathbb Z$ with $s\in[0,1)$ has exactly $k_0$ solutions, namely
\[
s=\frac{j}{k_0}=\frac{jw}{\lvert am-bn\rvert},\qquad j=0,1,\ldots,k_0-1.
\]
Thus
\[
\Coin_X(\bar\phi_{n,a}:\bar\phi_{m,b})
=\Big\{0,\frac{w}{k},\frac{2w}{k},\ldots,\frac{k-w}{k}\Big\},
\quad k=\lvert am-bn\rvert,
\]
and consequently $\#\Coin_X(\bar\phi_{n,a}:\bar\phi_{m,b})=k_0=\frac{k}{w}$.

Finally, because $\GCD(n_0,m_0)=1$, Lemma~\ref{lemma:main} (applied to $n_0,m_0$)
guarantees that for each such $s$ there exist $u,v$ (indeed with
$0\le u<n_0$, $0\le v<m_0$ as in \eqref{eq:uvranges}) satisfying
\eqref{eq:bottomcoin2}, hence \eqref{eq:bottomcoin1}. This completes the proof.
\end{proof}

\noindent 
Since every domain coincidence $s \in \Coin_X(\bar\phi_{n,a} : \bar\phi_{m,b})$ corresponds to a unique graph intersection, the following corollary is immediate.

\begin{corollary}
           If $am \neq bn$ and $\GCD(n,m) = w \neq 1$, then
    \begin{equation}
        \# \Coin_{X \times Y}(\bar\phi_{n,a} : \bar\phi_{m,b}) = \frac{\vert am - bn \vert}{w}.
    \end{equation}         
\end{corollary}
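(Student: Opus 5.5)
The plan is to reduce the count of graph intersection points of the restricted maps $\bar\phi_{n,a}$, $\bar\phi_{m,b}$ to the count of domain coincidences from Lemma~\ref{lemma:CoinXbottom}. Since Proposition~\ref{MCXleMCXY} already gives $\#\Coin_X(\bar\phi_{n,a}:\bar\phi_{m,b}) \leqslant \#\Coin_{X\times Y}(\bar\phi_{n,a}:\bar\phi_{m,b})$, the lower bound $\frac{|am-bn|}{w}$ is immediate. The remaining task is to show that each domain coincidence $s=\frac{jw}{k}$ carries exactly one graph intersection point $(s,y)$ with $y\in[0,\frac1w)$.

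For the upper bound, I would fix such an $s$ and suppose $y\in\bar\phi_{n,a}(s)\cap\bar\phi_{m,b}(s)$. As in the proof of Lemma~\ref{lemma:CoinXbottom}, write $y \equiv \frac{a}{n}s+\frac{u}{n} \equiv \frac{b}{m}s+\frac{v}{m} \pmod 1$. Subtracting, and using $n=wn_0$, $m=wm_0$, this forces the relation \eqref{eq:bottomcoin2}:
\[(am_0-bn_0)s + (um_0 - vn_0) \equiv 0 \pmod{wn_0m_0}.\]
Here $(am_0-bn_0)s$ is a fixed integer $g$ determined by $s$. Two solutions $(u,v)$, $(u',v')$ would then satisfy $(u-u')m_0 \equiv (v-v')n_0 \pmod{wn_0m_0}$, hence certainly modulo $n_0$ and modulo $m_0$. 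Because $\GCD(n_0,m_0)=1$, this gives $u\equiv u' \pmod{n_0}$ and $v\equiv v'\pmod{m_0}$.

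By Lemma~\ref{lemma:copies}, changing $u$ by a multiple of $n_0$ shifts the point by a multiple of $\frac1w$, i.e.\ moves it to another vertical block. Within the bottom block $[0,\frac1w)$ the value $y$ is therefore uniquely determined. So each $s$ contributes at most one point $(s,y)$.

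The main obstacle is the bookkeeping of representatives. The bottom-block condition restricts $y \bmod 1$ rather than $u$ itself, and for a given $s$ the $n$ values $\frac{a}{n}s+\frac{u}{n} \bmod 1$ are equally spaced by $\frac1n$. Exactly $n_0$ of them lie in $[0,\frac1w)$, and these differ by multiples of $\frac1n$ that are less than $\frac1w$, so they lie in distinct classes of $u$ modulo $n_0$. I would verify this carefully so that the congruence $u\equiv u'\pmod{n_0}$ really forces $y=y'$. Existence of the point for each $s$ follows from Lemma~\ref{lemma:CoinXbottom}. Combining existence with uniqueness yields the equality $\#\Coin_{X\times Y}(\bar\phi_{n,a}:\bar\phi_{m,b})=\frac{|am-bn|}{w}$.
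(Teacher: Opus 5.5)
Your proposal is correct and follows the paper's route: the paper deduces the corollary from Lemma~\ref{lemma:CoinXbottom} by asserting that each domain coincidence in the bottom block carries a unique graph intersection point. Your congruence argument ($u\equiv u'\pmod{n_0}$, hence $y-y'\in\frac1w\mathbb{Z}$, hence $y=y'$ inside $[0,\frac1w)$) supplies the justification for that uniqueness, which the paper leaves implicit.
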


Since $\Coin_X$ is invariant under simultaneous translation, we have
\[
\Coin_X\!\left(\bar\phi_{n,a}+\frac{t}{w}:\bar\phi_{m,b}+\frac{t}{w}\right)
=
\Coin_X(\bar\phi_{n,a}:\bar\phi_{m,b}),
\]
and thus we obtain:

\begin{corollary} \label{cor:Coin_X}
             If $am \neq bn$ and $\GCD(n,m) \neq 1$, then
        \begin{equation}
        \# \Coin_X(\phi_{n,a} : \phi_{m,b}) = \frac{\vert am - bn \vert}{\GCD(n,m)}.
    \end{equation}  
\end{corollary}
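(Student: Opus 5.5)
The plan is to reduce the count for $\phi_{n,a},\phi_{m,b}$ to the bottom-block count of Lemma~\ref{lemma:CoinXbottom}, using the vertical periodicity of Lemma~\ref{lemma:copies}. Write $w=\GCD(n,m)$, $n=wn_0$, $m=wm_0$, and $k=|am-bn|$. I will prove the set equality
\[
\Coin_X(\phi_{n,a}:\phi_{m,b})=\Coin_X(\bar\phi_{n,a}:\bar\phi_{m,b})=\Big\{0,\frac{w}{k},\ldots,\frac{k-w}{k}\Big\},
\]
from which the cardinality $k/w$ follows immediately from Lemma~\ref{lemma:CoinXbottom}.

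The inclusion $\supseteq$ is trivial. Since $\bar\phi_{n,a}(s)\subseteq\phi_{n,a}(s)$ and $\bar\phi_{m,b}(s)\subseteq\phi_{m,b}(s)$, any common value of the bottom blocks is a common value of the full maps.

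For $\subseteq$, take $s$ and a common value $y\in\phi_{n,a}(s)\cap\phi_{m,b}(s)$. Since $y\in[0,1)$, there is $t\in\{0,\ldots,w-1\}$ with $y-\frac{t}{w}\in[0,\frac1w)$. Because $w\mid n$ and $w\mid m$, Lemma~\ref{lemma:copies} applies to both maps. Hence $y-\frac tw \pmod 1$ lies in both $\phi_{n,a}(s)$ and $\phi_{m,b}(s)$, and by the choice of $t$ it lies in $[0,\frac1w)$. Therefore it is a common value of $\bar\phi_{n,a}(s)$ and $\bar\phi_{m,b}(s)$, so $s\in\Coin_X(\bar\phi_{n,a}:\bar\phi_{m,b})$. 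This is the content of the translation-invariance remark preceding the statement, made precise: every graph intersection lies in exactly one block $[\frac tw,\frac{t+1}{w})$ and can be translated down to the bottom block without changing its domain coordinate.

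As a sanity check, I can also argue directly. A coincidence at $s$ means $(am-bn)s\equiv vn-um\pmod{nm}$ for some $u,v$. The right-hand side is always a multiple of $w$, so $ks\in w\mathbb Z$, which gives exactly the points $s=jw/k$. Lemma~\ref{lemma:main}, applied to $n_0,m_0$, shows each such point is attained.

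The main obstacle I anticipate is bookkeeping rather than ideas. I must check that the shift in Lemma~\ref{lemma:copies} is taken with the same $t/w$ for both maps simultaneously, which is legitimate since $w$ divides both $n$ and $m$. I must also handle the degenerate case $n_0=1$ or $m_0=1$, where Lemma~\ref{lemma:main} as stated assumes $n\ge m\ge2$. In that case the congruence $(am_0-bn_0)s\equiv vn_0-um_0 \pmod{n_0m_0}$ is solvable trivially, since one of $n_0,m_0$ equals $1$, so I would dispatch it separately.
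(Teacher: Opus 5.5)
Your proposal is correct and follows the paper's route. The paper also reduces to the bottom blocks via Lemma~\ref{lemma:copies} and the translation invariance $\Coin_X(\bar\phi_{n,a}+\frac tw:\bar\phi_{m,b}+\frac tw)=\Coin_X(\bar\phi_{n,a}:\bar\phi_{m,b})$, then reads off the count from Lemma~\ref{lemma:CoinXbottom}; your two-inclusion argument, which shifts a common value $y$ down by the same $\frac tw$ in both maps, is a precise version of that one-line remark. Your direct check is also sound, and it makes the degenerate case unnecessary: $n\mathbb Z+m\mathbb Z=w\mathbb Z$ already shows that every $s=\frac{jw}{k}$ is attained, without invoking Lemma~\ref{lemma:main}.
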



Now we consider the set $\Coin_{X \times Y}(\phi_{n,a} : \phi_{m,b})$.

\begin{lemma} \label{lemma:Coin_XY}
If $am \neq bn$ and $\GCD(n,m) = w \neq 1$, then
    \begin{equation}
        \# \Coin_{X \times Y}(\phi_{n,a} : \phi_{m,b}) = \frac{\vert am - bn \vert}{w} \cdot w = \vert am - bn \vert.
    \end{equation}        
\end{lemma}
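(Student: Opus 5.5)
The plan is to decompose the graph intersection set of $(\phi_{n,a},\phi_{m,b})$ according to the $w$ vertical blocks provided by Lemma~\ref{lemma:copies}. Write $w=\GCD(n,m)$ and, for $t\in\{0,1,\ldots,w-1\}$, let $B_t=S^1\times[\frac{t}{w},\frac{t+1}{w})$ be the $t$-th horizontal strip of the torus $S^1\times S^1$. The strips are pairwise disjoint and cover $S^1\times S^1$. Hence
\[
\#\Coin_{X\times Y}(\phi_{n,a}:\phi_{m,b})=\sum_{t=0}^{w-1}\#\big(\Coin_{X\times Y}(\phi_{n,a}:\phi_{m,b})\cap B_t\big).
\]
The intersection with $B_0$ is exactly $\Coin_{X\times Y}(\bar\phi_{n,a}:\bar\phi_{m,b})$. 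By the corollary following Lemma~\ref{lemma:CoinXbottom}, this set has $\frac{|am-bn|}{w}$ elements.

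Next we show that every strip contains the same number of points. Since $w\mid n$ and $w\mid m$, Lemma~\ref{lemma:copies} gives, for every $s$,
\[
\phi_{n,a}(s)+\tfrac{t}{w}=\phi_{n,a}(s),\qquad \phi_{m,b}(s)+\tfrac{t}{w}=\phi_{m,b}(s)\pmod 1.
\]
Consequently the vertical translation $\tau_t(x,y)=(x,y+\frac{t}{w})$ of the torus maps $\Coin_{X\times Y}(\phi_{n,a}:\phi_{m,b})$ bijectively onto itself, since $y\in\phi_{n,a}(x)\cap\phi_{m,b}(x)$ holds if and only if $y+\frac tw$ lies in both sets. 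This translation also carries $B_0$ onto $B_t$. Therefore each strip contains exactly $\frac{|am-bn|}{w}$ points, and summing over the $w$ strips gives $\frac{|am-bn|}{w}\cdot w=|am-bn|$.

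The one step that needs care is the count in $B_0$: it must be a genuine count of graph intersection points, not of domain points. The corollary after Lemma~\ref{lemma:CoinXbottom} asserts that each domain coincidence $s$ of the bottom blocks gives exactly one pair $(s,y)$. To confirm this, I would check uniqueness of $(u,v)$ with $0\le u<n_0$ and $0\le v<m_0$ in \eqref{eq:bottomcoin2}. For fixed $s$ the value $um_0-vn_0$ is determined modulo $n_0m_0$. Since $\GCD(n_0,m_0)=1$, the map $(u,v)\mapsto um_0-vn_0 \bmod n_0m_0$ is injective on this range of $n_0m_0$ pairs, by the Chinese remainder theorem. So $y$ is unique and the count is $\frac{|am-bn|}{w}$, as required.
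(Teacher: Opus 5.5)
Your proof is correct and follows the same route as the paper: split the torus into the $w$ horizontal blocks, observe that graph intersections cannot pair points from different blocks, and use the translation invariance of Lemma~\ref{lemma:copies} to see that each block contributes $\frac{|am-bn|}{w}$. The translation $\tau_t$ and the CRT uniqueness check make explicit what the paper leaves implicit. The check does not need the admissible $u$, $v$ to be literally $0,\dots,n_0-1$ and $0,\dots,m_0-1$; it only needs them to form complete residue systems modulo $n_0$ and $m_0$, which they always do.
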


\begin{proof}
We start with a simple observation. 

\begin{equation*}
    \Coin_{X \times Y}\Big(\bar\phi_{n,a} + \frac{j}{w} \colon \bar\phi_{m,b}+ \frac{l}{w}\Big) = \emptyset,
\end{equation*}
for $j \neq l$, $j,l \in \{0,1,\ldots,w-1\}$. This is obvious because the second coordinates of both maps will never be equal. We have $\Coin_{X \times Y}\Big(\bar\phi_{n,a} + \frac{j}{w} \colon \bar\phi_{m,b} + \frac{l}{w}\Big) \neq \emptyset$ if and only if $j = l$. So for every $t \in \{0,1,\ldots,w-1\}$, we have
\begin{equation*}
    \# \Coin_{X \times Y}\Big(\bar\phi_{n,a} + \frac{t}{w} \colon \bar\phi_{m,b} + \frac{t}{w}\Big) = \frac{\vert am - bn \vert}{w}.
\end{equation*}
As there are $w$ different choices for $t$, we obtain the final result.
\end{proof}

Combining the results for $\GCD(n,m) = 1$ and $\GCD(n,m) \neq 1$, we obtain the following theorems.

\begin{theorem}\label{th:numberofcoinx}
         If $am \neq bn$, then
    \begin{equation}
        \Coin_X(\phi_{n,a} : \phi_{m,b}) = \Big\{0, \frac{w}{k}, \frac{2w}{k}, \ldots, \frac{k-w}{k}  \Big\},
    \end{equation}
    where $k = \vert am - bn \vert$, $w = \GCD(n,m)$. Thus
        \begin{equation}
        \# \Coin_X(\phi_{n,a} : \phi_{m,b}) = \frac{\vert am - bn \vert}{\GCD(n,m)}.
    \end{equation}  
\end{theorem}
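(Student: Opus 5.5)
The plan is to split by the value of $w=\GCD(n,m)$ and assemble the theorem from results already established in this section. In both cases I will use the same basic reduction. A point $s\in[0,1)$ lies in $\Coin_X(\phi_{n,a}:\phi_{m,b})$ if and only if there are $u\in\{0,\ldots,n-1\}$ and $v\in\{0,\ldots,m-1\}$ with
\[
(am-bn)s\equiv vn-um \pmod{nm},
\]
as in the proof of Lemma~\ref{lemma:Coin_X}.

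The case $w=1$ is exactly Lemma~\ref{lemma:Coin_X}. There the set $\{0,\frac{w}{k},\ldots,\frac{k-w}{k}\}$ reduces to $\{0,\frac1k,\ldots,\frac{k-1}{k}\}$ and the count is $|am-bn|=|am-bn|/\GCD(n,m)$. The lemma needs $n,m\ge 2$ through Lemma~\ref{lemma:main}. If $n=1$ or $m=1$, I would check Lemma~\ref{lemma:main} directly: with $n=1$ one takes $c=0$ and $d\equiv g \pmod m$, which is trivial.

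For $w>1$, write $n=wn_0$ and $m=wm_0$. I will show two inclusions.

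\emph{The coincidence set is contained in $\{j/k_0\}$.} Any solution $(s,u,v)$ of the congruence satisfies $(am-bn)s\in w\Z$, because $vn-um$ and $nm$ are both divisible by $w$. Hence $k_0s\in\Z$ with $k_0=k/w$. So $\Coin_X(\phi_{n,a}:\phi_{m,b})\subseteq\{0,\frac{w}{k},\ldots,\frac{k-w}{k}\}$.

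\emph{Every $j/k_0$ is a coincidence point.} For the reverse inclusion, Lemma~\ref{lemma:CoinXbottom} already shows that each $s=j/k_0$ is a coincidence of the bottom blocks $\bar\phi_{n,a},\bar\phi_{m,b}$. Bottom-block coincidences are in particular coincidences of $\phi_{n,a}$ and $\phi_{m,b}$.

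Combining the two inclusions gives the set equality, and the cardinality $k/w$ follows. This is the content of Corollary~\ref{cor:Coin_X}, whose translation argument via Lemma~\ref{lemma:copies} is the justification I would cite. Lemma~\ref{lemma:copies} says that $\phi_{n,a}$ and $\phi_{m,b}$ are each the union of $w$ vertical translates by $t/w$ of their bottom blocks. A coincidence in block $t$ is therefore, after translating by $-t/w$, a coincidence of the bottom blocks at the same $s$. So $\Coin_X$ of the full pair equals $\Coin_X$ of the bottom pair.

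The step needing the most care is the containment $\Coin_X(\phi_{n,a}:\phi_{m,b})\subseteq\Coin_X(\bar\phi_{n,a}:\bar\phi_{m,b})$. One must confirm that a coinciding value $y\in\phi_{n,a}(s)\cap\phi_{m,b}(s)$ lying in the block $[t/w,(t+1)/w)$ comes from elements of both maps in that same block. This holds trivially because they are the same point $y$. One must also confirm that subtracting $t/w$ keeps both values in the respective bottom blocks, which follows from Lemma~\ref{lemma:copies} applied separately to $n$ and $m$, since $w$ divides both.

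Alternatively, the divisibility argument of the first inclusion already gives this containment directly. I would present that version as the primary argument, because it avoids block bookkeeping entirely.
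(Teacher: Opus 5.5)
Your proposal is correct and follows the paper's own route. The paper proves the theorem by combining Lemma~\ref{lemma:Coin_X} for $\GCD(n,m)=1$ with Lemma~\ref{lemma:CoinXbottom} and the block-translation argument behind Corollary~\ref{cor:Coin_X} for $\GCD(n,m)>1$; your divisibility argument for $\Coin_X(\phi_{n,a}:\phi_{m,b})\subseteq\{0,\frac{w}{k},\ldots,\frac{k-w}{k}\}$ (since $w$ divides both $vn-um$ and $nm$) is a tidy shortcut for that inclusion, and your handling of $n=1$ or $m=1$ in Lemma~\ref{lemma:main} covers a case the paper leaves implicit.
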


\begin{proof}
    It follows from Lemma~\ref{lemma:Coin_X} and Corollary~\ref{cor:Coin_X}
\end{proof}


\begin{theorem}
\label{th:numberofcoinxy}
         If $am \neq bn$, then
    \begin{equation}
        \# \Coin_{X \times Y}(\phi_{n,a} : \phi_{m,b}) = \vert am - bn \vert.
    \end{equation}  
\end{theorem}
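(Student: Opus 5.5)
The proof splits according to $w=\GCD(n,m)$, and in each case the count is already available from earlier results. If $w=1$, Corollary~\ref{cor:Coin_XY} gives $\#\Coin_{X\times Y}(\phi_{n,a}:\phi_{m,b})=|am-bn|$ directly. If $w>1$, Lemma~\ref{lemma:Coin_XY} gives the same value. So the theorem will follow by combining these two results, in the same way Theorem~\ref{th:numberofcoinx} was assembled from Lemma~\ref{lemma:Coin_X} and Corollary~\ref{cor:Coin_X}.

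The substantive content sits in the case $w>1$, and I would check its ingredients in the following order.

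\begin{enumerate}
\item By Lemma~\ref{lemma:copies}, applied with $w\mid n$ and $w\mid m$, the graphs of $\phi_{n,a}$ and $\phi_{m,b}$ decompose into the $w$ vertical blocks $\bar\phi_{n,a}+\frac{t}{w}$ and $\bar\phi_{m,b}+\frac{t}{w}$, for $t\in\{0,\dots,w-1\}$.
\item A graph intersection point $(x,y)$ has a second coordinate $y$ lying in exactly one of the strips $[\frac{t}{w},\frac{t+1}{w})$. Hence $\Coin_{X\times Y}$ is the disjoint union over $t$ of the intersection sets of the $t$-th blocks, and blocks with different indices cannot meet.
\item Translating by $\frac{t}{w}$ is a bijection carrying the $t$-th block pair to the bottom block pair. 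So each summand has cardinality $\#\Coin_{X\times Y}(\bar\phi_{n,a}:\bar\phi_{m,b})=\frac{|am-bn|}{w}$, by the corollary to Lemma~\ref{lemma:CoinXbottom}.
\end{enumerate}

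Summing the $w$ equal summands gives $|am-bn|$.

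The main point requiring care is the claim, used in both cases, that each domain coincidence $s$ gives exactly one graph intersection point. For $w=1$ I would verify this directly. If $(u,v)$ and $(u',v')$ both solve \eqref{eqn:maincong} for the same $s$, subtracting gives $(v-v')n\equiv(u-u')m \pmod{nm}$. Since $\GCD(n,m)=1$, this forces $m\mid v-v'$ and $n\mid u-u'$, so $(u,v)=(u',v')$ within the allowed ranges.

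For the bottom blocks the same argument applies with $n_0,m_0$ in place of $n,m$, using the restriction \eqref{eq:uvranges}. One must also check that the value $\frac{a}{n}s+\frac{u}{n} \bmod 1$ actually lands in $[0,\frac1w)$ for the solution supplied by Lemma~\ref{lemma:main}. I would handle this by noting that shifting $u$ by $n_0$ and $v$ by $m_0$ simultaneously preserves the congruence. This lets one adjust the block index so that the point lies in the bottom block, and exactly one such choice exists.
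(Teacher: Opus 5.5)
Your top-level argument is the paper's own proof: the theorem is Corollary~\ref{cor:Coin_XY} for $w=1$ combined with Lemma~\ref{lemma:Coin_XY} for $w>1$. Your uniqueness check for $w=1$ is also correct: you subtract two solutions of \eqref{eqn:maincong} and use $\GCD(n,m)=1$. This fills in a step the paper leaves implicit.

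The gap is in your verification for the bottom blocks. That step is taken from the proof of Lemma~\ref{lemma:CoinXbottom}, and it does not hold as stated. Equation~\eqref{eq:bottomcoin2} is a congruence modulo $wn_0m_0$. Lemma~\ref{lemma:main} applied to $(n_0,m_0)$ only gives $c,d$ in the box \eqref{eq:uvranges} with $(am_0-bn_0)s\equiv n_0d-m_0c \pmod{n_0m_0}$. Such a pair need not solve \eqref{eq:bottomcoin2}. The simultaneous shift $(u,v)\mapsto(u+n_0,v+m_0)$ cannot repair this, because it leaves $um_0-vn_0$ unchanged.

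For example, take $n=m=2$, $a=1$, $b=3$, so $w=2$ and $n_0=m_0=1$, and look at $s=\tfrac12$. The only box pair $(0,0)$ is not a solution. The two graph intersection points over this $s$ come from $(u,v)=(0,1)$, giving $y=\tfrac14$ in the bottom block, and from $(1,0)$, giving $y=\tfrac34$. So \eqref{eq:uvranges} is not a valid normalization. Uniqueness inside that box tells you nothing about the pairs that actually land in $[0,\tfrac1w)$.

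The repair is to let $u$ range over $\mathbb{Z}/n$ and $v$ over $\mathbb{Z}/m$ throughout. The map $(u,v)\mapsto vn_0-um_0$ from $\mathbb{Z}/n\times\mathbb{Z}/m$ to $\mathbb{Z}/(wn_0m_0)$ is a well-defined homomorphism. It is onto because $\GCD(n_0,m_0)=1$. Its kernel is exactly $\{(tn_0,tm_0) : 0\le t<w\}$. This is your subtraction argument, but carried out modulo $wn_0m_0$ instead of $n_0m_0$: then $u-u'=tn_0$ forces $v-v'\equiv tm_0 \pmod m$ with the same $t$.

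Now take any of the $k_0=|am-bn|/w$ values $s$ with $(am_0-bn_0)s\in\mathbb{Z}$. Over it there are exactly $w$ graph intersection points, differing by vertical shifts of $t/w$, and exactly one lies in $[0,\tfrac1w)$. This proves the corollary to Lemma~\ref{lemma:CoinXbottom}. It also gives the total $k_0\cdot w=|am-bn|$ directly, without the block decomposition.
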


\begin{proof}
    It follows from Corollary~\ref{cor:Coin_XY} and Lemma~\ref{lemma:Coin_XY}.
\end{proof}


\section{The geometric Nielsen coincidence number}\label{sec:minimum}

We now define the geometric Nielsen coincidence number, which gives a natural homotopy-invariant lower bound for the minimal number of graph intersection points and serves as the geometric starting point for our later considerations.

Assume that $f,g \colon X \multimap Y$ are an $(n,m)$-valued pair of maps of connected finite polyhedra. We define a relation called \textit{graph Nielsen relation} in the set $\Coin_{X \times Y}(f:g)$ as follows: the points $(x,y), (x',y')$ are equivalent graph intersection points if there is a path $p \colon [0,1] \to X$ such that $p(0) = x,~~p(1) = x'$ and for the splittings $f \circ p = \{f_1,\ldots,f_n\}$ and $g \circ p  = \{g_1,\ldots,g_m\}$, there exist $1 \leqslant j \leqslant n$ and $1 \leqslant k \leqslant m$ such that $f_j(0) = g_k(0) = y$, $f_j(1) = g_k(1) = y'$ and the paths $f_j, g_k \colon [0,1] \to Y$ are homotopic relative to the endpoints.

\begin{lemma}
The  graph Nielsen relation  is an equivalence relation.
\end{lemma}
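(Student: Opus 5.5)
We verify reflexivity, symmetry and transitivity directly from the definition, using the fact that for a path $p\colon[0,1]\to X$ the multimaps $f\circ p$ and $g\circ p$ are split (the interval is simply connected, so Schirmer's splitting lemma applies). A preliminary remark is needed: the splitting of $f\circ p$ is unique up to relabeling. If $\{f_1,\dots,f_n\}$ and $\{f_1',\dots,f_n'\}$ both split $f\circ p$, then for each $j$ the set $\{t: f_j(t)=f'_i(t)\}$ is closed. Because the $n$ values are distinct and the maps are continuous, this set is also open. By connectedness of $[0,1]$, each $f_j$ therefore coincides with some $f'_i$. Consequently the condition in the definition (existence of branches $f_j,g_k$ with prescribed endpoints and homotopic rel endpoints) depends only on the path $p$, not on the chosen splitting.

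\emph{Reflexivity.} For $(x,y)\in\Coin_{X\times Y}(f:g)$ take the constant path $p\equiv x$. The splittings of $f\circ p$ and $g\circ p$ consist of constant paths. Since $y\in f(x)\cap g(x)$, there are branches $f_j\equiv y$ and $g_k\equiv y$, and these are equal, hence homotopic rel endpoints.

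\emph{Symmetry.} Suppose $p$, $f_j$, $g_k$ witness $(x,y)\sim(x',y')$. Use the reversed path $\bar p(t)=p(1-t)$. Then $\{f_i(1-t)\}$ and $\{g_l(1-t)\}$ split $f\circ\bar p$ and $g\circ\bar p$. The reversed branches $\bar f_j$ and $\bar g_k$ start at $y'$, end at $y$, and remain homotopic rel endpoints, because reversing a homotopy rel endpoints gives one.

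\emph{Transitivity.} Suppose $p,f_j,g_k$ witness $(x,y)\sim(x',y')$ and $q,f'_{j'},g'_{k'}$ witness $(x',y')\sim(x'',y'')$. Consider the concatenation $p\cdot q$ and build a splitting of $f\circ(p\cdot q)$ by gluing. The points $f_1(1),\dots,f_n(1)$ and $f'_1(0),\dots,f'_n(0)$ both enumerate the $n$-element set $f(x')$, so there is a permutation $\sigma$ with $f'_{\sigma(i)}(0)=f_i(1)$. The concatenations $f_i\cdot f'_{\sigma(i)}$ are continuous, pointwise distinct, and together give $f\circ(p\cdot q)$, hence they form a splitting. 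Do the same for $g$. Since $f_j(1)=y'=f'_{j'}(0)$ and the values are distinct, we get $\sigma(j)=j'$, and similarly for $g$. So $f_j\cdot f'_{j'}$ and $g_k\cdot g'_{k'}$ are branches of the glued splittings, running from $y$ to $y''$. Concatenating the two homotopies rel endpoints shows they are homotopic rel endpoints. By the preliminary remark, the choice of splitting does not matter, so $p\cdot q$ witnesses $(x,y)\sim(x'',y'')$.

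The main point needing care is the preliminary uniqueness of splittings along a path, together with the gluing of splittings over concatenated paths. Everything else is standard path algebra.
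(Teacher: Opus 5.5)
Your proof is correct and follows the same route as the paper: reflexivity and symmetry are routine, and transitivity comes from concatenating the witnessing paths and the matching branches $f_j\cdot f'_{j'}$, $g_k\cdot g'_{k'}$, then concatenating the rel-endpoint homotopies. Your remark that splittings along a path are unique up to relabeling, and your explicit permutation $\sigma$ gluing the branches at the junction, supply details the paper leaves implicit; the paper simply asserts that $f^1_i * f^2_j$ appears in a splitting of $f\circ p_3$.
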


\begin{proof}
    The reflexivity and symmetry are obvious. We will show the transitivity. Assume that points $(x_1,y_1)$, $(x_2,y_2)$ and $(x_2,y_2)$, $(x_3,y_3)$ are in the relation. Let $p_1 \colon [0,1] \to X$ be the path from $x_1$ to $x_2$ which satisfies the definition of the relation, and $p_2 \colon [0,1] \to X$ be the analogous path from $x_2$ to $x_3$. We  define the path $p_3 \colon [0,1] \to X$ in the standard way as $p_3=p_1 * p_2$, where $*$ represents path concatenation. 

Since $(x_1,y_1)$, $(x_2,y_2)$ and $(x_2,y_2)$, $(x_3,y_3)$ are related, we obtain maps $f^1_{i} \colon [0,1] \to Y$ from the splitting of $f \circ p_1$, and $f^2_{j}\colon [0,1] \to Y$ from the splitting of $f \circ p_2$. These maps will satisfy $f^1_{i}(1) = y_2 = f^2_{j} (0)$. 

Now let $f^3_k =  f^1_i * f^2_j$, and observe that this map appears in a splitting of $f\circ p_3$.  Analogously there exists $g^3_{{\bar k}} \colon [0,1] \to Y$ given by $g^3_{{\bar k}} = g^1_{\bar i} * g^2_{\bar j}$. The paths $f^3_{k},g^3_{{\bar k}}$ are homotopic relative to the set $\{x_1, x_2, x_3\}$, so also to the endpoints, and so $(x_1,y_1), (x_3,y_3)$ are in the relation. Thus the transitivity is satisfied.
\end{proof} 

The corresponding equivalence classes are called the \textit{graph intersection classes} of $f$ and $g$.  They are finite in number because $X \times Y$ is compact.

The next result corresponds to Lemma 6.2 from \cite{Schirmer1} in respect to coincidences of $(n,m)$-valued pairs of maps. It is a modification (using graph intersections instead of domain coincidences) of Lemma 2.1 from \cite{Brown1}.

\begin{lemma} \label{lemma:classeshomotopy}
    Let $\Phi, \Psi \colon X \times [0,1] \multimap Y$ be an $(n,m)$-valued pair of homotopies. The intersection $C_t$ of a graph intersection class $C$ of $\Phi$ and $\Psi$ with $X \times \{t\} \times Y$, where $t \in [0,1]$, is either empty or a graph intersection class of $\Phi_t$ and $\Psi_t$. Each graph intersection class of $\Phi_t$ and $\Psi_t$ is contained in a unique graph intersection class of $\Phi$ and $\Psi$.
\end{lemma}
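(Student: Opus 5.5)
Write $\Coin(\Phi:\Psi)\subseteq X\times[0,1]\times Y$ for the graph intersection set of the pair of homotopies, regarded as an $(n,m)$-valued pair on $X\times[0,1]$. Then $C_t=C\cap (X\times\{t\}\times Y)$ is naturally identified with a subset of $\Coin_{X\times Y}(\Phi_t:\Psi_t)$. I will prove the statement by comparing the graph Nielsen relation of $(\Phi_t,\Psi_t)$ on $X$ with that of $(\Phi,\Psi)$ on $X\times[0,1]$. Two facts are needed.

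(A) Two points of $\Coin_{X\times Y}(\Phi_t:\Psi_t)$ that are equivalent for $(\Phi_t,\Psi_t)$ are equivalent for $(\Phi,\Psi)$.

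(B) Two points of $C_t$, i.e.\ points equivalent for $(\Phi,\Psi)$ and lying at level $t$, are equivalent for $(\Phi_t,\Psi_t)$.

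Given (A) and (B), the lemma follows. Every point of $\Coin(\Phi_t:\Psi_t)$ lies in some class $C$ of $(\Phi,\Psi)$. By (A), its whole $(\Phi_t,\Psi_t)$-class lies in $C$, and hence in $C_t$. By (B), $C_t$ lies in a single $(\Phi_t,\Psi_t)$-class. So $C_t$, if nonempty, is exactly one class. Uniqueness of the class of $(\Phi,\Psi)$ containing a given class of $(\Phi_t,\Psi_t)$ is automatic, since equivalence classes are disjoint.

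\emph{Proof of (A).} Let $p$ be a path in $X$ witnessing the equivalence for $(\Phi_t,\Psi_t)$. Consider the path $\hat p(s)=(p(s),t)$ in $X\times[0,1]$. Then $\Phi\circ\hat p=\Phi_t\circ p$ and $\Psi\circ\hat p=\Psi_t\circ p$. The same splittings, and the same components $f_j,g_k$ with the same endpoint homotopy, therefore witness the relation for $(\Phi,\Psi)$.

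\emph{Proof of (B).} This is the main step. Let $\hat p=(p,\tau)$ be a path in $X\times[0,1]$ from $(x,t)$ to $(x',t)$. Suppose $\Phi\circ\hat p$ and $\Psi\circ\hat p$ have components $F,G$ with $F(0)=G(0)=y$, $F(1)=G(1)=y'$ and $F\simeq G$ rel endpoints. I take $p$ itself as the candidate witness for $(\Phi_t,\Psi_t)$.

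1. The homotopy $\sigma\mapsto(p(s),(1-\sigma)\tau(s)+\sigma t)$ deforms $\hat p$ to $(p,t)$ rel endpoints. Its image is a map of the square $[0,1]^2\to X\times[0,1]$.

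2. The square is simply connected, so by Schirmer's splitting lemma (quoted in Section~\ref{sec:nv}) the composites of $\Phi$ and $\Psi$ with this square split. This gives maps $\mathcal F,\mathcal G\colon[0,1]^2\to Y$ restricting on the bottom edge to $F$ and $G$.

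3. The vertical edges $s=0$ and $s=1$ are mapped to the constant points $(x,t)$ and $(x',t)$. Hence each component of the split map is constant along these edges.

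4. Restricting $\mathcal F,\mathcal G$ to the top edge gives components $f_j$ of $\Phi_t\circ p$ and $g_k$ of $\Psi_t\circ p$. By step 3 these satisfy $f_j\simeq F$ and $g_k\simeq G$ rel endpoints, and $f_j(0)=g_k(0)=y$, $f_j(1)=g_k(1)=y'$.

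5. Chaining, $f_j\simeq F\simeq G\simeq g_k$ rel endpoints. So $p$ witnesses the equivalence for $(\Phi_t,\Psi_t)$.

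The only delicate point is the use of the splitting of multimaps over the square, which holds for simply connected polyhedral domains. This is exactly the device in Schirmer's Lemma~6.2, which I would follow closely.
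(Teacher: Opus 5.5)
Your proposal is correct and takes essentially the same route as the paper. Your (A) is the paper's converse direction, using the path at level $t$. Your (B) is the paper's main direction: you deform the path rel endpoints into $X\times\{t\}$, split $\Phi$ and $\Psi$ over the resulting square via the splitting lemma, and chain the rel-endpoint homotopies $f_j\simeq F\simeq G\simeq g_k$, exactly as in the paper's concatenated map $L$.
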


\begin{proof}
    It is sufficient 
    to show that two points $((x_1,t_0),y_1), ((x_2,t_0),y_2)$ are in the same graph intersection class of $\Phi$ and $\Psi$ if and only if the points $(x_1,y_1), (x_2,y_2)$ are in the same graph intersection class of $\Phi_{t_0}$ and $\Psi_{t_0}$.

    Suppose that $((x_1,t_0),y_1), ((x_2,t_0),y_2)$ are in a graph intersection class $C$ of $\Phi$ and $\Psi$, so there is a path $p \colon [0,1] \to X \times [0,1]$ such that $p(0) = (x_1,t_0),~~p(1) = (x_2,t_0)$, and for the splittings $\Phi \circ p = \{\phi_1,\ldots,\phi_n\}$ and $\Psi \circ p = \{\psi_1,\ldots,\psi_m\}$, there exist $1 \leqslant j \leqslant n$ and $1 \leqslant k \leqslant m$ such that $\phi_j(0) = \psi_k(0) = y_1$, $\phi_j(1) = \psi_k(1) = y_2$, and the paths $\phi_j, \psi_k \colon [0,1] \to Y$ are homotopic relative to the endpoints. We order the maps from splittings so that $k = 1$ and $j = 1$, and we denote the homotopy between $\phi_1$ and $\psi_1$ as $M \colon [0,1] \times [0,1] \to Y$. 
    
    Let $\pi \colon X \times [0,1] \to X \times \{ t_0 \}$ be the projection map. Let us define $\Tilde{p} = \pi \circ p \colon [0,1] \to X \times \{ t_0 \} \subset X \times [0,1]$. Clearly the maps $p$ and $\Tilde{p}$ are homotopic relative to the endpoints, (for instance by a linear homotopy on the second coordinate). Let $H \colon [0,1] \times [0,1] \to X \times [0,1]$ be a homotopy between them.
    
    We know there are splittings of $\Phi \circ H$ and $\Psi \circ H$, because the domain satisfies the assumptions of the Splitting Lemma. So we can write $\Phi \circ H_1 = \Phi \circ \Tilde{p} = \{\Tilde{\phi}_1,\ldots,\Tilde{\phi}_n\}$ and $\Psi \circ H_1 = \Psi \circ \Tilde{p} = \{\Tilde{\psi}_1,\ldots,\Tilde{\psi}_m\}$, where $\Tilde{\phi}_1 \simeq \phi_1$ and $\Tilde{\psi}_1 \simeq \psi_1$ by the homotopies $(\Phi \circ H)_1$ and $(\Psi \circ H)_1$. Since $p(0) = (x,t) = \Tilde{p}(0)$ and $p(1) = (x',t) = \Tilde{p}(1)$, we have $\Tilde{\phi}_1(0) = \phi_1(0) = y$ and $\Tilde{\psi}_1(1) = \psi_1(1) = y'$. Moreover $\Tilde{\phi}_1 \simeq \phi_1$ and $\Tilde{\psi}_1 \simeq \psi_1$ are homotopic relative to the endpoints. 
    
    The desired homotopy $L \colon [0,1] \times [0,1] \to Y$ between $\Tilde{\phi}_1$ and $\Tilde{\psi}_1$ is given by the following combination
    \begin{equation*}
        L(x,t) = \left\{ \begin{array}{lr} (\Phi \circ H)_1(x,3t) & \textrm{for} \ 0 \leq t < \frac{1}{3}, \\ M (x,3t-1) & \textrm{for} \ \frac{1}{3} \leq t < \frac{2}{3}, \\ (\Psi \circ H)_1(x,-3t+3) & \textrm{for} \ \frac{2}{3} \leq t \leq 1. \end{array}\right.
    \end{equation*}
    It is easy to see that $L(0,t) = \Tilde{\phi}_1(0) = \Tilde{\psi}_1(0)$ and $L(1,t) = \Tilde{\phi}_1(1) = \Tilde{\psi}_1(1)$, so $\Tilde{\phi}_1$ and $\Tilde{\psi}_1$ are homotopic relative to the endpoints (as all homotopies inside the formula of $L$ are relative to the endpoints).
For any given $t$ there exists a path $\Tilde{p} \colon [0,1] \to X \times \{ t \}$ for which $\Tilde{p}(0) = (x,t)$, $\Tilde{p}(1) = (x',t)$ and and for the splittings $\Phi \circ \Tilde{p} = \{\Tilde{\phi}_1,\ldots,\Tilde{\phi}_n\}$ and $\Psi \circ \Tilde{p} = \{\Tilde{\psi}_1,\ldots,\Tilde{\psi}_m\}$, there exist $\Tilde{\phi}_1(0) = \Tilde{\psi}_1(0) = y$, $\Tilde{\phi}_1(1) = \Tilde{\psi}_1(1) = y'$ and the paths $\Tilde{\phi}_1, \Tilde{\psi}_1 \colon [0,1] \to Y$ are homotopic relative to the endpoints, so the points $(x,y)$ and $(x,y')$ are in the same graph intersection class $C_t$ of $\phi_t$ and $\psi_t$. We have shown that the nonempty intersection $C_t = C \cap (X \times \{ t \} \times Y)$ is a graph intersection class of $\phi_t$ and $\psi_t$.

Conversely, suppose that points $(x,y)$ and $(x,y')$ are in the same graph intersection class of $\phi_t$ and $\psi_t$. Then there exists a path $p \colon [0,1] \to X \times \{ t \}$, which satisfies all the conditions from the definition of  equivalence of graph intersection classes. It is easy to see this path $p \colon [0,1] \to X \times \{ t \} \subset X \times [0,1]$ also demonstrates the equivalence of the graph intersection classes of $\Phi$ and $\Psi$. The splittings of $\Phi \circ p$, $\Psi \circ p$ are the same as the splittings of $\Phi_t \circ p$ and $\Psi_t \circ p$, so the points $((x,t),y)$ and $((x',t),y')$ are also in the same graph coincidence class of $\Phi$ and $\Psi$, thus, they are in a coincidence class $C_t = C \cap (X \times \{ t\} \times Y)$. 
\end{proof}

\subsection{The geometric Nielsen number}

A graph intersection class $C_0$ of an $(n,m)$-valued pair of maps $(f,g) \colon X \multimap Y$ is \textit{geometrically inessential} if there are homotopies $\Phi, \Psi \colon X \times [0,1] \multimap Y$ such that $\Phi_0 = f, \Psi_0 = g$ and the graph intersection class $C$ of $\Phi$ and $\Psi$ containing $C_0$ has the property $C \cap (X \times \{1\} \times Y) =  \emptyset$. Otherwise, the graph intersection class is \textit{geometrically essential}. The \textit{geometric Nielsen coincidence number} $\hat N(f:g)$ is the number of geometrically essential graph intersection classes.

In \cite{Brown1} the Nielsen relation is defined on $\Coin_X(f:g)$, but it cannot be properly defined on this set. We demonstrated this briefly in the introduction (Figure~\ref{introfig}) and now we revisit the idea with the following example. 

\begin{example}
Consider the pair of power maps $(\phi_{2,1}, \phi_{3,-1})$ and the pair $(f,g)$, where the graphs of the maps $f,g \colon S^1 \multimap S^1$ are presented on the figure, along with the domain coincidences of both of the pairs. \pagebreak

\begin{figure}[h!]
\centering
\begin{minipage}{.5\textwidth}
  \centering
  \includegraphics[width=0.9\linewidth]{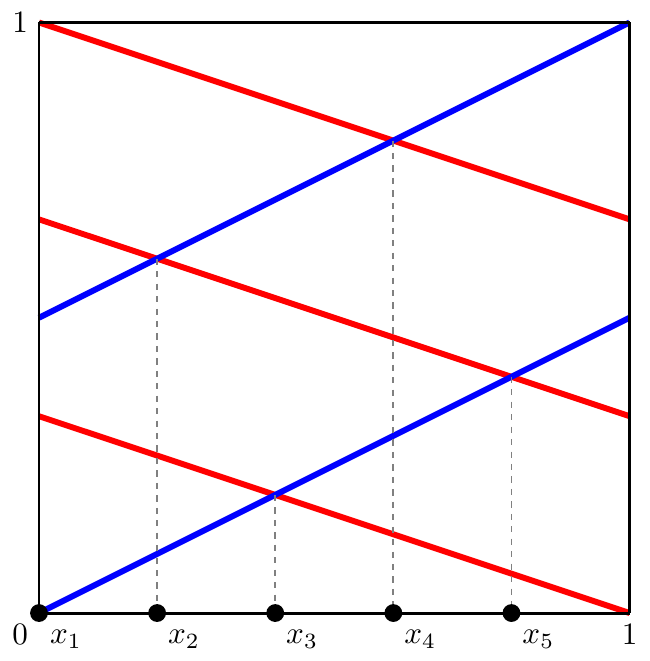}
  \caption{Domain coincidences of $(\phi_{2,1}, \phi_{3,-1})$.}
  \label{fig:left}

\end{minipage}%
\begin{minipage}{.5\textwidth}
  \centering
  \includegraphics[width=0.9\linewidth]{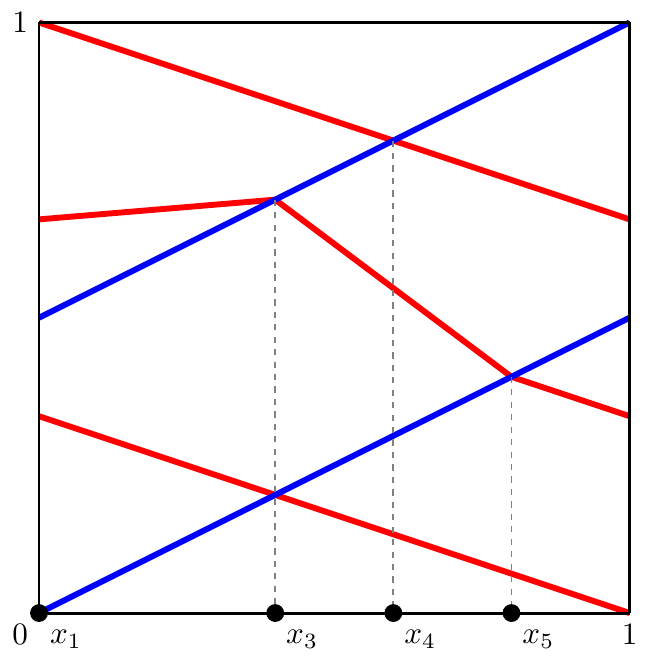}
  \caption{Domain coincidences of $(f, g)$.}
  \label{fig:counter2}

\end{minipage}
\end{figure}

\noindent As $\deg(f) = \deg(\phi_{2,1}) = 1$ and $\deg(g) = \deg(\phi_{3,-1})= -1$, this pairs of maps are homotopic, so the Nielsen numbers are equal. Using the formula \eqref{browneq} which was claimed by Brown \& Kolahi in \cite{Brown1}, this Nielsen number would be calculated as:
\begin{equation}\label{eq:N}
\frac{\vert 1 \cdot 3 - (-1) \cdot 2 \vert}{\GCD(2,3)} = 5.
\end{equation}

\noindent However, Brown \& Kolahi's Nielsen number is intended to be a lower bound on the number of domain coincidence points, but we can clearly see that $\#\Coin_X(f:g)=4 < 5$. Indeed checking the details of the Nielsen relation on $\Coin_X(\phi_{2,1} : \phi_{3,-1})$ from \cite{Brown1} will show that none of these domain coincidence points are equivalent, but, as our example shows, two of them can be merged into one point by a homotopy.
\end{example}

\begin{remark}
The core difficulty with the Nielsen relation discussed in \cite{Brown1} lies in its non-transitivity, as the relation is defined solely in terms of points in the domain. In our example: if we consider the pair of linear homotopies $\Phi, \Psi \colon S^1 \times [0,1] \multimap S^1$ such that $\Phi_0 = \phi_{2,1}, \Psi_0 = \phi_{3,-1}$ and $\Phi_1 = f, \Psi_1 = g$ (so Figure~\ref{fig:left} shows the homotopies in time $t=0$ and Figure~\ref{fig:counter2} shows the homotopies in time $t=1$). Directly by the definition the (domain) coincidence points $(x_3,0), (x_3,1)$ are related, as well as the points $(x_2,0), (x_3,1)$, but the  relation does not hold between the points $(x_3,0), (x_2,0)$. 
\end{remark}

The following theorem is proved in \cite{Brown1}. We restate that proof using graph intersection classes and the geometric Nielsen coincidence number.

\begin{theorem} (cf. \cite{Brown1})
    The geometric Nielsen coincidence number for $(n,m)$-valued pairs of maps of connected finite polyhedra is a homotopy invariant. If $f,f' \colon X \multimap Y$ and $g,g' \colon X \multimap Y$ are homotopic, respectively, then
    \begin{equation}
        \hat N(f:g) = \hat N(f' : g').
    \end{equation}
\end{theorem}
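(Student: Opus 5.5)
We follow the classical Schirmer/Brown argument, using Lemma~\ref{lemma:classeshomotopy} as the main tool. Fix an $(n,m)$-valued pair of homotopies $(\Phi,\Psi)$ from $(f,g)$ to $(f',g')$. It suffices to prove $\hat N(f:g)\le \hat N(f':g')$, since the reversed homotopies $\Phi'(x,t)=\Phi(x,1-t)$, $\Psi'(x,t)=\Psi(x,1-t)$ give the opposite inequality.

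\emph{Step 1: a map on essential classes.} Let $C_0$ be a geometrically essential graph intersection class of $(f,g)$. By Lemma~\ref{lemma:classeshomotopy}, $C_0$ lies in a unique graph intersection class $C$ of $(\Phi,\Psi)$. We claim $C_1=C\cap(X\times\{1\}\times Y)$ is nonempty. If it were empty, $(\Phi,\Psi)$ would itself witness that $C_0$ is inessential, contradicting essentiality. So, again by Lemma~\ref{lemma:classeshomotopy}, $C_1$ is a graph intersection class of $(f',g')$. This gives an assignment $C_0\mapsto C_1$.

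\emph{Step 2: injectivity.} Lemma~\ref{lemma:classeshomotopy} says each class of $(\Phi_t,\Psi_t)$ is contained in a unique class of $(\Phi,\Psi)$, and that $C\cap(X\times\{t\}\times Y)$ is a single class (or empty). Suppose $C_0\ne C_0'$ have the same image $C_1$. Then $C_1$ lies in both ambient classes $C$ and $C'$, so $C=C'$. Hence $C\cap(X\times\{0\}\times Y)$ contains both $C_0$ and $C_0'$, contradicting that this slice is a single class. The assignment is therefore injective.

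\emph{Step 3: essential classes go to essential classes.} This is the main obstacle. Suppose $C_1$ were inessential, via homotopies $(\Phi'',\Psi'')$ starting at $(f',g')$ whose ambient class $D\supseteq C_1$ misses the time-$1$ slice. Concatenate to get $\Theta=\Phi*\Phi''$ and $\Xi=\Psi*\Psi''$; these are continuous $n$- and $m$-valued homotopies because they agree at the gluing time. Let $E$ be the class of $(\Theta,\Xi)$ containing $C_0$. We must show that $E$ restricted to the first half (reparametrized) is $C$, and $E$ restricted to the second half is $D$. For this, apply the Lemma~\ref{lemma:classeshomotopy} argument to the subcylinders $X\times[0,\tfrac12]$ and $X\times[\tfrac12,1]$. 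The point is that the slice at $t=\tfrac12$ is a single class $C_1$, which belongs to both $C$ and $D$. Connectivity then forces $E\cap(X\times[\tfrac12,1]\times Y)$ to correspond to $D$. Here one needs a version of the lemma for a sub-interval: a class of the full homotopy restricted to a subcylinder is a union of subcylinder classes, and the slice-projection argument shows it is exactly one. Since $D$ misses the final slice, $E$ misses time $1$, so $C_0$ would be inessential, a contradiction.

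\emph{Conclusion.} Steps 1–3 give an injection from the essential classes of $(f,g)$ into the essential classes of $(f',g')$, so $\hat N(f:g)\le\hat N(f':g')$. Symmetry then gives equality.
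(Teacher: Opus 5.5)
Your proposal is correct and follows the paper's proof essentially step for step. You transport an essential class $C_0$ along $(\Phi,\Psi)$ via Lemma~\ref{lemma:classeshomotopy}, show that the resulting class $C_1$ is essential by concatenating with a hypothetical inessentiality homotopy, and use the reversed homotopies for the other inequality. You are more explicit than the paper about three points: why $C_1\neq\emptyset$, why the assignment $C_0\mapsto C_1$ is injective, and why the concatenated class restricts to $D$ on $[\tfrac12,1]$ (the sub-interval projection argument). The paper simply asserts all three.
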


\begin{proof}
    Let $\Phi, \Psi \colon X \times [0,1] \multimap Y$ be homotopies such that $\Phi_0 = f, \Phi_1 = f', \Psi_0 = g$ and $\Psi_1 = g'$. Let us denote an essential coincidence class of $f$ and $g$ by $C_0$. From Lemma~\ref{lemma:classeshomotopy} there exists a coincidence class of $\Phi$ and $\Psi$ containing $C_0$. We denote it by $\mathbf{C}$. Then $C_1 =
 \mathbf{C} \cap (X \times \{1\} \times Y)$ is a coincidence class of $f'$ and $g'$. If $C_1$ was an inessential coincidence class of $f'$ and $g'$, then there would be an $(n,m)$-valued pair of
 homotopies $\Phi', \Psi'\colon X \times [0,1] \multimap Y$ such that for the coincidence class of $\Phi'$ and $\Psi'$, denoted by $\mathbf{C'}$ and containing $C_1$, we would have $ \mathbf{C'} \cap (X \times \{1\} \times Y) = \emptyset$. Now we define homotopies $\Phi'', \Psi'' \colon X \times [0,1] \multimap Y$ by
     \begin{equation*}
           \Phi''(x,t) = \left\{ \begin{array}{lcr}\Phi(x,2t) & \textrm{for} & \ t \in [0,\frac12] \\ \Phi'(x,2t-1) & \textrm{for} & \ t \in [\frac12,1], \end{array}\right.
    \end{equation*}
 and $\Psi''$ analogously.  Then there exists a graph intersection class $\mathbf{C''}$ of $(\Phi'',\Psi'')$
whose restriction to $t \in [0,\tfrac12]$ corresponds to $\mathbf{C}$
and whose restriction to $t \in [\tfrac12,1]$ corresponds to $\mathbf{C'}$.  However $ \mathbf{C''} \cap (X \times \{1\} \times Y) = \emptyset$. Therefore, $C_1$ is also essential and the pair of homotopies $(\Phi,\Psi)$ determine a one-to-one correspondence between the essential coincidence classes of $f$ and $g$ and the essential coincidence classes of $f'$ and $g'$. The $(n,m)$-valued pair of homotopies $\bar{\Phi}, \bar{\Psi} \colon X \times [0,1] \multimap Y$ defined by $\bar{\Phi}(x,t) = \Phi(x,1-t)$ and $\bar{\Psi}(x,t) = \Psi(x,1-t)$ determines a one-to-one correspondence between the essential coincidence classes of $f'$ and $g'$ and the essential coincidence classes of $f$ and $g$ in the same way. We conclude $\hat N(f:g) = \hat N(f'\colon g')$.
\end{proof}


\begin{proposition}\label{oszac}
        Since each $(n,m)$-valued pair $(f',g')$ homotopic to $(f,g)$ has at least $\hat N(f:g)$ essential graph intersection classes, and in every essential class there is at least one graph intersection point, we have
    \begin{equation}
        \hat N(f:g) \leq \MC_{X \times Y} (f:g) .
    \end{equation}
\end{proposition}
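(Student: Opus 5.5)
The plan is to show that for every pair $(f',g')$ homotopic to $(f,g)$ we have $\hat N(f:g)\le \#\Coin_{X\times Y}(f':g')$, and then take the minimum over the homotopy class. Fix such a pair and a pair of homotopies $(\Phi,\Psi)$ with $\Phi_0=f$, $\Psi_0=g$, $\Phi_1=f'$, $\Psi_1=g'$. By the homotopy invariance theorem proved just above, $\hat N(f':g')=\hat N(f:g)$. So it suffices to prove the inequality $\hat N(f':g')\le \#\Coin_{X\times Y}(f':g')$ for a single pair.

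The key observation is that graph intersection classes partition $\Coin_{X\times Y}(f':g')$. They are equivalence classes of the graph Nielsen relation, whose equivalence property was established in the lemma above.

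We next check that a geometrically essential class is nonempty. Suppose $C_0=\emptyset$. Then its class $C$ under the constant homotopy ($\Phi_t=f'$, $\Psi_t=g'$) meets $X\times\{1\}\times Y$ only in what Lemma~\ref{lemma:classeshomotopy} allows. A simpler route is to note that classes are by definition equivalence classes of points, hence nonempty. Either way, each of the $\hat N(f':g')$ essential classes contains at least one graph intersection point.

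Because the classes are pairwise disjoint, choosing one point from each essential class gives an injection from the set of essential classes into $\Coin_{X\times Y}(f':g')$. Hence
\[
\hat N(f:g)=\hat N(f':g')\le \#\Coin_{X\times Y}(f':g').
\]
Taking the minimum over all pairs $(f',g')$ homotopic to $(f,g)$ yields $\hat N(f:g)\le \MC_{X\times Y}(f:g)$.

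The only delicate point is the homotopy invariance step. It relies on Lemma~\ref{lemma:classeshomotopy}, which is already established, so there is no real obstacle. If $\Coin_{X\times Y}(f':g')$ is infinite, the inequality is trivial.
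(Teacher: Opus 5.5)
Your proposal is correct and is essentially the paper's own argument: by homotopy invariance every pair $(f',g')$ homotopic to $(f,g)$ has $\hat N(f':g')=\hat N(f:g)$ essential graph intersection classes, these are pairwise disjoint nonempty subsets of $\Coin_{X\times Y}(f':g')$, so $\#\Coin_{X\times Y}(f':g')\geq \hat N(f:g)$, and minimizing over the homotopy class gives the inequality. The aborted remark about the constant homotopy is unnecessary and can be dropped. Nonemptiness of classes is immediate from their definition as equivalence classes of points, exactly as you go on to observe.
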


\noindent By the Classification Theorem \ref{th:classification} and Proposition \ref{oszac} we conclude that for an $n$-valued map $f \colon S^1 \multimap  S^1$ with degree $a$ and an $m$-valued map $g \colon  S^1 \multimap  S^1$ with degree $b$, we have
\begin{equation}
\label{eq:mcxybound}
    \hat  N (f:g) = \hat  N (\phi_{n,a} : \phi_{m,b}) \leq \MC_{X \times Y} (\phi_{n,a} : \phi_{m,b}).
\end{equation}

Now we will show that every graph intersection point of two power maps is in a different graph intersection class, which is one of the crucial observations.  

\begin{lemma} \label{lemma:difclass}
    If $am \neq bn$, then every $(x,y) \in \Coin_{X \times Y} (\phi_{n,a} : \phi_{m,b})$ is in a different graph intersection class of $\phi_{n,a}$ and $\phi_{m,b}$.   
\end{lemma}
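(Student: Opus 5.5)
Suppose, for contradiction, that two distinct graph intersection points $(x,y)$ and $(x',y')$ of $(\phi_{n,a},\phi_{m,b})$ lie in the same graph intersection class. The plan is to lift everything to $\R$ and compare real-valued lifts.

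By definition of the graph Nielsen relation there is a path $p\colon[0,1]\to S^1$ from $x$ to $x'$, together with branches $f_j$ of $\phi_{n,a}\circ p$ and $g_k$ of $\phi_{m,b}\circ p$ such that $f_j\simeq g_k$ rel endpoints. Lift $p$ to $\tilde p\colon[0,1]\to\R$ with $\tilde p(0)=s_0\in[0,1)$ a representative of $x$, and put $\tilde p(1)=s_1$. Every branch of a power map along $p$ has the explicit real lift
\[
\tilde f_j(\tau)=\tfrac{a}{n}\tilde p(\tau)+\tfrac{u}{n}+\alpha, \qquad \tilde g_k(\tau)=\tfrac{b}{m}\tilde p(\tau)+\tfrac{v}{m}+\beta,
\]
with $u,v,\alpha,\beta$ constants ($\alpha,\beta\in\Z$). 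Since $f_j(0)=g_k(0)=y$, we may choose $\alpha,\beta$ so that $\tilde f_j(0)=\tilde g_k(0)$. Homotopy rel endpoints in $S^1$ is equivalent to the lifts starting at the same point having the same endpoint, so $\tilde f_j(1)=\tilde g_k(1)$. Subtracting the two equalities gives
\[
\Big(\tfrac{a}{n}-\tfrac{b}{m}\Big)(s_1-s_0)=0.
\]
Since $am\neq bn$, this forces $s_1=s_0$, so the lifted path is a loop in $\R$ and $x'=x$.

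It remains to show $y'=y$. The lifted branch satisfies $\tilde f_j(1)=\tilde f_j(0)$ because $\tilde p(1)=\tilde p(0)$, hence $y'=f_j(1)=f_j(0)=y$. The two points therefore coincide, which contradicts their distinctness. So distinct graph intersection points lie in distinct classes.

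The main obstacle is the claim that a branch of the splitting of $\phi_{n,a}\circ p$ lifts to an affine function of $\tilde p$ with fixed offset. I would justify it as follows.
\begin{itemize}
\item Along $\tilde p$, the $n$ functions $\tau\mapsto r\big(\tfrac{a}{n}\tilde p(\tau)+\tfrac{u}{n}\big)$, $u=0,\dots,n-1$, are continuous and pairwise distinct at every $\tau$.
\item They therefore form a splitting of $\phi_{n,a}\circ p$.
\item Any splitting of a map on $[0,1]$ is unique up to relabeling, by connectedness and the fact that the values are distinct.
\end{itemize}
I would also take care that the lift $\tilde p$ may wander over several periods. This does not matter, since the argument only uses the endpoint values $s_0$ and $s_1$.
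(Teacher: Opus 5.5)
Your proof is correct and takes the same route as the paper: lift the branches $f_j$, $g_k$ to $\R$ along $p$, and use that homotopy rel endpoints in $S^1$ forces equal endpoints for lifts with a common starting point. Your explicit identity $\tilde f_j(1)-\tilde g_k(1)=\bigl(\frac{a}{n}-\frac{b}{m}\bigr)(s_1-s_0)$ makes precise a step the paper only asserts, namely that $f_i\neq g_j$ yields a nonzero integer discrepancy $c$. It also correctly isolates the one remaining case $s_1=s_0$, in which the two graph intersection points coincide.
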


\begin{proof} 
    Let us consider two different graph intersection points $(x,y)$ and $(x',y')$ of the pair $(\phi_{n,a}, \phi_{m,b})$. The points $(x,y)$ and $(x',y')$ are equivalent graph intersection points if there is a path $p \colon [0,1] \to X$ such that $p(0) = x,~~p(1) = x'$ and for the splittings $\phi_{n,a} \circ p = \{f_1,\ldots,f_n\}$ and $\phi_{m,b} \circ p  = \{g_1,\ldots,g_m\}$, there exist $1 \leqslant i \leqslant n$ and $1 \leqslant j \leqslant m$ such that $f_i(0) = g_j(0) = y$, $f_i(1) = g_j(1) = y'$ and the paths $f_i, g_j \colon [0,1] \to Y$ are homotopic relative to the endpoints. Assume contrary to our claim that there exist such $f_i$ and $g_j$. If $am - bn \neq 0$, then $\frac{a}{n} \neq \frac{b}{m}$, so $f_i, g_j$ are compositions of the path $p$ with linear functions with different slopes. As a consequence  $f_i$ and $g_j$ are not identical. Let $\Tilde{f_i}, \Tilde{g_j} \colon [0,1] \to \mathbb{R}$ be lifts of $f_i, g_j$ such that $\Tilde{f_i}(0) = f_i(0)$ and $\Tilde{g_j}(0) = g_j(0)$. Since $f_i(0) = g_j(0) = y$ and $f_i(1) = g_j(1) = y'$ and $f_i \neq g_j$,  we will have $\Tilde{f_i}(1) = \Tilde{g_j}(1) + c$, where $c \in \mathbb{Z}\setminus \{0\}$. However, if $f_i,g_j$ are homotopic relative to the endpoints, then the lifts $\Tilde{f_i},\Tilde{g_j}$  have also this property. This leads to a contradiction  because  $\Tilde{f_i}(1) \neq \Tilde{g_j}(1)$. 
\end{proof}

\begin{proposition}
\label{prop:N=0}
        Let $f \colon S^1 \multimap S^1$ be an $n$-valued map with degree $a$, and $g \colon S^1 \multimap S^1$ be an $m$-valued map with degree $b$ such that $am = bn$. Then
    \begin{equation}
        \hat N( f \colon g ) = 0.
    \end{equation}    
\end{proposition}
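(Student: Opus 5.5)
By the homotopy invariance of $\hat N$ (the theorem preceding Proposition~\ref{oszac}) together with the Classification Theorem~\ref{th:classification}, we have $f\simeq\phi_{n,a}$ and $g\simeq\phi_{m,b}$. Hence $\hat N(f:g)=\hat N(\phi_{n,a}:\phi_{m,b})$, and it suffices to treat the pair of power maps with $am=bn$.

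We then apply inequality \eqref{eq:mcxybound}, which gives $\hat N(\phi_{n,a}:\phi_{m,b})\le \MC_{X\times Y}(\phi_{n,a}:\phi_{m,b})$. So it is enough to exhibit one pair in the homotopy class with no graph intersection points at all. Theorem~\ref{th:domainS1} supplies this: for the translate $\varphi=\phi_{m,b}+\frac{1}{2nm}$ we have $\Coin_X(\phi_{n,a}:\varphi)=\emptyset$. Proposition~\ref{MCXleMCXY} only gives $\#\Coin_X\le\#\Coin_{X\times Y}$, the wrong direction here. Instead we argue directly from the definitions: any graph intersection point $(x,y)$ would make $x$ a domain coincidence point, so $\Coin_{X\times Y}(\phi_{n,a}:\varphi)=\emptyset$.

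It remains to check that $(\phi_{n,a},\varphi)$ is homotopic to $(\phi_{n,a},\phi_{m,b})$ in the sense of Definition~\ref{def:hom}. We use the constant homotopy on the first map and the rotation homotopy $\Psi(x,t)=\phi_{m,b}(x)+\frac{t}{2nm}$ on the second. $\Psi$ is an $m$-valued map because rotating the circle preserves the $m$ distinct values and continuity. Alternatively, $\deg\varphi=b$, so the Classification Theorem applies directly. Therefore $\MC_{X\times Y}=0$, and since $\hat N\ge 0$ we conclude $\hat N(f:g)=0$.

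The only potential subtlety is the bookkeeping between the two parts of Theorem~\ref{th:domainS1}. That theorem assumes $n\ge m$ without loss of generality. If instead $m>n$, we translate $\phi_{n,a}$ rather than $\phi_{m,b}$, or note that the same computation with $\frac{1}{2nm}$ is symmetric in $n$ and $m$. Beyond that the argument is routine.
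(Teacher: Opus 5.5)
Your proposal is correct and takes essentially the same approach as the paper. You reduce to the power-map pair via homotopy invariance and the Classification Theorem, use the translate $\varphi=\phi_{m,b}+\frac{1}{2nm}$ from Theorem~\ref{th:domainS1} to get a homotopic pair with no domain coincidences (hence no graph intersection points), and conclude from \eqref{eq:mcxybound}; your explicit rotation homotopy and your remark that the projection of $\Coin_{X\times Y}$ onto $X$ is exactly $\Coin_X$ only spell out steps the paper leaves implicit.
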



\begin{proof}
    Directly from Theorem~\ref{th:domainS1} we get a pair of maps $(\phi_{n,a},\varphi)$ homotopic to $(\phi_{n,a},\phi_{m,b})$ without domain coincidence points, and as a result, without graph intersection points. Using the formula~\eqref{eq:mcxybound}, we obtain: 
    \begin{equation*}
        \hat N (f:g) = \hat N (\phi_{n,a} : \phi_{m,b}) = \hat N (\phi_{n,a} : \varphi)  \leq \MC_{X \times Y} (\phi_{n,a} : \varphi) = 0. \qedhere
    \end{equation*}
    \end{proof}

In the case $am \neq bn$  determining geometric Nielsen coincidence number becomes significantly more challenging. The main difficulty lies in proving the essentiality of the geometric Nielsen coincidence classes (see Problems \ref{problem1} and \ref{problem2}). For this reason, in the following chapters, we propose a different approach, concentrating on determining the minimal number of intersection points in the homotopy class (cf. \textit{algebraic Nielsen coincidence number} $N(f:g)$, see Section  \ref{AlgNielsen}).

\section{Power maps as torus loops }\label{sec:intersection}

In this section we consider the connections between $n$-valued power maps and loops in the torus. 

For a map $f:X\multimap Y$, let $\Gamma(f)\subset X\times Y$ be the \emph{graph of $f$}, given by 
\[ \Gamma(f) = \{ (x,y) \in X\times Y ;~~ y \in f(x) \}. \]
For a power map $\phi_{n,a}$, we may view the graph $\Gamma(\phi_{n,a})$ as a subset of the two-dimensional torus $\mathbb T^2$. This graph need not be path-connected; however, it can always be decomposed into path-connected components.
 
\begin{lemma}
\label{lemma:pathconnected}
     Let $n,a \in \mathbb{Z}, n \geqslant 1.$ The graph of a power map $\phi_{n,a}$ given by
     \begin{equation*}
         \Gamma(\phi_{n,a}) = \{ (x,y) \in \mathbb{T}^2;~~ y \in \phi_{n,a}(x) \}
     \end{equation*}
     is path-connected if and only if $\GCD(n,\vert a \vert) = 1$. 
\end{lemma}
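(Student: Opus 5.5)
The idea is to parametrize the graph explicitly and then count its components. Regard $\mathbb{T}^2=\R^2/\Z^2$. The graph $\Gamma(\phi_{n,a})$ is the image of the $n$ line segments
\[
\ell_u(s) = \Big(s, \tfrac{a}{n}s + \tfrac{u}{n}\Big), \qquad s\in[0,1],\ u\in\{0,1,\ldots,n-1\},
\]
under the quotient map. Each segment is path-connected. At $s=1$ the segment $\ell_u$ ends at the point $\big(0, \tfrac{a+u}{n}\big)$ modulo $\Z^2$, which is the starting point of $\ell_{u'}$ with $u' \equiv u+a \pmod n$. So the segments join end to end according to the permutation $\sigma\colon \Z_n\to\Z_n$, $\sigma(u)=u+a$. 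The segments $\ell_u$ and $\ell_{u'}$ are disjoint for $u\neq u'$, since for each $s$ their second coordinates differ by $(u-u')/n\notin\Z$. Hence the path components of $\Gamma(\phi_{n,a})$ correspond exactly to the orbits of $\sigma$, i.e.\ to the cosets of the subgroup $\langle a\rangle\subseteq\Z_n$.

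The key step is to make this correspondence rigorous in both directions.

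\emph{Orbits are connected.} If $u$ and $u'$ lie in the same orbit, concatenating the segments $\ell_u, \ell_{\sigma(u)}, \ldots$ gives a path in $\Gamma(\phi_{n,a})$ from $\ell_u$ to $\ell_{u'}$.

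\emph{Different orbits are separated.} For an orbit $O$, put $\Gamma_O=\bigcup_{u\in O}\ell_u$. Each $\Gamma_O$ is compact, and the sets $\Gamma_O$ are pairwise disjoint. The only possible overlaps would occur at the seam $s=0\equiv 1$, and there the identifications happen only within an orbit by construction of $\sigma$. A finite disjoint union of compact sets has each piece clopen, so the $\Gamma_O$ are exactly the path components.

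It follows that $\Gamma(\phi_{n,a})$ is path-connected if and only if $\sigma$ is a single $n$-cycle, i.e.\ $a$ generates $\Z_n$. This holds if and only if $\GCD(n,|a|)=1$. Two conventions should be noted. For $a=0$ we read $\GCD(n,0)=n$, so the graph is connected only when $n=1$, which is consistent. Negative $a$ needs no separate treatment, since $\langle a\rangle=\langle -a\rangle$.

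The main obstacle is the bookkeeping at the seam $s=0\equiv1$. One has to justify that the gluing of endpoints is precisely $u\mapsto u+a \bmod n$: the value $\tfrac{a+u}{n}$ must be reduced modulo $1$, which amounts to reducing $a+u$ modulo $n$. One must also check that no other coincidences occur between different segments. Both points follow from the observation, used already in the proof of Lemma~\ref{lemma:copies}, that the sets $\phi_{n,a}(s)$ consist of $n$ points spaced $1/n$ apart.
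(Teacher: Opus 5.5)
Your argument is correct and is essentially the paper's: both follow the branch starting at $(0,u/n)$ until it returns over $0$ at $(0,(u+a)/n)$, and use divisibility by $\GCD(n,|a|)$ to decide which points $(0,k/n)$ can be reached. Your version is more complete, because the paper only proves $\GCD(n,|a|)>1 \Rightarrow$ disconnected and tacitly assumes that components are traced out by following branches, whereas your orbit/clopen decomposition justifies that separation step and also proves the converse; one small fix is to state the initial disjointness of $\ell_u,\ell_{u'}$ only for $s\in[0,1)$, which is how your seam discussion effectively uses it.
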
 

\begin{proof}
Assume $\phi_{n,a}$ is a power map and $\GCD(n,\vert a\vert) = w > 1$. From the definition the slopes of lines of this power map are equal to $s = \frac{a}{n}$. Let us consider the line starting in $(0,0)$. It must pass the point $(0,\frac{a}{n} \mod 1)$ and next the points $(0,m \cdot \frac{a}{n} \mod 1)$, where $m \in \{1,2,\ldots, n\}$. As $\phi_{n,a}(0) = \{ \frac{0}{n}, \frac{1}{n},\ldots,\frac{n-1}{n} \}$, if $\Gamma(\phi_{n,a})$ is connected, the point $(0,\frac1n)$ must have the form $(0,m \cdot \frac{a}{n} \mod 1)$. So 
\begin{equation} \label{eq:pathconn}
\begin{aligned}
    m \cdot \frac an &= \frac1n + l,~~l \in \Z \\
    ma &= 1 +ln,
\end{aligned}
\end{equation}
must hold. However the left-hand side is divisible by $w$, but the right-hand side is not divisible by $w$. Thus the points $(0,0)$ and $(0,\frac 1n)$ are not connected by any of the branches of the given power map, and so $\Gamma(\phi_{n,a})$ is not path connected.
\end{proof}

\begin{remark}
Note that (in the case of positive degree) formula~(\ref{eq:pathconn}) does not hold when we replace $1$ by any positive natural number smaller than $w$, thus $\Gamma(\phi_{n,a})$ has $w=\GCD(n,|a|)$ connected components.
\end{remark}


Given an $n$-valued map $f:S^1\multimap S^1$, a \emph{graph splitting} of the graph $\Gamma(f)$ is a decomposition of $\Gamma(f)$ into disjoint sets, each of which is the graph of a $u$-valued map. The number $u$ is given by $u = \frac{n}{k}$, where $k$ is the number of components of $\Gamma(f)$.  The following lemmas state elementary facts about graph splittings, their proofs are straightforward and thus omitted.

\begin{lemma}
\label{lemma:graphsplit}
     Let $\phi_{n,a}$ be a power map and $\Gamma(\phi_{n,a})$ be disconnected. Then there exists a graph splitting into $w = \GCD(n,\vert a \vert)$ different $\frac{n}{w}$-valued maps with degrees equal to $\frac{a}{w}$. 
\end{lemma}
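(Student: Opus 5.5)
Write $w=\GCD(n,|a|)$, $n=wn_0$ and $a=wa_0$, so that $\GCD(n_0,|a_0|)=1$. The plan is to exhibit the splitting explicitly: for $j\in\{0,1,\ldots,w-1\}$ we define
\[
\psi_j(s)=\Big\{\frac{a}{n}s+\frac{u}{n};~~u\equiv j \pmod w,\ 0\le u\le n-1\Big\} \pmod 1 ,
\]
so $\psi_j$ collects the $n_0$ branches of $\phi_{n,a}$ whose offset index $u$ lies in the residue class $j$ modulo $w$. Since $\{0,\ldots,n-1\}$ is the disjoint union of these $w$ residue classes, each of size $n_0$, we get $\Gamma(\phi_{n,a})=\bigsqcup_j\Gamma(\psi_j)$. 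The values listed in $\psi_j(s)$ are pairwise distinct, because they are already distinct as values of $\phi_{n,a}(s)$.

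Next we check that each $\psi_j$ is a well-defined continuous $n_0$-valued map of $S^1$. Continuity on $[0,1)$ is clear, since locally $\psi_j$ is a union of linear branches. The only issue is compatibility at $s=0\sim 1$, that is, whether $\psi_j(1)=\psi_j(0)$ as sets mod $1$. We have $\frac{a}{n}\cdot 1+\frac{u}{n}=\frac{u+a}{n}$, and $u\mapsto u+a \pmod n$ preserves the residue class mod $w$ because $w\mid a$ and $w\mid n$. Hence it permutes the index set of $\psi_j$, and the check at the endpoint is exactly this observation. Disjointness of the graphs also follows: $\frac{as+u}{n}\equiv\frac{as+u'}{n}\pmod 1$ forces $u\equiv u'\pmod n$, hence $u=u'$.

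Finally we compute the degree. Rewriting gives
\[
\frac{a}{n}s+\frac{j+wu'}{n}=\frac{a_0}{n_0}s+\frac{u'}{n_0}+\frac{j}{n},\qquad u'\in\{0,\ldots,n_0-1\},
\]
so $\psi_j=\phi_{n_0,a_0}+\frac{j}{n}$, a vertical translate of the power map $\phi_{n_0,a_0}$. A constant vertical rotation does not change the degree, since the formula $\deg=nv+J$ depends only on how the lifts of the branches are permuted over one turn, and a rotation by $\frac{j}{n}$ shifts every lift by the same constant. Therefore, by Lemma stating $\deg(\phi_{n,d})=d$, we get $\deg(\psi_j)=a_0=\frac{a}{w}$. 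By Lemma~\ref{lemma:pathconnected} each $\Gamma(\psi_j)$ is connected, since $\GCD(n_0,|a_0|)=1$; this agrees with the Remark that there are exactly $w$ components. So the decomposition is a genuine graph splitting into $w$ maps, each $\frac{n}{w}$-valued, and distinct $j$ give distinct maps because their graphs are disjoint.

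The only step needing care is the degree-invariance under translation. Concretely, the ordering $t_0<\dots<t_{n_0-1}$ in the definition of the degree may change when we add $\frac{j}{n}$ and reduce mod $1$. I would handle this by using instead the homotopy $\psi_j^{\tau}=\phi_{n_0,a_0}+\tau\frac{j}{n}$, $\tau\in[0,1]$, and invoking homotopy invariance of the degree (Theorem of \cite{Brown3}).
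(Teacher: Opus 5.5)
Your proof is correct. You group the branches by the residue of $u$ modulo $w$, identify each group as the translate $\phi_{n/w,a/w}+\frac{j}{n}$, get the degree from the homotopy $\phi_{n/w,a/w}+\tau\frac{j}{n}$, and get connectedness from Lemma~\ref{lemma:pathconnected} transported by that vertical translation, a homeomorphism of the torus. The paper omits this proof as straightforward, and your route is the same orbit-of-offsets reasoning that underlies its proof of Lemma~\ref{lemma:pathconnected} and the Remark counting $w$ components.
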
 

\begin{lemma}
\label{lemma:splitnumber}
     Let ($\phi_{n,a}, \phi_{m,b}$) be a pair of power maps with $w = \GCD(n,\vert a \vert)$.
     Then
     \begin{equation*}
         \# \Coin_{X \times Y} (\phi_{n,a} : \phi_{m,b}) = w \cdot \# \Coin_{X \times Y} (\phi_{\frac{n}{w},\frac{a}{w}} \colon \phi_{m,b}).
     \end{equation*} 
\end{lemma}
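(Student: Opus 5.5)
The plan is to compute both sides directly with Theorem~\ref{th:numberofcoinxy}; the graph splitting of Lemma~\ref{lemma:graphsplit} then serves only as a consistency check. Write $n=wn'$ and $a=wa'$ with $w=\GCD(n,|a|)$. Since
\[
a'm-bn' = \frac{am-bn}{w},
\]
the two expressions $am-bn$ and $a'm-bn'$ vanish simultaneously. I will therefore split into the cases $am\neq bn$ and $am=bn$.

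\emph{Case $am\neq bn$.} Then $a'm\neq bn'$ as well. Theorem~\ref{th:numberofcoinxy}, applied to the pair $(\phi_{n,a},\phi_{m,b})$, gives $|am-bn|$. Applied to the pair $(\phi_{n',a'},\phi_{m,b})$, it gives $|a'm-bn'| = |am-bn|/w$. Multiplying the second count by $w$ gives the first, which is the claimed identity.

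For a more conceptual version of the same step, I would identify the $w$ pieces of the graph splitting explicitly. Each piece has the form $\phi_{n',a'}+\frac{j}{n}$ for $j=0,\dots,w-1$, because
\[
\frac{a}{n}s+\frac{u}{n} = \frac{a'}{n'}s+\frac{u}{n}
\]
and the residue of $u$ modulo $w$ labels the component. The graph intersection sets of distinct pieces with $\phi_{m,b}$ are disjoint, since the pieces have disjoint graphs. Each piece is a rigid vertical translate of $\phi_{n',a'}$. Its intersections with $\phi_{m,b}$ have the same cardinality as those of $\phi_{n',a'}$, because the counting argument in Lemmas~\ref{lemma:Coin_X}--\ref{lemma:Coin_XY} is unchanged when a constant is added to the congruence: the equation $(a'm-bn')s\equiv \text{const}\pmod{n'm}$ still has $|a'm-bn'|$ solutions. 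Summing over the $w$ pieces then gives the identity.

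\emph{Case $am=bn$.} Here both sides are infinite. By Theorem~\ref{th:domainS1} the domain coincidence sets are all of $S^1$, so the graph intersection sets are infinite. The identity therefore holds only with the convention $\infty=w\cdot\infty$, or else it should be read under the standing assumption $am\neq bn$. I would state this convention explicitly.

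The main obstacle, if one wants the identity without appealing to Theorem~\ref{th:numberofcoinxy}, is justifying that a translated piece $\phi_{n',a'}+c$ meets $\phi_{m,b}$ in exactly $|a'm-bn'|$ points for every constant $c$. Lemma~\ref{lemma:main} was proved only for the untranslated congruence. I would bypass this difficulty by using the direct computation from the theorem above.
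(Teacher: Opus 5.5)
Your proof is correct, but its main line differs from the argument the paper has in mind. The paper omits the proof and presents the lemma as a straightforward fact about graph splittings, right after Lemma~\ref{lemma:graphsplit}. The intended argument is therefore essentially your ``conceptual version'': $\Gamma(\phi_{n,a})$ is the disjoint union of the $w$ graphs $\Gamma(\phi_{n',a'}+\frac jn)$, and each of these meets $\Gamma(\phi_{m,b})$ in as many points as $\Gamma(\phi_{n',a'})$ does.

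You instead evaluate both sides with Theorem~\ref{th:numberofcoinxy}. This is legitimate: that theorem is proved in Section~\ref{sec:power} without using the present lemma, and it applies to $(\phi_{n',a'},\phi_{m,b})$ because $a'm-bn'=(am-bn)/w\neq 0$. It settles the case $am\neq bn$ in two lines. Your treatment of $am=bn$ via Theorem~\ref{th:domainS1} is also right; both sets are uncountable subsets of $\mathbb T^2$, so the identity even holds literally as an equality of cardinals. What your route gives up is the structural content. The splitting argument explains the factor $w$ geometrically and reduces disconnected graphs to connected ones without presupposing the full count.

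If you want the splitting route to be rigorous, your stated justification needs repair, as you partly suspected. After the shift by $\frac jn$, the added constant becomes $\frac{jm}{w}$, which need not be an integer, so the counting in Lemmas~\ref{lemma:Coin_X}--\ref{lemma:Coin_XY} does not literally transfer. Moreover, when $\GCD(n',m)>1$ the number of admissible $s$ is $|a'm-bn'|/\GCD(n',m)$, even without any shift. Only the number of graph intersection points is $|a'm-bn'|$.

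A clean fix is as follows. View $\mathbb T^2=\mathbb R^2/\mathbb Z^2$. Then $\Gamma(\phi_{m,b})=\{(x,y):\ my-bx\in\mathbb Z\}$ is a closed subgroup, so translation by $(\delta,\frac bm\delta)$ preserves it. The same translation carries $\Gamma(\phi_{n',a'})=\{n'y-a'x\in\mathbb Z\}$ onto $\{n'y-a'x\in\mathbb Z+\frac{n'b-a'm}{m}\delta\}$. Choosing $\delta=\frac{mn'c}{n'b-a'm}$ turns this set into $\Gamma(\phi_{n',a'}+c)$. Hence $\#\Coin_{X\times Y}(\phi_{n',a'}+c:\phi_{m,b})$ is independent of $c$, which is exactly the step your sketch needs.
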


If a loop $l:[0,1]\to X$ restricted to the set (0,1) is injective, then we call $l$ a \textit{simple loop}. We denote the image of the loop $l$ by $\Image(l)\subseteq X$. 

A simple loop in the torus may be identified with a path on its covering space $\R^2$.  Every simple loop in the torus at the base point corresponds to a path in $\R^2$ from $(0,0)$ to some other point $(x,y) \in \mathbb{Z}^2$ \cite{Nelson2, Nelson1}.

\begin{definition} \label{def:loopcorr}
Let $\phi_{n,a} \colon S^1 \multimap S^1$ be a power map such that $\GCD(n,\vert a \vert) = 1$. The (simple) loop $l \colon [0,1] \to \mathbb{T}^2$ given by $l(t) = (e^{i2\pi n t}, e^{i2\pi a t}) \in \mathbb{T}^2$ is called the \textit{loop corresponding to the power map} $\phi_{n,a}$. 
\end{definition}

\noindent By Lemma \ref{lemma:pathconnected}, the assumption $\GCD(n,\vert a \vert) = 1$ means that the graph of $\phi_{n.a}$ is path-connected. The loop is represented by the straight path in $\R^2$ between points $(0,0)$ and $(n,a)$, given by $p(t) = (nt,at)$, where $t \in [0,1]$. As the product map $r \times r \colon \R^2 \to \mathbb{T}^2$, where $r \colon \mathbb{R} \to S^1$ is given by $r(t) = e^{i2\pi t}$, is the covering of $\mathbb{T}^2$ by $\R^2$, the points of the loop obtained from the path $p$  are given by $l(t) = (e^{i2\pi n t}, e^{i2\pi a t}) \in \mathbb{T}^2$, where $t \in [0,1]$, and $l(0)=l(1)$. By the assumption $\GCD(n,\vert a \vert)=1$, we get $l(t_1) \neq l(t_2)$ for $t_1 \neq t_2$, so the obtained loop is simple. 

\begin{lemma}
\label{lemma:plotisimage}
Let $\phi_{n,a}$ be a power map with $\GCD(n,|a|)=1$, and let $l$ be its corresponding torus loop. Then $\Image(l) = \Gamma(\phi_{n,a})$.
\end{lemma}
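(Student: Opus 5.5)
We prove the two inclusions by working in the covering $\R^2 \to \mathbb{T}^2$ given by $r\times r$. Identify $\mathbb{T}^2$ with $\R^2/\Z^2$. A point of the torus is then a pair $(x,y)$ taken modulo $1$ in each coordinate. In these coordinates $\Image(l)$ is the set of points $(nt, at) \bmod \Z^2$ with $t\in[0,1]$. Likewise $\Gamma(\phi_{n,a})$ is the set of points $(s, \frac{a}{n}s+\frac{u}{n}) \bmod \Z^2$ with $s\in[0,1)$ and $u\in\{0,\dots,n-1\}$.

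For $\Image(l)\subseteq\Gamma(\phi_{n,a})$, take $t\in[0,1]$ and write $nt = j + s$ with $j\in\Z$ and $s\in[0,1)$, so that $0\le j\le n$. Then
\[
at = \frac{a}{n}(j+s) = \frac{a}{n}s + \frac{aj}{n}.
\]
Reduce $aj$ modulo $n$ to $u\in\{0,\dots,n-1\}$. Then $at \equiv \frac{a}{n}s+\frac{u}{n} \pmod 1$, so $l(t)=(s,\frac{a}{n}s+\frac{u}{n}) \in \Gamma(\phi_{n,a})$. This direction does not use the coprimality assumption.

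For $\Gamma(\phi_{n,a})\subseteq\Image(l)$, fix $s\in[0,1)$ and $u\in\{0,\dots,n-1\}$. Since $\GCD(n,|a|)=1$, multiplication by $a$ is a bijection of $\Z/n\Z$. Hence there is $j\in\{0,\dots,n-1\}$ with $aj\equiv u \pmod n$. Put $t=(j+s)/n\in[0,1)$. Then $nt=j+s\equiv s$, and $at=\frac{a}{n}s+\frac{aj}{n}\equiv \frac{a}{n}s+\frac{u}{n} \pmod 1$. So the graph point equals $l(t)$.

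The only step that needs care is this surjectivity argument. It is exactly where coprimality enters, through invertibility of $a$ modulo $n$; this is the same phenomenon as in Lemma~\ref{lemma:pathconnected}. Beyond that, we must check the bookkeeping in the first inclusion at the boundary value $t=1$, where $j=n$. In that case $u\equiv an\equiv 0$, and $l(1)=l(0)$ corresponds to the graph point $(0,0)$. Together the two inclusions give equality.
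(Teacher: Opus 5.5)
Your proof is correct and follows essentially the same route as the paper: the paper also parametrizes the parameters over a fixed first coordinate $s$ as $t=(s+k)/n$, $k=0,\dots,n-1$, and compares second coordinates, which handles both inclusions at once. Your version is more careful than the paper's, because you state explicitly that $\GCD(n,|a|)=1$ is what makes $\{ak \bmod n\}$ run over all residues. The paper uses this step without comment when it asserts that the second coordinates form exactly $\phi_{n,a}(e^{i2\pi t'})$.
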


\begin{proof}
    Let $\phi_{n,a}$ be a power map with $\GCD(n,\vert a \vert) = 1$. By Definition~\ref{power} $
    \phi_{n,a}(e^{i2\pi t'}) = \{e^{i2\pi\frac{a}{n}t'},e^{i2\pi(\frac{a}{n}t'+\frac 1n)},\ldots,e^{i2\pi(\frac{a}{n}t'+\frac{n-1}n)}\}$. Let us associate with every $t' \in [0,1]$ the set $T = \{ \frac{t'}{n},  \frac{t'}{n}+\frac 1n, \ldots,  \frac{t'}{n} + \frac{n-1}{n} \}$. Note that the first coordinate of the points in the image $\Image(l)$ is the same for every $t \in T$ i.e. $e^{i2\pi n \frac{t'}{n}} = e^{i2\pi n (\frac{t'}{n} + \frac 1n)} = \ldots = e^{i2\pi n (\frac{t'}{n} + \frac{n-1}{n})}$. On the other hand, the second coordinates for $t \in T$ form the set $\{e^{i2\pi\frac{a}{n}t'},e^{i2\pi(\frac{a}{n}t'+\frac 1n)},\ldots,e^{i2\pi(\frac{a}{n}t'+\frac{n-1}n)}\}$, equal to $\phi_{n,a}(e^{i2\pi t'})$. This means that the image $\Image(l) \subset \mathbb{T}^2$ and the graph $\Gamma(\phi_{n,a}) \subset \mathbb{T}^2$ consist of the same points, and so $\Image(l) = \Gamma(\phi_{n,a})$. 

\end{proof}

\begin{corollary}
\label{cor:intloops}
Let $l_1,l_2$ be two torus loops which are represented by the straight paths $p_1, p_2 \colon [0,1] \to \R^2$ between points $(0,0)$, $(n, a)$ and $(0,0),(m,b)$ respectively. Then the number of intersection points of $l_1$ and $l_2$ is given by $\vert am - bn \vert$. (This agrees with \cite{Farb}, see also \cite{Nelson2}).
\end{corollary}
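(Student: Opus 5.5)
The corollary concerns straight paths from $(0,0)$ to $(n,a)$ and to $(m,b)$. The natural reading is that $\GCD(n,|a|)=1$ and $\GCD(m,|b|)=1$, so that $l_1,l_2$ are the simple loops of Definition~\ref{def:loopcorr}, corresponding to the power maps $\phi_{n,a}$ and $\phi_{m,b}$. I also take $am\neq bn$; otherwise the two images coincide. The plan is to reduce the count to Theorem~\ref{th:numberofcoinxy}, i.e.\ to the graph intersection count already computed for power maps.

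\emph{Step 1.} By Lemma~\ref{lemma:plotisimage}, $\Image(l_1)=\Gamma(\phi_{n,a})$ and $\Image(l_2)=\Gamma(\phi_{m,b})$ as subsets of $\mathbb T^2=S^1\times S^1$. Hence
\[
\Image(l_1)\cap\Image(l_2)=\{(x,y);~~y\in\phi_{n,a}(x)\cap\phi_{m,b}(x)\}=\Coin_{X\times Y}(\phi_{n,a}:\phi_{m,b}).
\]
\emph{Step 2.} Theorem~\ref{th:numberofcoinxy} then gives exactly $|am-bn|$ points. I read ``number of intersection points'' as the number of points of $\Image(l_1)\cap\Image(l_2)$. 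Because both loops are simple, each such point has a unique parameter on each loop, so no multiplicities arise.

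The main point needing care is the degenerate and sign cases. If $n$ or $m$ is nonpositive, I would reverse the parametrization of the loop, replacing $(n,a)$ by $(-n,-a)$. This does not change the image, and $|am-bn|$ is unchanged under this substitution. One should also check that the correspondence between loops and power maps covers every primitive vector $(n,a)$ with $n\ge1$.

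If $n=0$ (so $a=\pm1$), the loop is a vertical circle $\{x_0\}\times S^1$ and is not the graph of a power map. In that case I would count directly: the other straight line $t\mapsto(mt,bt)$ meets a vertical circle exactly $|m|$ times, which equals $|am-bn|$. The $m=0$ case is symmetric.

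As an alternative check that avoids the earlier theorem, one can work with lifts in $\R^2$. Intersection points correspond to pairs $(s,t)\in[0,1)^2$ with $s(n,a)-t(m,b)\in\Z^2$. The set of such pairs is $\Z^2$ pulled back under the linear map with determinant $am-bn$. Counting lattice points in a fundamental domain gives $|\det|=|am-bn|$ solutions. Primitivity of both vectors ensures that distinct pairs $(s,t)$ give distinct torus points.
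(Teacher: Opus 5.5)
Your main argument is the paper's own proof. Lemma~\ref{lemma:plotisimage} identifies $\Image(l_1)$ and $\Image(l_2)$ with $\Gamma(\phi_{n,a})$ and $\Gamma(\phi_{m,b})$, and Theorem~\ref{th:numberofcoinxy} then counts their intersection as $|am-bn|$. You make explicit the hypotheses the paper's one-line proof uses only implicitly, namely primitivity of $(n,a)$ and $(m,b)$ and $am\neq bn$; your handling of the cases $n\le 0$ and your lattice-point count via the determinant $am-bn$ are both correct supplements.
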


\begin{proof}
    This follows from Lemma~\ref{lemma:plotisimage} and Theorem~\ref{th:numberofcoinxy}.
\end{proof}

The fundamental group of the torus $\pi_1(\mathbb T^2)$ is equal to $\mathbb{Z}^2$,  which gives a classification of simple loops in the torus. Only the loops represented by some paths $p_1,p_2 \colon [0,1] \to \R^2$ between the same points $(0,0)$ and $(n,a)$ are homotopic to each other. As we can always choose the starting point as $(0,0)$, the simple loop classes are determined by the end point $(n,a)$. Moreover, the torus is path-connected, so the choice of base point of a loop is irrelevant.

\begin{lemma}
\label{lemma:homopowerloop}
For every $n$-valued map $f \colon S^1 \multimap S^1$ with degree equal to $a$, where $\GCD(n,\vert a \vert) = 1$, there exists a simple loop $l_f \colon [0,1] \to \mathbb{T}^2$,  (described by the following formula  (\ref{eq:correspondingloop})) for which $\Gamma(f) = \Image(l_f)$.

\end{lemma}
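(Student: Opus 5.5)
The plan is to build $l_f$ directly from the lift data used to define the degree, and not to go through the Classification Theorem. Write $f\circ r=\{f_0,\dots,f_{n-1}\}$ with lifts $\tilde f_j\colon[0,1]\to\R$ satisfying $\tilde f_j(0)=t_j$ and $0\le t_0<\dots<t_{n-1}<1$, as in the definition of the degree. Write $\tilde f_0(1)=v+t_J$, so that $a=\deg f=nv+J$.

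The first step is to record the monodromy of the branches. Since $\tilde f_0<\tilde f_1<\dots<\tilde f_{n-1}<\tilde f_0+1$ on $[0,1]$, traversing the circle once sends branch $j$ to branch $j+J \pmod n$. The lift also picks up an integer shift: $v$, plus $1$ each time the index wraps past $n-1$. This gives
\[
\tilde f_j(1)=\tilde f_{j+J}(0)+v \ \text{ if } j+J<n, \qquad \tilde f_j(1)=\tilde f_{j+J-n}(0)+v+1 \ \text{ otherwise}.
\]
The permutation $j\mapsto j+J \pmod n$ is a single $n$-cycle if and only if $\GCD(n,J)=1$. Since $a\equiv J \pmod n$, this is exactly the hypothesis $\GCD(n,|a|)=1$.

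The second step defines the loop. Let $j_k=kJ \bmod n$ for $k=0,\dots,n-1$. I would set
\begin{equation}\label{eq:correspondingloop}
l_f(t)=\Bigl(r(nt),\; f_{j_k}\bigl(nt-k\bigr)\Bigr), \qquad t\in\Bigl[\tfrac kn,\tfrac{k+1}n\Bigr],\ k=0,\dots,n-1,
\end{equation}
where $f_{j}(s)$ means $r(\tilde f_j(s))$. The loop is continuous at each $t=k/n$ because $f_{j_{k-1}}(1)=f_{j_k}(0)$ by the first step. It closes up because $j_n=nJ\bmod n=0=j_0$.

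The third step checks that $\Image(l_f)=\Gamma(f)$ and that $l_f$ is simple. As $k$ runs over $0,\dots,n-1$, the index $j_k$ runs over every branch exactly once, so the image is $\{(r(s),f_j(s))\}=\Gamma(f)$. For simplicity, suppose two parameters in $(0,1)$ have the same image. They have the same first coordinate, so they lie in pieces $k\ne k'$ at the same $s$, or one sits at $s=0$ and the other at $s=1$. In the first case, distinct branches take distinct values, so the points differ. In the second case the two points agree only when the parameters are consecutive break points $k/n$ with the same image, which is a single parameter. So $l_f$ is injective on $(0,1)$.

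Concatenating the lifts, $l_f$ lifts to a path in $\R^2$ from $(0,t_0)$ to $(n,t_0+a)$. The added-integer bookkeeping over one full cycle sums to $nv+J=a$, which is presumably the point of the formula. This shows $l_f$ is homotopic to the loop of $\phi_{n,a}$. I expect the main obstacle to be this careful accounting of the integer shifts in the first step, namely confirming that the total vertical displacement is exactly $a$ and that the ordering argument rules out crossings between branches.
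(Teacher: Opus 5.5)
Your proof is correct, and it takes a different route from the paper's.

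The paper first invokes the Classification Theorem to get an $n$-valued homotopy $H$ from $\phi_{n,a}$ to $f$. It splits $H$ over $[0,1]\times[0,1]$ and reads off a monodromy permutation $p$ of the branches that does not depend on $s$. It then renumbers so that $p(j)=j+1 \pmod n$, and builds a whole homotopy of loops $L(t,s)$ from the loop of $\phi_{n,a}$ to $l_f=L(\cdot,1)$. The fact that $p$ is a single $n$-cycle is left implicit there; it follows from connectedness of $\Gamma(\phi_{n,a})$ and constancy of $p$ in $s$.

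You work with $f$ alone. The ordering $\tilde f_0<\dots<\tilde f_{n-1}<\tilde f_0+1$ together with $\tilde f_0(1)=v+t_J$ identifies the monodromy explicitly as $j\mapsto j+J \pmod n$. The hypothesis then enters transparently through $\GCD(n,J)=\GCD(n,|a|)$. Your continuity, closing-up, image and injectivity checks all go through. The second injectivity case is cleanest stated as follows: a point at $s=0$ in piece $k$ equals a point at $s=1$ in piece $k'$ iff $j_k=j_{k'+1}$, i.e.\ $k\equiv k'+1 \pmod n$, which for parameters in $(0,1)$ means they are the same parameter $k/n$.

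Your version has several advantages:
\begin{itemize}
\item It is self-contained and does not need the Classification Theorem.
\item It writes the reparametrization $nt-k$ explicitly; the paper's display has $h_j(t,s)$ where $h_j(nt-j,s)$ is meant.
\item Its monodromy count shows at once that $\Gamma(f)$ has $\GCD(n,|a|)$ components for every $n$-valued $f$, not only for power maps.
\end{itemize}
Your loop is the paper's $L(\cdot,1)$ up to the choice of starting branch.

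What the paper's route buys is the explicit homotopy from the power-map loop to $l_f$ through loops with images $\Gamma(H_s)$, which it relies on in Lemma~\ref{lemma:homotopicifonlyif}. You obtain the relevant homotopy class more cheaply from your displacement count: $l_f$ lifts from $(0,t_0)$ to $(n,t_0+a)$, so it represents $(n,a)$ in $\pi_1(\mathbb{T}^2)\cong\Z^2$. That count is correct, though it is not needed for the statement itself.
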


\begin{proof}
    The idea of the proof is similar to the proof of Lemma \ref{lemma:plotisimage}. Let $\phi_{n,a} \colon S^1 \multimap S^1$ be the power map and $H \colon S^1 \times [0,1] \multimap S^1$ be a homotopy such that $H(e^{i2\pi t},0) = \phi_{n,a}(e^{i2\pi t})$ and $ H(e^{i2\pi t},1) = f(e^{i2\pi t})$. By Lemma~\ref{lemma:pathconnected} and the assumption $\GCD(n,\vert a \vert) = 1$, the graph of $\phi_{n,a}$ is path-connected. By Lemma~\ref{lemma:plotisimage} the simple loop corresponding to the given power map is given by $l(t) = (e^{i2\pi n t}, e^{i2\pi a t})$. We will construct a homotopy $L \colon [0,1]\times[0,1] \to \mathbb{T}^2$ such that $L(t,0) = l(t)$ and $L(t,1)$ is the desired $l_f(t)$. 
    
    Let us consider $H(e^{i2\pi t}, s)$ for $s \in [0,1]$. For any $t_0 \in [0,1)$ and $s \in [0,1]$ let $G(t_0,s) = H(e^{i2\pi t_0}, s)$.  The map $G \colon [0,1) \times [0,1] \multimap S^1$ has a simply connected domain, so it splits into $n$ different maps $h_0, h_1, \ldots, h_{n-1} \colon [0,1) \times [0,1] \to S^1$. Each of these maps $h_j$ has a unique continuous extension to $h_j \colon [0,1] \times [0,1] \to S^1$. By this construction, for each $j$ there is some unique integer $p(j)$ such that $h_j(1,s) = h_{p(j)}(0,s)$. The map $p\colon \{0,1,\ldots,n-1 \} \to \{0,1,\ldots,n-1 \}$ is a bijection because of continuity of $H(e^{i2\pi t}, s)$ (with respect to the first variable). We renumber these maps $h_0, h_1, \ldots, h_{n-1}$ so that $h_j(1,s) = h_{j+1}(0,s)$, that is $p(j) = (j+1) (\modulo{n})$, $h_0(0,0) = 1$ and renumber $h_0(0,s)$ in such a way to provide continuity of this map with respect to $s$.  We set
    \begin{equation} \label{eq:correspondingloop}
        L(t,s) = \left\{ \begin{array}{lcl}
 (e^{i2\pi nt},h_j(t,s)) & \mbox{for}
& \frac{j}{n} \leqslant t < \frac{j+1}{n} \\[1.5pt]
 (e^{i2\pi nt},h_{n-1}(t,s)) & \mbox{for}
& t = 1
\end{array}\right.,
    \end{equation}
    where $j \in \{ 0,1,\ldots, n-1 \}$. It is continuous for every $s \in [0,1]$ because of its definition and the definition of $n$-valued homotopy given by $H(e^{i2\pi t}, s)$. We see that $L(t,0) = l(t)$ and we denote $L(t,1) = l_f(t)$. Clearly $\Gamma(f) = \Gamma(H(e^{i2\pi t},1)) = \{ (e^{i2\pi t}, y) \in \mathbb{T}^2;~~ y \in H(e^{i2\pi t},1) \}$ and $\Image(l_f) = \Image(L(t,1)) = \{ (e^{i2\pi t}, y) \in \mathbb{T}^2;~~ y \in H(e^{i2\pi t},1) \} $ are equal sets. Moreover, $l_f$ is a simple loop. This completes the proof of the existence of a simple loop $l_f$ for which $\Gamma(f) = \Image(l_f)$.
\end{proof}




Using the construction from the proof of Lemma~\ref{lemma:homopowerloop} we introduce the following definition. 

\begin{definition}
Let $f \colon S^1 \multimap S^1$ be a $n$-valued map with degree equal to $a$, where $\GCD(n, \vert a \vert) = 1$. The (simple) loop $l_f \colon [0,1] \to \mathbb{T}^2$ given by $l_f(t) = L(t,1)$, where the map $L \colon [0,1] \times [0,1] \to \mathbb{T}^2$ is given by formula~(\ref{eq:correspondingloop}), is called the \textit{loop corresponding to the $n$-valued map} $f$.
\end{definition}

\begin{lemma}
\label{lemma:homotopicifonlyif}
   Two $n$-valued maps $f,g \colon S^1 \multimap S^1$ having connected graphs are homotopic as $n$-valued maps if and only if the corresponding loops $l_f, l_g \colon [0,1] \to \mathbb{T}^2$ are homotopic as loops. 
\end{lemma}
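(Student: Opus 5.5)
Both directions reduce to the degree invariants: the degree of an $n$-valued map on one side, and the homotopy class of a loop in $\mathbb T^2$ (an element of $\pi_1(\mathbb T^2)=\Z^2$) on the other. Using the Classification Theorem~\ref{th:classification} and the invariance of degree (Theorem of \cite{Brown3}), $f\simeq g$ as $n$-valued maps if and only if $\deg f=\deg g$. It therefore suffices to show that the loop $l_f$ corresponding to an $n$-valued map $f$ of degree $a$ with connected graph has homotopy class $(n,a)\in\Z^2$. Then $l_f\simeq l_g$ if and only if $(n,\deg f)=(n,\deg g)$, i.e.\ if and only if $\deg f=\deg g$. Free versus based homotopy is irrelevant because $\pi_1(\mathbb T^2)$ is abelian, as the paper already remarks.

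\textbf{Connectedness forces $\GCD(n,|a|)=1$.} First I would check that the graph of $f$ is connected exactly when $\GCD(n,|a|)=1$, so that $l_f$ is defined. By the Classification Theorem, $f\simeq\phi_{n,a}$, and the number of components of the graph is preserved along the homotopy. Indeed, the permutation $p$ of the branches $h_j$ obtained by going once around $S^1$, as in the proof of Lemma~\ref{lemma:homopowerloop}, varies continuously with $s$ and hence is constant in $s$. Lemma~\ref{lemma:pathconnected} then gives $\GCD(n,|a|)=1$.

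\textbf{Computing the class of $l_f$.} The proof of Lemma~\ref{lemma:homopowerloop} constructs a homotopy $L(t,s)$ of loops in $\mathbb T^2$ from $l(t)=(e^{i2\pi nt},e^{i2\pi at})$ to $l_f$. So $l_f$ is freely homotopic to $l$, whose lift is the straight path from $(0,0)$ to $(n,a)$. Hence $[l_f]=(n,a)$.

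The step I expect to need the most care is checking that $L(\cdot,s)$ really is a closed loop for every $s$, and that the base point moves continuously. This amounts to verifying that the renumbering with $h_j(1,s)=h_{j+1}(0,s)$ can be made uniformly in $s$; the constancy of $p$ in $s$ noted above is what makes this work. As a cross-check, I would also compute the class directly. The first coordinate $e^{i2\pi nt}$ winds $n$ times. For the second coordinate, concatenating the lifts of $h_0,\dots,h_{n-1}$ at $s=1$ gives total displacement $nv+J=\deg f=a$, by unwinding the definition~\eqref{deg} of degree: going through all $n$ branches in cyclic order returns $\tilde f_0$ to itself shifted by the integer $a$.

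\textbf{The two directions.} For ($\Rightarrow$): if $f\simeq g$ then $\deg f=\deg g=a$, so $[l_f]=(n,a)=[l_g]$. For ($\Leftarrow$): if $l_f\simeq l_g$ then their classes agree, so $(n,\deg f)=(n,\deg g)$. Hence $\deg f=\deg g$, and the Classification Theorem gives $f\simeq\phi_{n,a}\simeq g$.
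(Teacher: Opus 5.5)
Your proposal is correct and follows essentially the paper's route. Both directions are reduced to equality of degrees via the Classification Theorem~\ref{th:classification}, with $l_f$ identified as the class $(n,\deg f)\in\pi_1(\mathbb{T}^2)=\Z^2$ through the homotopy $L$ from the proof of Lemma~\ref{lemma:homopowerloop}; your extra checks (a connected graph forces $\GCD(n,|a|)=1$, the branch permutation is constant in $s$, free versus based homotopy) only make explicit what the paper's proof leaves implicit.
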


\begin{proof}
If two $n$-valued maps $f,g \colon S^1 \multimap S^1$ are homotopic, they must have the same degree denoted by $a$. By Lemma~\ref{lemma:homopowerloop} there exist corresponding loops $l_f, l_g \colon [0,1] \to \mathbb{T}^2$ for which $\Gamma(f) = \Image(l_f)$ and $\Gamma(g) = \Image(l_g)$. As these loops are represented by some paths in $\R^2$ between the same points $(0,0)$ and $(n,a)$, they must be homotopic to each other due to the classification of simple loops in the torus.
    
Now let $l_f, l_g \colon [0,1] \to \mathbb{T}^2$ be homotopic loops corresponding to some $n$-valued maps $f,g \colon S^1 \multimap S^1$. Due to the classification of simple loops in the torus, they must be represented by paths between the same points $(0,0)$ and $(n,a)$. We assumed they correspond to some $n$-valued maps $f,g$, so $\Gamma(f) = \Image(l_f)$ and $\Gamma(g) = \Image(l_g)$. Due to  Lemma~\ref{lemma:homopowerloop} these maps must be $n$-valued with degree equal to $a$, so they are homotopic.
\end{proof}

\begin{remark}
Observe that not every simple loop arises as the graph of an 
$n$-valued map. For instance there is no $n$-valued map corresponding to the image of the canonical loop $\alpha(t) = (e^{0}, e^{i2\pi t})$. Indeed, note that the necessary condition for an image of the simple loop to be equal to the graph of some $n$-valued function is bijectivity of the first coordinate of the path corresponding to this loop. In particular, the image of nullhomotopic loop is not a graph of an $n$-valued map.
\end{remark}

\section{The algebraic Nielsen number}\label{AlgNielsen}

This section is devoted to introducing the (algebraic) Nielsen coincidence number (Definition \ref{algNielsen} below) for of an $(n,m)$-valued pair of maps of the circle. We will make use of the definition of the intersection index given in  \cite{McCord} by McCord. It is defined homologically for maps $f \colon X_1 \to Y$, $g \colon X_2 \to Y$, where $X_1,X_2$ and $Y$ are compact, orientable manifolds of dimensions $p,q$ and $n = p+q$ respectively. The Nielsen intersection index $\Ind(f,g:J)$ is defined for an isolated set of McCord intersections $J \subset \Int(f:g) = \{(x_1,x_2) \in X_1 \times X_2;~~f(x_1) = g(x_2)\}$ (see \cite{McCord} for the details). 

\begin{definition}\label{def:rel}
Two points $(x_1, x_2) $, $(x_3, x_4) \in \Int(f:g)$ are related if there exist paths $\alpha$ in $X_1$ from $x_1$ to $x_3$ and $\beta$ in $X_2$ from $x_2$ and $x_4$ such that $f \circ \alpha$ and $g \circ \beta$ are homotopic relative to the endpoints.
\end{definition}

We would like to adapt McCord's theory to our case, but our set of intersections given by (\ref{graphcoin}) is defined in a slightly different way than  $\Int(f\colon g)$, thus  we first  show the correspondence between both intersection sets in our setting ($X_1=X_2= S^1$, $Y= \mathbb{T}^2$).

Let $(f,g) \colon S^1 \multimap S^1$ be an $(n,m)$-valued pair of maps and $l_f, l_g \colon [0,1] \to \mathbb{T}^2$ be  simple loops corresponding to them by the construction in Lemma \ref{lemma:homopowerloop}. We associate with every graph intersection point $(x,y) \in \Coin_{X \times Y}(f:g)$  the McCord intersection point $(x_1,x_2) \in \Int(l_f:l_g)$ for which 
\begin{equation} \label{formula:correspondence}
    l_f(x_1) = l_g(x_2) = (x,y).
\end{equation}

\begin{lemma}
 Let $(f,g) \colon S^1 \multimap S^1$ be an $(n,m)$-valued pair of maps with degrees equal to $a$ and $b$, respectively, where $\GCD(n,\vert a \vert) = 1$, $\GCD(m,\vert b \vert) = 1$  and let $l_f, l_g \colon [0,1] \to \mathbb{T}^2$ be the simple loops corresponding to them. Two graph intersection points $(x,y)$ and $(\Tilde x, \Tilde y)$ of $(f,g)$ are related if and only if the McCord intersection points $(x_1,x_2)$ and $(x_3,x_4)$ of $(l_f,l_g)$ corresponding to them (by the equality (\ref{formula:correspondence})) are related in the sense of Definition \ref{def:rel}.
\end{lemma}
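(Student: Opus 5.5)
We translate paths in the domain circle into paths in the parameter intervals of the loops, and back, using the explicit form of $l_f$ and $l_g$ from Lemma~\ref{lemma:homopowerloop}. Write $\pi\colon\mathbb T^2\to S^1$ for the projection onto the first coordinate. By \eqref{eq:correspondingloop} we have $\pi(l_f(t))=e^{i2\pi nt}$. Hence $\pi\circ l_f$ is an $n$-fold covering of $S^1$ once the endpoints of $[0,1]$ are identified, and likewise $\pi\circ l_g$ is an $m$-fold covering. The key observation is that a lift of a path $p$ in the domain $S^1$ through $\pi\circ l_f$ corresponds exactly to one branch of a splitting of $f\circ p$. Explicitly, if $\tilde\alpha$ is a lift, so that $\pi\circ l_f\circ\tilde\alpha=p$, then $f_j:=\mathrm{pr}_2\circ l_f\circ\tilde\alpha$ is a continuous selection of $f\circ p$. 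Conversely, every branch $f_j$ with $f_j(0)=y$ arises this way, from the unique lift starting at the parameter $x_1$ with $l_f(x_1)=(x,y)$.

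\emph{($\Rightarrow$)} Suppose $(x,y)$ and $(\tilde x,\tilde y)$ are related via a path $p$ and branches $f_j,g_k$ that are homotopic rel endpoints. Lift $p$ through $\pi\circ l_f$ starting at $x_1$ to obtain $\alpha$, and through $\pi\circ l_g$ starting at $x_2$ to obtain $\beta$. Then $l_f\circ\alpha=(p,f_j)$ and $l_g\circ\beta=(p,g_k)$ as paths in $\mathbb T^2=S^1\times S^1$. Because the branches start at $y$ and end at $\tilde y$, and the correspondence \eqref{formula:correspondence} is injective (the loops are simple, which is Lemma~\ref{lemma:homopowerloop}), the lifts end at $x_3$ and $x_4$ respectively. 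A homotopy rel endpoints $M$ from $f_j$ to $g_k$ gives the homotopy $(p,M)$ from $l_f\circ\alpha$ to $l_g\circ\beta$, which is again rel endpoints because the first coordinate stays fixed. So the McCord points are related in the sense of Definition~\ref{def:rel}.

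\emph{($\Leftarrow$)} Suppose we are given $\alpha$ from $x_1$ to $x_3$ and $\beta$ from $x_2$ to $x_4$, with $l_f\circ\alpha\simeq l_g\circ\beta$ rel endpoints. Projecting the homotopy by $\pi$ shows that $p:=\pi l_f\alpha$ and $q:=\pi l_g\beta$ are homotopic rel endpoints in $S^1$, but they need not be equal, and this is the main obstacle. To handle it, take the projected homotopy $K_s$ from $p$ to $q$. Lift $K$ through the covering $\pi\circ l_g$, starting from $\beta$ at the level $s=1$; the homotopy lifting property for coverings gives a homotopy of $\beta$ rel endpoints to a path $\beta'$ with $\pi l_g\beta'=p$. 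The endpoints stay fixed because the lifted endpoint paths project to constant paths and so are constant. Moreover $l_g\circ\beta'$ is still homotopic rel endpoints to $l_f\circ\alpha$. The reason is that the homotopy from $\beta$ to $\beta'$ keeps the endpoints fixed, so its image in $\mathbb T^2$ is a homotopy rel endpoints.

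Now $l_f\circ\alpha=(p,f_j)$ and $l_g\circ\beta'=(p,g_k)$ are homotopic rel endpoints in $\mathbb T^2$. Projecting onto the second factor shows that $f_j\simeq g_k$ rel endpoints. These are branches of the splittings of $f\circ p$ and $g\circ p$, they start at $y$ and end at $\tilde y$, and $p$ runs from $x$ to $\tilde x$. Hence $(x,y)$ and $(\tilde x,\tilde y)$ are related. The steps that need care are the covering-space facts, namely that $\pi\circ l_f$ really is a covering after identifying the endpoints of $[0,1]$, and that lifts correspond to branches of splittings. Both follow from the piecewise definition \eqref{eq:correspondingloop}, since each piece $h_j$ is a continuous branch and consecutive pieces glue by $h_j(1,s)=h_{j+1}(0,s)$.
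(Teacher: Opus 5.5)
Your proof is correct and follows essentially the same route as the paper. For ($\Rightarrow$), your lift of $p$ through $\pr_1\circ l_f$ starting at $x_1$ is exactly the paper's path $\bar l_f^{-1}\circ(p,f_j)$, and the homotopy $(p,M)$ is the same. For ($\Leftarrow$), you use the paper's device: lift the projected homotopy through the covering $\pr_1\circ l_g$ to replace $\beta$ by $\beta'$ with first coordinate $p$, then project to the second factor. The only difference is cosmetic: you get the covering property from the explicit first coordinate $e^{i2\pi nt}$ of $l_f$, while the paper deduces it from local splitting.
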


\begin{proof}
Identifying the parameter circle of each corresponding loop with
$S^1=[0,1]/(0\sim 1)$, we may regard $l_f$ and $l_g$ as maps
\[
l_f,l_g \colon S^1 \to \mathbb T^2 .
\]
Since $l_f$ and $l_g$ are simple loops and
\[
\Image(l_f)=\Gamma(f), \qquad \Image(l_g)=\Gamma(g),
\]
they induce homeomorphisms
\[
\bar l_f \colon S^1 \to \Gamma(f), \qquad
\bar l_g \colon S^1 \to \Gamma(g).
\]
Let
\[
\pr_1,\pr_2 \colon \mathbb T^2=S^1\times S^1 \to S^1
\]
denote the coordinate projections, and put
\[
\pi_f=\pr_1\circ \bar l_f \colon S^1 \to S^1, \qquad
\pi_g=\pr_1\circ \bar l_g \colon S^1 \to S^1 .
\]
Let us remark the codomain of the projection $\pr_1$ corresponds to space $X$ from definition of graph Nielsen relation and the codomain of the projection $\pr_2$ corresponds to the space $Y$ from the same definition.
Since $f$ and $g$ are locally split, the restrictions
$\pr_1|_{\Gamma(f)}$ and $\pr_1|_{\Gamma(g)}$ are local homeomorphisms.
Hence $\pi_f$ and $\pi_g$ are covering maps.

\smallskip
\noindent{\it ($\Rightarrow$)}
Assume that $(x,y)$ and $(\tilde x,\tilde y)$ are related by the graph Nielsen
relation. Then there exists a path
\[
p \colon [0,1]\to S^1
\]
with $p(0)=x$, $p(1)=\tilde x$, and for the splittings
\[
f\circ p=\{f_1,\dots,f_n\}, \qquad g\circ p=\{g_1,\dots,g_m\},
\]
there exist indices $j,k$ such that
\[
f_j(0)=g_k(0)=y,\qquad f_j(1)=g_k(1)=\tilde y,
\]
and $f_j$ and $g_k$ are homotopic relative to the endpoints.

Define paths in the graphs by
\[
\sigma_f(t)=(p(t),f_j(t)) \in \Gamma(f), \qquad
\sigma_g(t)=(p(t),g_k(t)) \in \Gamma(g).
\]
Now set
\[
\alpha=\bar l_f^{-1}\circ \sigma_f,\qquad
\beta=\bar l_g^{-1}\circ \sigma_g.
\]
Then $\alpha,\beta\colon [0,1]\to S^1$ are paths satisfying
\[
\alpha(0)=x_1,\ \alpha(1)=x_3,\qquad
\beta(0)=x_2,\ \beta(1)=x_4,
\]
because
\[
\bar l_f(x_1)=(x,y),\ \bar l_f(x_3)=(\tilde x,\tilde y),\qquad
\bar l_g(x_2)=(x,y),\ \bar l_g(x_4)=(\tilde x,\tilde y).
\]
Moreover,
\[
\bar l_f\circ \alpha=\sigma_f=(p,f_j),\qquad
\bar l_g\circ \beta=\sigma_g=(p,g_k).
\]

Let
\[
M\colon [0,1]\times [0,1]\to S^1
\]
be a homotopy from $f_j$ to $g_k$ relative to the endpoints. Then
\[
H(t,u)=(p(t),M(t,u))
\]
is a homotopy in $\mathbb T^2$ from $\bar l_f\circ \alpha$ to
$\bar l_g\circ \beta$, relative to the endpoints. Therefore
$(x_1,x_2)$ and $(x_3,x_4)$ are related in the sense of
Definition~\ref{def:rel}.

\smallskip
\noindent{\it ($\Leftarrow$)}
Assume now that $(x_1,x_2)$ and $(x_3,x_4)$ are related in the sense of
Definition~\ref{def:rel}. Then there exist paths
\[
\alpha,\beta \colon [0,1]\to S^1
\]
such that
\[
\alpha(0)=x_1,\ \alpha(1)=x_3,\qquad
\beta(0)=x_2,\ \beta(1)=x_4,
\]
and $\bar l_f\circ \alpha$ and $\bar l_g\circ \beta$ are homotopic
relative to the endpoints.

Let
\[
K\colon [0,1]\times [0,1]\to \mathbb T^2
\]
be such a homotopy, so that
\[
K(t,0)=\bar l_g(\beta(t)),\qquad
K(t,1)=\bar l_f(\alpha(t)).
\]
Projecting to the first coordinate, we obtain a homotopy
\[
F=\pr_1\circ K \colon [0,1]\times [0,1]\to S^1
\]
from
\[
q:=\pr_1\circ \bar l_g\circ \beta
\]
to
\[
p:=\pr_1\circ \bar l_f\circ \alpha,
\]
relative to the endpoints.

\begin{minipage}[t]{.5\textwidth}
\begin{center}
\begin{tikzcd}
& S^1 \arrow[dd, "\pi_g"] \\
& \\
\textcolor{white}{.}[0,1] \arrow[uur, "\beta"] \arrow[r, "q"] & S^1
\end{tikzcd}   
\end{center}
\end{minipage}
\begin{minipage}[t]{.5\textwidth}
\begin{center}
\begin{tikzcd}
& S^1 \arrow[dd, "\pi_g"] \\
& \\
\textcolor{white}{.}[0,1] \times [0,1] \arrow[uur, "\widetilde{F}"] \arrow[r, "F"] & S^1
\end{tikzcd}     
\end{center}

\end{minipage}

Since $\beta$ is a lift of $q$ through the
covering map $\pi_g$, the homotopy lifting property gives a unique lift
\[
\widetilde F \colon [0,1]\times [0,1]\to S^1
\]
such that
\[
\pi_g\circ \widetilde F = F,\qquad \widetilde F(t,0)=\beta(t).
\]

Define
\[
\beta'(t)=\widetilde F(t,1).
\]
Then, for every $t\in[0,1]$, we have
\[
\pi_g(\beta'(t))
=
\pi_g(\widetilde F(t,1))
=
F(t,1)
=
p(t).
\]
Hence
\[
\pi_g\circ \beta' = p.
\]

Also, $F(0,u)=x$ and $F(1,u)=\tilde x$ for all $u$, and the fibers
$\pi_g^{-1}(x)$ and $\pi_g^{-1}(\tilde x)$ are discrete. So the paths
$u\mapsto \widetilde F(0,u)$ and $u\mapsto \widetilde F(1,u)$ are constant, because they are continuous maps into a discrete space.
Hence
\[
\beta'(0)=x_2,\qquad \beta'(1)=x_4.
\]
Moreover, $\beta$ and $\beta'$ are homotopic relative to the endpoints via
$\widetilde F$, so $\bar l_g\circ \beta$ and $\bar l_g\circ \beta'$ are
homotopic relative to the endpoints. Therefore
\[
\bar l_f\circ \alpha \simeq \bar l_g\circ \beta'
\quad\text{rel endpoints.}
\]

Now both paths have the same first coordinate $p$, because
\[
\pr_1\circ \bar l_f\circ \alpha = p
\]
by definition, and
\[
\pr_1\circ \bar l_g\circ \beta' = \pi_g\circ \beta' = p.
\]
Define
\[
f_j=\pr_2\circ \bar l_f\circ \alpha,\qquad
g_k=\pr_2\circ \bar l_g\circ \beta'.
\]
Then
\[
\bar l_f\circ \alpha=(p,f_j),\qquad
\bar l_g\circ \beta'=(p,g_k).
\]
Since $(p(t),f_j(t))\in \Gamma(f)$ for every $t$, we have $f_j(t)\in f(p(t))$,
so $f_j$ is one branch of a splitting of $f\circ p$; similarly, $g_k$ is one
branch of a splitting of $g\circ p$.

Finally, projecting a homotopy between $(p,f_j)$ and $(p,g_k)$ to the second
coordinate gives a homotopy from $f_j$ to $g_k$ relative to the endpoints.
Also,
\[
f_j(0)=y=g_k(0),\qquad f_j(1)=\tilde y=g_k(1).
\]
Thus $(x,y)$ and $(\tilde x,\tilde y)$ are related by the graph Nielsen
relation.

This proves the equivalence.
\end{proof}

\begin{corollary}
Let $(f,g) \colon S^1 \multimap S^1$ be an $(n,m)$-valued pair of maps and let $l_f, l_g \colon [0,1] \to \mathbb{T}^2$ be the simple loops corresponding to them. Then there is a bijective correspondence between the graph intersection classes of $(f,g)$ and the McCord intersection classes of $l_f,l_g$. 
\end{corollary}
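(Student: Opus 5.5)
The corollary follows from the preceding lemma once we know that the correspondence \eqref{formula:correspondence} is a bijection between $\Coin_{X\times Y}(f:g)$ and $\Int(l_f:l_g)$. With that in hand, the lemma says the bijection carries the graph Nielsen relation exactly onto McCord's relation of Definition~\ref{def:rel}. A bijection of sets that preserves and reflects an equivalence relation induces a bijection of the quotient sets, and those quotients are the graph intersection classes and the McCord intersection classes respectively.

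First I will check that the correspondence is well defined and bijective. As in the proof of the lemma, regard $l_f,l_g$ as maps $S^1\to\mathbb T^2$. Because they are simple loops with $\Image(l_f)=\Gamma(f)$ and $\Image(l_g)=\Gamma(g)$ (Lemma~\ref{lemma:homopowerloop}), they induce homeomorphisms $\bar l_f\colon S^1\to\Gamma(f)$ and $\bar l_g\colon S^1\to\Gamma(g)$. A graph intersection point $(x,y)$ is exactly a point of $\Gamma(f)\cap\Gamma(g)$, so $x_1=\bar l_f^{-1}(x,y)$ and $x_2=\bar l_g^{-1}(x,y)$ are uniquely determined, and $(x_1,x_2)\in\Int(l_f:l_g)$. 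Conversely, for $(x_1,x_2)\in\Int(l_f:l_g)$ the common value $l_f(x_1)=l_g(x_2)$ lies in $\Gamma(f)\cap\Gamma(g)=\Coin_{X\times Y}(f:g)$. Injectivity of $\bar l_f$ and $\bar l_g$ makes these two assignments mutually inverse.

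Second, I will pass to classes. The preceding lemma gives that $(x,y)\sim(\tilde x,\tilde y)$ if and only if the corresponding McCord points are related. Hence the bijection sends each graph intersection class onto a single McCord class, and the inverse map does the same in the other direction. This yields the claimed bijection between classes.

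The main obstacle is the set of hypotheses. The lemma and the construction of $l_f$ require $\GCD(n,|a|)=\GCD(m,|b|)=1$, but the corollary is stated without it. I would read those assumptions as implicit, since the loops $l_f,l_g$ are only defined under them. If a general statement is wanted, I would use Lemma~\ref{lemma:graphsplit}-style graph splittings to decompose $\Gamma(f)$ and $\Gamma(g)$ into connected components, each the image of a simple loop. One then applies the argument componentwise, with McCord intersections taken between disjoint unions of circles.
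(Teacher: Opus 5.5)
Your proposal is correct and follows the paper's route: the paper gives no separate proof, treating the corollary as immediate from the preceding lemma via the point correspondence \eqref{formula:correspondence}, which is exactly what you do. You additionally spell out two things the paper leaves tacit: that this correspondence is a bijection $\Coin_{X\times Y}(f:g)\leftrightarrow\Int(l_f:l_g)$ (via the homeomorphisms $\bar l_f,\bar l_g$), and that the hypotheses $\GCD(n,|a|)=\GCD(m,|b|)=1$ are implicit in the very existence of $l_f,l_g$.
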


Now we are ready to define the Nielsen number in the case of maps of the circle.  We use the intersection index given in \cite{McCord} by McCord, recalled on the beginning of this chapter.

\begin{definition}\label{algNielsen}
A graph intersection class $C_0 \subset \Coin_{X \times Y}(f:g)$ of an $(n,m)$-valued pair of maps $(f,g)$ is \textit{(algebraically) essential} if and only if $\Ind(l_f : l_g, C_0') \neq 0$, where $C_0' \subset \Int(l_f:l_g)$ corresponds to the graph intersection class $C_0$. Otherwise, the class is \textit{(algebraically) inessential}. The \textit{Nielsen coincidence number} $ N(f:g)$ is the number of essential graph intersection classes.
\end{definition}

Note that $N(f:g)$  is a homotopic invariant because of the homotopy invariance of the Nielsen intersection index.

\section{Main result}\label{sec:main}

We are now in a position to prove the main result of this paper.

\begin{theorem}\label{main}
Let $(f,g)$ be an $(n,m)$-valued pair of self-maps of $S^1$ with degrees equal to $a, b$ respectively. Then
\begin{equation*}
     N(f:g) = \vert am - bn \vert.
\end{equation*}
\end{theorem}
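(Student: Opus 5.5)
Since $N(f:g)$ is a homotopy invariant and, by the Classification Theorem~\ref{th:classification}, $f\simeq\phi_{n,a}$ and $g\simeq\phi_{m,b}$, the plan is to prove the formula for the pair of power maps $(\phi_{n,a},\phi_{m,b})$.

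\emph{Degenerate case.} If $am=bn$, Theorem~\ref{th:domainS1} gives a homotopic pair $(\phi_{n,a},\phi_{m,b}+\varepsilon)$ with no graph intersection points. Hence there are no classes at all, and $N(f:g)=0=|am-bn|$.

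\emph{Reduction to connected graphs.} Next I reduce to the case $\GCD(n,|a|)=1=\GCD(m,|b|)$, where the torus loops $l_f,l_g$ of Section~\ref{sec:intersection} are defined. Put $w=\GCD(n,|a|)$. Lemma~\ref{lemma:graphsplit} splits $\Gamma(\phi_{n,a})$ into $w$ components, each the graph of an $\frac nw$-valued map of degree $\frac aw$. Each component is a rotated copy of $\Gamma(\phi_{n/w,a/w})$, and I will use the loop of that copy. The graph Nielsen relation only compares branches lying in a single component, so the graph intersection classes of the pair are the disjoint union of the classes of the component pairs. Therefore $N$ is additive over components. Each component pair has value $|\frac aw m-b\frac nw|=|am-bn|/w$ (after doing the same for $g$), and summing over the $w$ components gives $|am-bn|$; Lemma~\ref{lemma:splitnumber} confirms the matching point count. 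So it suffices to treat connected graphs. Here the algebraic definition needs care: for disconnected graphs, Definition~\ref{algNielsen} must be read componentwise via a loop for each component. I would state this convention explicitly.

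\emph{Connected case.} Now $l_f$ and $l_g$ are the straight loops with homology classes $(n,a)$ and $(m,b)$ in $\mathbb T^2$ (Definition~\ref{def:loopcorr}, Lemma~\ref{lemma:plotisimage}). By Theorem~\ref{th:numberofcoinxy} and Corollary~\ref{cor:intloops} they meet in exactly $|am-bn|$ points. By Lemma~\ref{lemma:difclass}, each point forms its own graph intersection class, and by the correspondence lemma each also forms its own McCord class. Each intersection is transverse, since the two lines have distinct slopes $a/n\neq b/m$. So each McCord index is $\pm1$, with sign given by $\operatorname{sign}\det\begin{pmatrix} n & m\\ a & b\end{pmatrix}$. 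This sign is the same at every point because both loops are straight lines with constant tangent vectors. Hence all $|am-bn|$ classes are essential, and their indices sum to the algebraic intersection number $nb-am$, consistent with the homological definition.

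\emph{Main obstacle.} The hardest step is verifying that McCord's local index at a transverse point of two immersed circles in $\mathbb T^2$ equals the local orientation sign. This means unpacking McCord's homological definition for $X_1=X_2=S^1$ and $Y=\mathbb T^2$ and comparing it with the standard transverse computation. I would cite McCord's normalization, namely that the index of a transverse isolated intersection is $\pm1$, and back it with the global check that the indices sum to the homological intersection number $(n,a)\cdot(m,b)$. Since every class is a single point and the number of points equals $|nb-am|$, this forces every index to be $\pm1$ with a common sign, so no class is inessential.
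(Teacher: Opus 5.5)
Your argument for the case $am\neq bn$, $\GCD(n,|a|)=\GCD(m,|b|)=1$ is essentially the paper's proof. You pass to $(\phi_{n,a},\phi_{m,b})$ by Theorem~\ref{th:classification}, get singleton classes from Lemma~\ref{lemma:difclass} and the correspondence with McCord classes, use transversality with constant sign $\operatorname{sign}(nb-am)$ to get index $\pm1$, and count points with Theorem~\ref{th:numberofcoinxy}.

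The difference is scope. The paper's proof begins ``Assume first that $am\neq bn$, $\GCD(n,|a|)=1$, $\GCD(m,|b|)=1$'' and never returns to the other cases, although the theorem is stated for arbitrary degrees. Moreover, Definition~\ref{algNielsen} only makes sense when the loops $l_f,l_g$ exist, i.e.\ when both graphs are connected. Your degenerate case (the argument of Proposition~\ref{prop:N=0} applied to $N$) and your componentwise reduction fill exactly this hole. A graph Nielsen path $t\mapsto(p(t),f_j(t))$ stays in one component of $\Gamma(f)$, and likewise for $g$. So the classes split over the $w\cdot\GCD(m,|b|)$ component pairs, each a translated power-map pair to which the connected argument applies.

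The price is that your componentwise convention must itself be shown to be homotopy invariant. You use invariance in your very first step, and the paper only supplies it for connected graphs. This is easy but should be written out. The graph of an $n$-valued homotopy $H$ on $S^1\times[0,1]$ is a finite covering of $S^1\times[0,1]$, so its components restrict bijectively to the components of each slice $\Gamma(H_t)$. Hence the component loops move by homotopies in $\mathbb T^2$, and McCord's invariance applies pair by pair.

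One correction to your last paragraph. The global identity $\sum\Ind=nb-am$ over $|nb-am|$ single-point classes does not by itself force each index to be $\pm1$, since integers such as $2$ and $0$ could compensate. Nonvanishing comes from the local normalization at a transverse intersection. The global sum then only confirms the common sign, which you already have from the constant determinant.
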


\begin{proof}
Assume first that
\[
am \neq bn, \qquad \GCD(n,\vert a \vert)=1, \qquad \GCD(m,\vert b \vert)=1.
\]
By the Classification Theorem~\ref{th:classification}, we have
\[
f \simeq \phi_{n,a}, \qquad g \simeq \phi_{m,b}.
\]
Since the Nielsen coincidence number is a homotopy invariant, it is enough to compute
\[
N(\phi_{n,a}:\phi_{m,b}).
\]

By Lemma~\ref{lemma:plotisimage}, the graphs of $\phi_{n,a}$ and $\phi_{m,b}$ are the images of the corresponding simple loops
\[
l_{n,a},\, l_{m,b}\colon [0,1]\to \mathbb T^2.
\]
Moreover, by Lemma~\ref{lemma:difclass}, every graph intersection point of the pair
\[
(\phi_{n,a},\phi_{m,b})
\]
forms a separate graph intersection class. Hence every graph intersection class consists of a single point. By the correspondence between graph intersection classes and McCord intersection classes established in Section~\ref{AlgNielsen}, each corresponding McCord intersection class also consists of a single point.

Let $C_0' \subset \Int(l_{n,a}:l_{m,b})$ be such a McCord intersection class. Since $am\neq bn$, the loops $l_{n,a}$ and $l_{m,b}$ are represented on the universal covering $\mathbb R^2$ by straight lines with distinct direction vectors $(n,a)$ and $(m,b)$. Therefore every intersection is transversal, and the local intersection sign is the same at every intersection point. 
Indeed, it is determined by the sign of
\[
\det
\begin{pmatrix}
n & m\\
a & b
\end{pmatrix}
= nb-am.
\]
Consequently,
\[
\Ind(l_{n,a}:l_{m,b};C_0')=\pm 1 \neq 0.
\]
Thus every graph intersection class of $(\phi_{n,a},\phi_{m,b})$ is algebraically essential.

It follows that
\[
N(\phi_{n,a}:\phi_{m,b})
\]
is equal to the number of graph intersection classes of $(\phi_{n,a},\phi_{m,b})$. Since each such class consists of exactly one point, by Theorem~\ref{th:numberofcoinxy} we obtain
\[
N(\phi_{n,a}:\phi_{m,b})
=
\# \Coin_{X\times Y}(\phi_{n,a}:\phi_{m,b})
=
|am-bn|.
\]
Finally, by homotopy invariance,
\[
N(f:g)=N(\phi_{n,a}:\phi_{m,b})=|am-bn|.
\]
\end{proof}

\begin{corollary}
The minimal number for graph intersections of $(n,m)$-valued pair of maps $(f,g)$ with degrees equal to $a, b$ respectively, is the number of graph intersections of the pair of power maps $(\phi_{n,a}, \phi_{m,b})$ and
\begin{equation}\label{MC}
    \MC_{X \times Y}(f:g) = \# \Coin_{X \times Y} (\phi_{n,a} : \phi_{m,b}) = \vert am - bn \vert.
\end{equation}
\end{corollary}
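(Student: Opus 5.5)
The corollary asserts $\MC_{X\times Y}(f:g)=|am-bn|$. I will prove it by trapping this quantity between two equal bounds. For the upper bound, the Classification Theorem~\ref{th:classification} gives $f\simeq\phi_{n,a}$ and $g\simeq\phi_{m,b}$, so the pair $(\phi_{n,a},\phi_{m,b})$ lies in the homotopy class of $(f,g)$. If $am\neq bn$, Theorem~\ref{th:numberofcoinxy} gives $\#\Coin_{X\times Y}(\phi_{n,a}:\phi_{m,b})=|am-bn|$, hence $\MC_{X\times Y}(f:g)\le|am-bn|$. If $am=bn$, Theorem~\ref{th:domainS1} supplies a homotopic pair $(\phi_{n,a},\phi_{m,b}+\varepsilon)$ with no domain coincidences, hence no graph intersections, so $\MC_{X\times Y}(f:g)=0=|am-bn|$. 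In this degenerate case the middle term $\#\Coin_{X\times Y}(\phi_{n,a}:\phi_{m,b})$ of \eqref{MC} must be read as the count for the slightly translated power pair.

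For the lower bound when $am\neq bn$, I will show that the algebraic number $N(f:g)$ of Theorem~\ref{main} bounds $\#\Coin_{X\times Y}(f':g')$ from below for every pair $(f',g')\simeq(f,g)$. By homotopy invariance of the McCord index, $N(f':g')=N(f:g)=|am-bn|$. Each essential graph intersection class of $(f',g')$ has nonzero index and is therefore nonempty, and distinct classes are disjoint. It follows that $\#\Coin_{X\times Y}(f':g')\ge N(f':g')=|am-bn|$. Taking the minimum over all such pairs gives $\MC_{X\times Y}(f:g)\ge|am-bn|$, which together with the upper bound proves \eqref{MC}.

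The main obstacle is that the loop correspondence of Section~\ref{sec:intersection} is established only when $\GCD(n,|a|)=\GCD(m,|b|)=1$, and the proof of Theorem~\ref{main} also treats only that case. If $w=\GCD(n,|a|)>1$, I would use the graph splitting of Lemma~\ref{lemma:graphsplit} to write $\Gamma(f')$ as a union of $w$ graphs of $\tfrac nw$-valued maps of degree $\tfrac aw$, each with connected graph. Since graph splittings should persist along homotopies, each component is homotopic to $\phi_{n/w,a/w}$. Applying the coprime case to each component paired with each component of $g'$ gives at least
\[
w\,w'\,\Bigl|\tfrac{a}{w}\tfrac{m}{w'}-\tfrac{b}{w'}\tfrac{n}{w}\Bigr|=|am-bn|
\]
graph intersections in total, where $w'=\GCD(m,|b|)$; this count matches Lemma~\ref{lemma:splitnumber}. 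The one point here that needs a careful argument is that graph splittings persist under homotopy. I would prove it by lifting along the covering $\Gamma(H)\to S^1\times[0,1]$ and using that the number of components is a homotopy invariant.
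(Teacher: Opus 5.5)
Your proof is correct, and its core is the same sandwich the paper relies on; the corollary is stated without proof right after Theorem~\ref{main}. The power pair lies in the homotopy class and has $|am-bn|$ graph intersections by Theorem~\ref{th:numberofcoinxy}. Every homotopic pair has at least $N(f:g)=|am-bn|$ graph intersections, because essential classes are nonempty and pairwise disjoint.

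What you add is coverage the paper does not provide. The paper proves Theorem~\ref{main} only under $am\neq bn$ and $\GCD(n,|a|)=\GCD(m,|b|)=1$. Indeed, the loops $l_f,l_g$, and hence $N(f:g)$, are only constructed under the coprimality hypothesis, so the corollary as derived there silently inherits that restriction.

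Your reduction closes that gap without ever needing $N$ outside the coprime case. You lift a homotopy to the $n$-sheeted covering $\Gamma(H)\to S^1\times[0,1]$ to see that $\Gamma(f')$ breaks into $w$ graphs of $\tfrac{n}{w}$-valued maps of degree $\tfrac{a}{w}$. You then add the coprime bounds over the $ww'$ sets $\Gamma(f'_i)\cap\Gamma(g'_j)$. When you write it up, state explicitly that these $ww'$ sets are pairwise disjoint and exhaust $\Coin_{X\times Y}(f':g')$. This is immediate, since distinct components of a graph are disjoint.

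You are also right that for $am=bn$ the middle term of \eqref{MC} is literally infinite, since Theorem~\ref{th:domainS1} gives $\Coin_X(\phi_{n,a}:\phi_{m,b})=S^1$. Only $\MC_{X\times Y}(f:g)=0=|am-bn|$ survives in that case, and your translated pair proves it.
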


We can view the above as a Wecken-type property for $S^1$ for $(n,m)$-valued pairs of maps:
\begin{corollary}
For every $(n,m)$-valued pair of maps $f,g \colon S^1 \multimap S^1$, there exists a homotopic pair $\Tilde f,\Tilde g \colon S^1 \multimap S^1$ such that $\# \Coin_{X \times Y} (\Tilde f: \Tilde g) =  N(f,g)$.
\end{corollary}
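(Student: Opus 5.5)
The statement to prove is the Wecken-type corollary: every $(n,m)$-valued pair $(f,g)$ of self-maps of $S^1$ is homotopic to a pair $(\tilde f,\tilde g)$ with $\#\Coin_{X\times Y}(\tilde f:\tilde g)=N(f:g)$. The plan is to take $(\tilde f,\tilde g)$ to be the pair of power maps, perturbed in the degenerate case, and compare the count of graph intersections with Theorem~\ref{main}.

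\emph{Step 1 (reduction to power maps).} Let $a=\deg f$ and $b=\deg g$. By the Classification Theorem~\ref{th:classification}, $f\simeq\phi_{n,a}$ and $g\simeq\phi_{m,b}$. Hence the pair $(f,g)$ is homotopic to $(\phi_{n,a},\phi_{m,b})$ in the sense of Definition~\ref{def:hom}.

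\emph{Step 2 (case $am\neq bn$).} Set $(\tilde f,\tilde g)=(\phi_{n,a},\phi_{m,b})$. Theorem~\ref{th:numberofcoinxy} gives $\#\Coin_{X\times Y}(\phi_{n,a}:\phi_{m,b})=|am-bn|$. Theorem~\ref{main} gives $N(f:g)=|am-bn|$. The two numbers agree, which proves the claim in this case. This is the content of the previous corollary \eqref{MC}; we only need that the power pair attains the value, not the minimality assertion.

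\emph{Step 3 (case $am=bn$).} Here $|am-bn|=0$, so the target is a homotopic pair with no graph intersections. Take $\tilde f=\phi_{n,a}$ and $\tilde g=\phi_{m,b}+\varepsilon$ with $\varepsilon=\tfrac{1}{2nm}$, following Theorem~\ref{th:domainS1}. The map $\tilde g$ is homotopic to $\phi_{m,b}$ through the rotations $\phi_{m,b}+s\varepsilon$, $s\in[0,1]$, each of which is $m$-valued. Theorem~\ref{th:domainS1} gives $\Coin_X(\tilde f:\tilde g)=\emptyset$. By Proposition~\ref{MCXleMCXY} there is a graph intersection point over every domain coincidence, so $\Coin_X(\tilde f:\tilde g)=\emptyset$ forces $\Coin_{X\times Y}(\tilde f:\tilde g)=\emptyset$. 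Therefore $\#\Coin_{X\times Y}(\tilde f:\tilde g)=0=N(f:g)$.

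\emph{Main obstacle.} The delicate point is that Theorem~\ref{main} was argued in detail only under $\GCD(n,|a|)=\GCD(m,|b|)=1$, where the loop correspondence of Lemma~\ref{lemma:homopowerloop} applies. For the general case I would rely on the statement of Theorem~\ref{main} as given. If that needed justification, I would use the graph splitting of Lemma~\ref{lemma:graphsplit} together with the counting identity of Lemma~\ref{lemma:splitnumber}. These decompose the graphs into components with coprime data, so the intersection indices of all classes are $\pm1$ with a common sign and the classes add up to $|am-bn|$. Given that, the corollary reduces to the explicit counts in Steps 2 and 3.
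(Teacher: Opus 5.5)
Your proof is correct and is essentially the paper's argument: the power pair realizes $|am-bn|=N(f:g)$ by Theorems~\ref{th:numberofcoinxy} and~\ref{main}, and your separate treatment of $am=bn$ via the rotated pair of Theorem~\ref{th:domainS1} is actually necessary, since in that case the unperturbed power pair has infinitely many graph intersections (a point the equality~\eqref{MC} glosses over). One small fix: $\Coin_X(\tilde f:\tilde g)=\emptyset$ forces $\Coin_{X\times Y}(\tilde f:\tilde g)=\emptyset$ because each graph intersection point $(x,y)$ projects to a domain coincidence $x$, whereas the direction you quote from Proposition~\ref{MCXleMCXY} (a graph intersection over every domain coincidence) is the converse and does not give this.
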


The following question remains open and deserves further investigation.

\begin{problem}\label{problem1}
Let $am \neq bn$ and let $(f,g)$ be an $(n,m)$-valued pair of self-maps of $S^1$ with degrees equal to $a, b$ respectively. Does the following equality hold:
\begin{equation*}
    \hat N(f:g) = \vert am - bn \vert?
\end{equation*}
\end{problem}

\noindent Equivalently the above problem can be expressed in the following way.

\begin{problem}\label{problem2}
Let $am \neq bn$ and let $\phi_{n,a}, \phi_{m,b}$ be two power maps. Is every graph intersection class of these maps geometrically essential? I.e. does the following equality hold:
\begin{equation*}
    \hat N(f:g) = N(f:g),
\end{equation*}
for every $(n,m)$-valued pair of self-maps $(f,g) \colon S^1 \to S^1$?

\end{problem}

To give a method to estimate the minimal number of domain coincidence points also remains a open problem.

\begin{problem}
Let $am \neq bn$ and let $(f,g)$ be an $(n,m)$-valued pair of self-maps of $S^1$ with degrees equal to $a, b$ respectively. In general we will have 
\[ \MC_X(f:g) \leqslant \MC_{X\times Y}(f:g), \]
and these two quantities are often different, as in Figure \ref{introfig}. Are there techniques which can calculate $\MC_X(f:g)$?
\end{problem}

\bibliographystyle{amsplain}

\end{document}